\newtheorem{dfn}{Definition}[section]
\newtheorem{lem}[dfn]{Lemma}
\newtheorem{thm}[dfn]{Theorem}
\newtheorem{cor}[dfn]{Corollary}
\newtheorem{rem}[dfn]{Remark}
\newcommand{\bff}{{\bold f}}
\newcommand{\bfg}{{\bold g}}
\newcommand{\dv}{{\rm div}\,}
\newcommand{\BR}{{\Bbb R}}
\newcommand{\BC}{{\Bbb C}}
\newcommand{\BN}{{\Bbb N}}
\newcommand{\tf}{\tilde{f}}
\newcommand{\tF}{\tilde{F}}
\renewcommand{\th}{\tilde{h}}
\newcommand{\tH}{\tilde{H}}
\newcommand{\barh}{\bar{h}}
\newcommand{\barH}{\bar{H}}
\newcommand{\tA}{\tilde{A}}
\newcommand{\hu}{\hat{u}}
\newcommand{\hv}{\hat{v}}
\newcommand{\hh}{\hat{h}}
\newcommand{\hpi}{\hat{\pi}}
\newcommand{\hW}{\hat{W}}
\newcommand{\CA}{{\mathcal A}}
\newcommand{\CD}{{\mathcal D}}
\newcommand{\CF}{{\mathcal F}}
\newcommand{\CK}{{\mathcal K}}
\newcommand{\CL}{{\mathcal L}}
\newcommand{\CM}{{\mathcal M}}
\newcommand{\CR}{{\mathcal R}}
\newcommand{\CS}{{\mathcal S}}
\newcommand{\CT}{{\mathcal T}}
\newcommand{\CH}{{\mathcal H}}
\newcommand{\pd}{\partial}
\newcommand{\HS}{{\BR^N_+}}
\newcommand{\bHS}{{\BR^N_0}}
\newcommand{\dBR}{{\dot{\BR}}}
\newcommand{\eps}{\varepsilon}
\newcommand{\loc}{\mathrm{loc}}
\renewcommand\Re{\operatorname{Re}}
\renewcommand\Im{\operatorname{Im}}
\begin{document}

\title{Maximal $L_p$-$L_q$ regularity for the Stokes equations with various boundary conditions in the half space}

\author{Naoto Kajiwara \footnote{Applied Physics Course, Department of Electrical, Electronic and Computer Engineering, Gifu University, Yanagido 1-1, Gifu, Gifu 501-1193, JAPAN. E-mail: kajiwara@gifu-u.ac.jp}}

\date{}

\maketitle


\begin{abstract}
We prove resolvent $L_p$ estimates and maximal $L_p$-$L_q$ regularity estimates for the Stokes equations with Dirichlet, Neumann and Robin boundary conditions in the half space. 
Each solution is constructed by a Fourier multiplier of $x'$-direction and an integral of $x_N$-direction. 
We decompose the solution such that the symbols of the Fourier multipliers are bounded and holomorphic. 
We see that the operator norms are dominated by a homogeneous function of order $-1$ for $x_N$-direction.
The basis are Weis's operator-valued Fourier multiplier theorem and a boundedness of a kernel operator. 
We give a new simple approach to get maximal regularity in the half space. 
\end{abstract}


\begin{flushleft}
\textbf{Keywords} : resolvent estimate, maximal regularity, Stokes equations. 
\end{flushleft}




\section{Introduction}

This paper is concerned with resolvent $L_p$ estimates and maximal $L_p$-$L_q$ regularity for the Stokes equations with three types of boundary conditions in the half-space with $1<p,q<\infty$. 
The boundary conditions are Dirichlet(so called non-slip), Neumann and Robin(so called Navier-slip). 
The resolvent estimate is used for the generation of analytic semigroups, and the maximal regularity is used to solve quasi-linear evolution equations such as free boundary problems of non-linear Navier--Stokes equations. 
The standard way to solve the free boundary problems is to transform of a fixed domain. 
The typical methods are known as Lagrangian transform or Hanzawa transform. 
Then the equations on a fixed domain often become quasi-linear equations. 
The maximal regularity for linearized equations is useful for this solvability. 
Let $\Omega\subset \BR^N$ be a domain with three disjoint boundaries $\Gamma_D, \Gamma_N$ and $\Gamma_R$, where $N\ge 2$. 
We allow that one or two of them are empty. 
We keep in mind the following transformed linearized Stokes problem; 
\begin{equation*}\label{Stokes}\left\{\begin{aligned}
\pd_t u -  \Delta u + \nabla \pi = f  &\quad \text{in}~\Omega\times (0, \infty), \\
\dv u = g &\quad \text{in}~\Omega\times (0, \infty), \\
u=h_D&\quad \text{on}~\Gamma_D\times (0,\infty), \\
(D(u)-\pi I) n = h_N&\quad \text{on}~\Gamma_N\times (0,\infty), \\
\alpha u + \beta ( D(u) n - \langle D(u) n, n \rangle n ) = h_R, \quad \langle u, n\rangle =h_{Rn}&\quad \text{on}~\Gamma_R\times (0,\infty), \\
u|_{t=0}= u_0&\quad \text{in}~\Omega.\\
\end{aligned}\right.
\end{equation*}
Here unknowns are the velocity $u={}^t(u_1, \ldots, u_N)$ and the pressure $\pi$, while $f, g, h_D, h_N, h_R, h_{Rn}$ and $u_0$ are given functions, $n$ denotes the unit normal outward vector, $D(u)$ and $I$ are $N$ times $N$ matrices whose $(j,k)$ components $D(u)_{jk}$ and $\delta_{jk}$ are given by 
\[ D(u)_{jk}=\frac{\pd u_j}{\pd x_k} + \frac{\pd u_k}{\pd x_j}, \quad \delta_{jk}=\begin{cases}1~(j=k), \\ 0~(j\neq k), \end{cases} \]
and $\alpha\ge0, \beta>0$. 
Note that the end-point case $(\alpha, \beta)=(1,0)$ in Robin boundary condition implies Dirichlet boundary condition. 
The inner product $\langle \cdot, \cdot \rangle$ is the standard one in $\BR^N$. 
For the sake of simplicity, we set the viscosity coefficient and density equal $1$. 
Not only this non-stationary Stokes equations but also the following generalized resolvent problem are analyzed; 
\begin{equation*}\label{resolvent Stokes}\left\{\begin{aligned}
\lambda u -  \Delta u + \nabla \pi = f  &\quad \text{in}~\Omega, \\
\dv u = g &\quad \text{in}~\Omega, \\
u=h_D&\quad \text{on}~\Gamma_D, \\
(D(u)-\pi I) n = h_N&\quad \text{on}~\Gamma_N, \\
\alpha u + \beta ( D(u) n - \langle D(u) n, n \rangle n ) = h_R, \quad \langle u, n\rangle =h_{Rn}&\quad \text{on}~\Gamma_R. 
\end{aligned}\right.
\end{equation*}
This resolvent equation is derived from Laplace transform of the equation \eqref{Stokes}. 

In this paper we do not treat the domain with curved boundaries so that the domain is the half-space. 
However the domain will be allowed more general domains like a bounded domain by cut-off techniques and localizations. 
We do not use such procedures since that is common and the analysis of the half space is the most important steps. 
Instead of them, we consider the problem with non-homogeneous data, which is a key to treat non-linear problems. 
After a reduction to $f=g=0$, we consider the solution operator from boundary data $h$ to the solutions $u$ and $\pi$. 
Although these solutions are given by a Fourier multiplier of $h(x',0)$ which is independent of $x_N$-variable, we shall use $h(x', x_N)$ by using an integral. 
We decompose the symbols of the solution operators into new symbols and new independent variables. 
Since the new symbol of the Fourier multiplier operator are bounded and holomorphic, we are able to use Fourier multiplier theorem with the connection to Mikhlin conditions. 
The theorem is used only for the whole space, but we are able to use the theorem in the tangential direction for the half-space $\HS=\BR^{N-1}\times (0,\infty)$. 
We confirm that the operator norm is dominated by a homogeneous function of order $-1$ in $x_N$. 
This kernel operator is known as a bounded operator in $L_p$ spaces. 
Therefore this shows resolvent estimates. 
Note that the new decomposed independent variables become suitable right-hand side of the generalized resolvent estimates. 
Moreover we see that this symbols are $\CR$-bounded in $\lambda$. 
Therefore we are able to use Weis's operator-valued Fourier multiplier theorem to get maximal $L_p$-$L_q$ regularity. 
There are a lot of technical ideas to get the maximal regularity in the half-space. 
However we emphasize that we do not need such elaborate calculations. 
We only use the classical tools developed by Shibata et al. \cite{KS12, S20, SS03, SS08, SS12} and a book \cite{PS16} covering various subjects to harmonic analysis and the maximal regularity. 

Let us review a brief history about maximal regularity for the Stokes equations. 
The most classical result was given by Solonnikov \cite{S77} who proved maximal $L_p$-$L_p$ regularity for the Stokes equations by using Potential theory. 
Improved results to anisotropic $L_p$-$L_q$ type were given by Giga et al. in \cite{G85, GS91}. 
They used bounded imaginary powers of Stokes operators and refined Dore--Venni theorem for both bounded and exterior domains. 
In 2001, a sufficient condition for $L_p(\BR, X)$-boundedness of Fourier multiplier operators was constructed by Weis \cite{W01} in terms of $\CR$-bounded of the symbols under $X$ is $\mathcal{H}\mathcal{T}$ space. 
This breakthrough led a lot of results for the maximal regularity. 
For example, see the monographs by Denk--Hieber--Pr\"uss \cite{DHP03} and Kunstmann--Weis \cite{KW04}. 
These were applied to the elliptic operators. 
Weis's theorem was applied not only elliptic operators but also Stokes operator. 
It has shown by Geissert, Hech, Hieber and Sawada \cite{GHHS12} that the existence of the Helmholtz decomposition implies the analyticity and maximal $L_p$-$L_q$ regularity for the Stokes operators. 
Moreover we note that Farwig, Kozono and Sohr \cite{FKS05, FKS09} proved maximal $L_p$-$\tilde{L}_q$ regularity for general domains. 
A general explanation for the Stokes equations was given by \cite{HS18}. 
We heavily depend on the results by Shibata et al. \cite{KS12, SS12}. 
It was also important for them to use the theorem due to Weis, where the methods seemed systematic ways in the sense that they got the resolvent estimate and the maximal regularity at the same time. 
Since then, there are a lot of results, e.g. for model problems with Neumann or free boundary conditions \cite{SS08, SS09, SS12}, Robin conditions \cite{SS07, S07}, two-phase problems \cite{SS11}. 
For the case of general domains, see \cite{S16, S14, S20}. 
On the other hand, our method will show easier than them since the basis is bounded and holomorphic although essential ideas are similar. 
At last, see \cite{PS16} for the comprehensive results about analyticity of semigroups, vector-valued harmonic analysis, maximal regularity, parabolic and Stokes equations and its applications to the free boundary problems. 
Almost all of our main theorems have already proved before, but we give a new simple approach to get resolvent estimates and maximal regularity estimates. 
We treat Dirichlet, Neumann and Robin boundary conditions. 
We remark that the normal component of Dirichlet and Robin boundary conditions are inhomogeneous, which is a generalization of previous works. 
We introduce a little bit small function space for $h_N$. 
Similar function space were considered in the paper \cite{SS11} for two-phase Stokes problems. 
Recently, resolvent estimates and maximal regularity estimate for two-phase Stokes problem has shown in \cite{K} by the method used in this paper. 

The structure of the paper is as follows. 
First we introduce some notations and state our main theorems in section \ref{main}. 
Then, in section \ref{preliminaries}, we prepare some known definitions and theorems. 
Since the equations are inhomogeneous, we transform the equation into homogeneous except for boundary data $h$. 
This is as usual and is stated in section \ref{reduction}. 
In section \ref{formula}, we solve the equations in the half space by partial Fourier transforms. 
Three types of boundary conditions are treated similarly. 
The solution formula is Fourier multiplier type with the symbols of sum of heat part $e^{-\sqrt{\lambda + |\xi'|^2} x_N}$ and Stokes part $\CM_\lambda(\xi', x_N)$ which is defined later. 
From so called Volevich's trick, the solutions are given by an integral form whose integrands are Fourier multiplier operators which act $h$ and $\pd_N h$. 
In the last section \ref{proof}, we prove the main theorem after giving a sufficient condition to get estimates. 
We decompose the symbols while paying attention to the desired estimates. 
Resolvent estimate is straightforward from the theorem prepared in section \ref{preliminaries} and the estimates of $e^{-\sqrt{\lambda + |\xi'|^2} x_N}$ and $\CM_\lambda(\xi', x_N)$. 
Maximal regularity estimates are also same as resolvent estimates since the symbols are $\CR$-bounded in $\lambda$-variables. 
Proofs of some lemmas are written in Appendices. 

\section{Main theorem}\label{main}
We formulate the resolvent and the non-stationary problems in the half-space. 
Let $\HS$ and $\bHS$ be the half-space and its flat boundary and let $Q_+$ and $Q_0$ be the corresponding time-space domain; 
\begin{alignat*}{3}
&\HS:=\{x=(x_1, \ldots, x_N)\in\BR^N\mid x_N>0\}, &\quad \bHS&:=\{x=(x',0)=(x_1, \ldots,x_{N-1}, 0)\in\BR^N\}, \\
&Q_+:=\HS\times (0,\infty), &\quad Q_0&:=\bHS\times (0,\infty). 
\end{alignat*}

The resolvent problem is as follows; 
\begin{equation}\label{resolvent Stokes}\left\{\begin{aligned}
\lambda u -  \Delta u + \nabla \pi = f  &\quad \text{in}~\HS, \\
\dv u = g&\quad \text{in}~\HS, 
\end{aligned}\right.
\end{equation}
with one of the following boundary conditions on $\bHS$; 
\begin{alignat*}{3}
&\text{(Dirichlet)}~u = h, \\
&\text{(Neumann)}\begin{cases}-(\pd_N u_j + \pd_j u_N) = h_j\quad(j=1,\ldots, N-1),\\ -(2 \pd_N u_N - \pi) = h_N,\end{cases}\\
&\text{(Robin)}\begin{cases} \alpha u_j - \beta \pd_N u_j = h_j \quad(j=1,\ldots, N-1),\\u_N = h_N. \\\end{cases}
\end{alignat*}

The non-stationary problem is as follows; 
\begin{equation}\label{non-stationary Stokes}\left\{\begin{aligned}
\pd_t U -  \Delta U + \nabla \Pi = F  &\quad \text{in}~Q_+, \\
\dv U = G&\quad \text{in}~Q_+, 
\end{aligned}\right.
\end{equation}
with initial data $U|_{t=0}=U_0$ and with one of the following boundary conditions on $Q_0$; 
\begin{alignat*}{3}
&\text{(Dirichlet)}~U= H, \\
&\text{(Neumann)}\begin{cases}-(\pd_N U_j + \pd_j U_N) = H_j\quad(j=1,\ldots, N-1),\\ -(2 \pd_N U_N - \Pi) = H_N,\end{cases}\\
&\text{(Robin)}\begin{cases} \alpha U_j - \beta \pd_N U_j = H_j \quad(j=1,\ldots, N-1),\\U_N = H_N. \\\end{cases}
\end{alignat*}

Given a domain $D$, Lebesgue and Sobolev spaces are denoted by $L_q(D)$ and $W^m_q(D)$ with norms $\|\cdot\|_{L_q(D)}$ and $\|\cdot\|_{W^m_q(D)}$. 
Same manner is applied in the $X$-valued spaces $L_p(\BR, X)$ and $W^m_p(\BR, X)$. 
For a scalar function $f$ and $N$-vector $\bff=(f_1, \ldots, f_N)$, we use the following symbols; 
\begin{alignat*}{3}
&\nabla f = (\pd_1 f, \ldots, \pd_N f), &\quad &\nabla^2 f = (\pd_i\pd_j f \mid i,j=1,\ldots, N), \\
&\nabla \bff = (\pd_i f_j \mid i,j=1, \ldots, N), &\quad &\nabla^2 \bff = (\pd_i \pd_j f_k \mid i,j,k= 1, \ldots, N). 
\end{alignat*}
Even though $\bfg=(g_1, \ldots, g_{\tilde{N}}) \in X^{\tilde{N}}$ for some $\tilde{N}$, we use the notations $\bfg\in X$ and $\|\bfg\|_X$ as $\sum_{j=1}^{\tilde{N}}\|g_j\|_X$ for simplicity. 
Set 
\begin{align*}
\hW^1_q(D)= \{\pi\in L_{q,\loc}(D)\mid\nabla \pi\in L_q(D)\}, \quad \hW^1_{q,0}(D)=\{\pi\in \hW^1_q(D)\mid \pi|_{\pd D}=0\}
\end{align*}
and let $\hW^{-1}_q(D)$  denote the dual space of $\hW^1_{q',0}(D)$, where $1/q+1/q'=1$. 
For $\pi\in \hW^{-1}_q (D)\cap L_q(D)$, we have 
\[\|\pi\|_{\hW^{-1}_q(D)} = \sup\left\{ \left|\int_D \pi \phi dx\right| \mid \phi \in \hW^1_{q',0}(D), \|\nabla \phi\|_{L_{q'}(D)}=1\right\}. \]

Let $\CF$ and $\CF^{-1}$ denote the Fourier transform and its inverse; 
\begin{align*}
\CF[f](\xi):=\int_{\BR^n} e^{-i x\cdot \xi} f(x) dx, \quad \CF^{-1} [g](x):=\frac{1}{(2\pi)^n}\int_{\BR^n} e^{i x\cdot \xi} g(\xi) d\xi.
\end{align*}

Although we consider time interval $\BR_+$, we regard functions on $\BR$ to use Fourier transform. 
To do so and to consider Laplace transforms as Fourier transforms, we introduce some function spaces; 
\begin{align*}
L_{p,0,\gamma_0}(\BR, X)&:=\{f:\BR\to X \mid e^{-\gamma_0 t}f(t)\in L_p(\BR, X),~f(t)=0~\text{for}~t<0\}, \\
W^m_{p,0,\gamma_0}(\BR, X)&:=\{f\in L_{p,0,\gamma_0}(\BR, X) \mid e^{-\gamma_0 t}\pd_t^j f(t) \in L_p(\BR, X),~j=1,\ldots, m\}, \\
L_{p,0}(\BR, X)&:=L_{p,0,0}(\BR; X),\quad W^m_{p,0}(\BR, X):=W^m_{p,0,0}(\BR; X)
\end{align*}
for some $\gamma_0\ge0$. 
Let $\CL$ and $\CL^{-1}_\lambda$ denote two-sided Laplace transform and its inverse, defined as 
\begin{align*}
\CL[f](\lambda) = \int_{-\infty}^\infty e^{-\lambda t} f(t) dt, \quad \CL^{-1}_\lambda[g](t) = \frac{1}{2\pi}\int_{-\infty}^\infty e^{\lambda t} g(\lambda) d\tau, 
\end{align*}
where $\lambda = \gamma + i\tau \in \BC$. 
Given $s\in \BR$ and $X$-valued function $f$, we use the following Bessel potential spaces to treat fractional orders; 
\begin{align*}
H^s_{p,0,\gamma_0}(\BR, X) &:= \{f:\BR\to X \mid \Lambda^s_\gamma f := \CL^{-1}_\lambda[|\lambda|^s \CL[f](\lambda)](t) \in L_{p, 0, \gamma}(\BR, X)~\text{for~any}~\gamma\ge \gamma_0\}, \\
H^s_{p,0}(\BR, X)&:=H^s_{p,0,0}(\BR, X), \\
H^s_{p,0}(0,T, X)&:= \{f\mid {\rm there~exists}~F\in H^s_{p,0}(\BR, X)~{\rm such~that}~F|_{(0,T)}=f\},~0<T<\infty, \\
\|\Lambda_\gamma^s f\|_{L_p(0,T, X)}&:=\inf\{\|\Lambda_\gamma^s F\|_{L_p(\BR, X)}\mid F\in H^s_{p,0}(\BR, X)~{\rm such~that}~F|_{(0,T)}=f\}, f\in H^s_{p,0}(0,T, X), 
\end{align*}
where $F|_{(0,T)}$ means a restriction to $(0,T)$. 
Since we need to take care of the $N$-th component of the velocity for Dirichlet and Robin boundary conditions, we introduce the following function spaces; 
\begin{align*}
E_q(\HS)&:=\{h_N \in W^2_q(\HS)\mid |\nabla'|^{-1}\pd_N h_N := \CF_{\xi'}^{-1} |\xi'|^{-1} \CF_{x'} (\pd_N h_N)(x', x_N) \in L_q(\HS)\}. 
\end{align*}

Let $\Sigma_\eps:=\{\lambda\in\BC\setminus\{0\}\mid |\arg\lambda|<\pi-\eps\}$. 
Throughout this paper, the index of $h=(h_j)$ and $H=(H_j)$ runs from $j=1$ to $N$ when Dirichlet and Neumann boundaries, and we use the notations $h = (h', h_N)$ and $H=(H',H_N)$ when Robin boundary. 
We are ready to state our main results. 

\begin{thm}\label{resolventthm}
Let $0<\eps<\pi/2$ and $1<q<\infty$. 
Then for any $\lambda \in \Sigma_\eps$, 
\[ f\in L_q(\HS),\quad g\in \hW^{-1}_q(\HS)\cap W^1_q(\HS), \quad h\in \begin{cases} W^2_q(\HS),~h_N\in E_q(\HS)~\rm{Dirichlet}, \\ W^1_q(\HS)~\rm{Neumann}, \\W^1_q(\HS) \times E_q(\HS)~\rm{Robin}, 
\end{cases}  \]
problem \eqref{resolvent Stokes} admits a unique solution $(u, \pi) \in W^2_q(\HS) \times \hW^1_q(\HS)$ with the resolvent estimate; 
\begin{align*}
&\|(|\lambda|u, |\lambda|^{1/2}\nabla u, \nabla^2 u, \nabla \pi)\|_{L_q(\HS)} \\
\le& \begin{cases}
C\{\|(f, |\lambda|^{1/2} g, \nabla g, |\lambda|h, \nabla^2 h, |\lambda||\nabla'|^{-1} \pd_N h_N) \|_{L_q(\HS)} + |\lambda| \|g\|_{\hW^{-1}_q(\HS)}\}~\rm{Dirichlet},\\
C\{\|(f, |\lambda|^{1/2} g, \nabla g, |\lambda|^{1/2} h, \nabla h) \|_{L_q(\HS)} + |\lambda| \|g\|_{\hW^{-1}_q(\HS)}\}~\rm{Neumann}, \\
C'\{\|(f, |\lambda|^{1/2} g, \nabla g, |\lambda|^{1/2} h', \nabla h', |\lambda|h_N, \nabla^2 h_N, |\lambda||\nabla'|^{-1} \pd_N h_N) \|_{L_q(\HS)} \\
\qquad \qquad + \frac{\alpha}{|\lambda|^{1/2}} \|(f, |\lambda|^{1/2}g, \nabla g)\|_{L_q(\HS)} + |\lambda| \|g\|_{\hW^{-1}_q(\HS)}\}~\rm{Robin}, \end{cases}
\end{align*}
for some constants $C=C_{N, q, \eps}$ and $C'=C'_{N, q, \eps, \alpha, \beta}$. 
\end{thm}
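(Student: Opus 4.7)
The plan is to follow the roadmap laid out in the introduction: first reduce the inhomogeneous system to a boundary-value problem with $f=0$ and $g=0$, then solve that reduced problem explicitly by a partial Fourier transform in the tangential variables $x'$, and finally estimate each piece of the resulting solution formula by combining an operator-valued Fourier multiplier theorem in $x'$ with a one-dimensional kernel estimate in $x_N$. For the reduction step I would solve the divergence equation $\dv w = g$ by constructing $w$ from $f$ and $g$ via whole-space heat-semigroup / Bogovski\v\i-type arguments on $\HS$, reflecting as needed, so that the residual data $h$ absorbs the old boundary traces and the estimates pick up the $\|f\|_{L_q}$, $\|g\|_{W^1_q}$, $|\lambda|\,\|g\|_{\hat{W}^{-1}_q}$ and $|\lambda|^{1/2}\|g\|_{L_q}$ norms on the right-hand side of the target inequality. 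This step is treated in the indicated section \ref{reduction}, so I would quote it and focus on the boundary problem.

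For the reduced problem, applying $\CF_{x'}$ turns each boundary condition into a scalar/vector ODE system in $x_N$ for $\hat u(\xi',x_N)$ and $\hat \pi(\xi',x_N)$ with parameter $(\lambda,\xi')\in\Sigma_\eps\times\BR^{N-1}$. The bounded solutions are standard combinations of $e^{-\sqrt{\lambda+|\xi'|^2}\,x_N}$ (``heat part'') and $e^{-|\xi'|x_N}$ inside $\CM_\lambda(\xi',x_N)$ (``Stokes part''), with coefficients determined by Lopatinskii-type determinants that are non-vanishing on $\Sigma_\eps$ for each of the three boundary types. Since $h$ is defined on all of $\HS$ rather than only on $\bHS$, I would apply Volevich's trick to rewrite the trace $\hat h(\xi',0)=-\int_0^\infty \partial_N \hat h(\xi',y_N)\,dy_N$ (and similarly for auxiliary traces), turning the solution into an integral over $y_N\in(0,\infty)$ of Fourier multipliers acting on $h(\cdot,y_N)$ and $\partial_N h(\cdot,y_N)$. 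This way the right-hand side in the estimate lives on the half-space and the formula plays well with the desired norms $\|\nabla^2 h\|_{L_q}$, $\||\lambda|h\|_{L_q}$, etc.

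Next I would carefully split the resulting symbols into pieces, each of the schematic form $m(\lambda,\xi')\cdot k(x_N+y_N,\xi',\lambda)$, where $m$ is a bounded holomorphic multiplier on $\Sigma_\eps\times(\BR^{N-1}\setminus\{0\})$ that satisfies the Mikhlin/$\CR$-boundedness conditions needed to invoke the operator-valued Fourier multiplier theorem of Weis, and $k$ is (after distributing appropriate factors of $|\xi'|$, $\sqrt{\lambda+|\xi'|^2}$, $\lambda$) an exponential like $e^{-a(\xi',\lambda)(x_N+y_N)}$ whose tangential $L_q$ operator norm is dominated by a constant times $(x_N+y_N)^{-1}$. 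The one-dimensional kernel $(x_N+y_N)^{-1}$ is the kernel of a bounded operator on $L_q((0,\infty))$ (a variant of Hardy's inequality / Hilbert's double-integral bound), so Minkowski and Fubini deliver the $L_q(\HS)$ estimate. Each term $|\lambda| u$, $|\lambda|^{1/2}\nabla u$, $\nabla^2 u$, $\nabla\pi$ produces its own symbol; the scaling of $m$ with $\lambda$ and $\xi'$ is matched against a corresponding derivative of $h$ so that the right-hand side is exactly the one stated (in particular the awkward $|\lambda||\nabla'|^{-1}\partial_N h_N$ term comes from the normal trace $h_N$ handled via $|\xi'|^{-1}\partial_N h_N$).

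The main obstacle I expect is writing down the decomposition cleanly enough that every symbol really is holomorphic and bounded with the right homogeneity, especially where the Lopatinskii determinant mixes $|\xi'|$ and $\sqrt{\lambda+|\xi'|^2}$ in denominators (Neumann and Robin cases), and where the Robin parameter $\alpha\ge 0$ has to produce the extra $\alpha|\lambda|^{-1/2}$ factor seen on the right. Once the symbols are controlled, Weis's theorem and the Hilbert-kernel bound assemble the estimate mechanically. Finally, uniqueness in $W^2_q(\HS)\times\hat{W}^1_q(\HS)$ follows by a standard duality argument against the analogous dual Stokes problem with test data in $L_{q'}$, which is solvable by the very existence result just proved, so any solution with $f=g=h=0$ must vanish. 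This completes the proof of Theorem~\ref{resolventthm} modulo the explicit verification of each symbol class, which I would defer to a technical lemma and to the appendices alluded to at the end of the introduction.
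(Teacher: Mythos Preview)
Your proposal is correct and follows essentially the same approach as the paper: the two-stage reduction to $f=g=0$ (first via a divergence problem, then via the whole-space Stokes resolvent with odd/even reflections), the explicit ODE solution after partial Fourier transform, Volevich's trick, the decomposition into bounded holomorphic symbols times exponentials dominated by $(x_N+y_N)^{-1}$, and the combination of Weis's multiplier theorem with the Hilbert-kernel bound on $L_q(0,\infty)$ are exactly the ingredients the paper uses, and your identification of the $|\lambda||\nabla'|^{-1}\pd_N h_N$ term and the Robin $\alpha|\lambda|^{-1/2}$ correction matches their treatment. The only cosmetic difference is that the paper solves $\dv v=g$ via an explicit Fourier formula with an even extension rather than a Bogovski\u\i\ operator, but the resulting estimates are the same.
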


\begin{rem}
For Robin boundary condition, the right-hand side of theorem \ref{resolventthm} can be replaced by 
\begin{align*}
&C_{N, q, \alpha, \beta, \eps, \delta} \{\|(f, |\lambda|^{1/2} g, \nabla g, |\lambda|^{1/2} h', \nabla h', |\lambda|h_N, \nabla^2 h_N, |\lambda||\nabla'|^{-1} \pd_N h_N) \|_{L_q(\HS)} + |\lambda| \|g\|_{\hW^{-1}_q(\HS)}\}, 
\end{align*}
provided $\lambda\in \Sigma_\eps$ with $|\lambda|\ge\delta>0$. 
\end{rem}

\begin{thm}\label{maxregthm}
Let $1<p, q<\infty$ and $\gamma_0>0$, or $\gamma_0\ge0$ when Dirichlet, Neumann or Robin boundary with $\alpha=0$. 
Then for any 
\begin{align*}
F&\in L_{p,0,\gamma_0}(\BR, L_q(\HS)),\quad G \in L_{p,0,\gamma_0}(\BR, W^1_q(\HS))\cap W^1_{p,0,\gamma_0}(\BR, \hW^{-1}_q(\HS)), \\
H&\in \begin{cases}
W^1_{p,0,\gamma_0}(\BR, L_q(\HS)) \cap L_{p,0,\gamma_0}(\BR, W^2_q(\HS)), H_N\in H^1_{p,0,\gamma_0}(\BR, E_q(\HS))~\rm{Dirichlet}, \\
H^{1/2}_{p,0,\gamma_0}(\BR, L_q(\HS))\cap L_{p,0,\gamma_0}(\BR, W^1_q(\HS))~\rm{Neumann},\\
\left(H^{1/2}_{p,0,\gamma_0}(\BR, L_q(\HS))\cap L_{p,0,\gamma_0}(\BR, W^1_q(\HS))\right)\\
\hspace{15mm}\times \left(W^1_{p,0,\gamma_0}(\BR, L_q(\HS)) \cap L_{p,0,\gamma_0}(\BR, W^2_q(\HS)) \cap H^1_{p,0, \gamma_0}(\BR, E_q(\HS))\right)~\rm{Robin},
\end{cases}
\end{align*}
problem \eqref{non-stationary Stokes} with $U_0=0$ admits a unique solution $(U, \Pi)$ such that 
\begin{align*}
U&\in W^1_{p,0,\gamma_0}(\BR, L_q(\HS)) \cap L_{p,0,\gamma_0}(\BR, W^2_q(\HS)), \\
\Pi &\in L_{p,0,\gamma_0}(\BR, \hW^1_q(\HS))
\end{align*} 
with the maximal $L_p$-$L_q$ regularity; 
\begin{align*}
&\|e^{-\gamma t}(\pd_t U, \gamma U, \Lambda^{1/2}_\gamma \nabla U, \nabla^2 U, \nabla \Pi)\|_{L_p(\BR, L_q(\HS))} \\
\le& \begin{cases}
C\{\|e^{-\gamma t} (F, \Lambda^{1/2}_\gamma G, \nabla G, \pd_t H, \nabla^2 H, \pd_t(|\nabla'|^{-1}\pd_N H_N) \|_{L_p(\BR, L_q(\HS))} \\
\hspace{15mm}+ \|e^{-\gamma t} (\pd_t G, \gamma G)\|_{L_p(\BR, \hW^{-1}_q(\HS))}\}~\rm{Dirichlet},\\
C\{\|e^{-\gamma t} (F, \Lambda^{1/2}_\gamma G, \nabla G, \Lambda^{1/2}_\gamma H, \nabla H) \|_{L_p(\BR, L_q(\HS))} + \|e^{-\gamma t} (\pd_t G, \gamma G)\|_{L_p(\BR, \hW^{-1}_q(\HS))} \}~\rm{Neumann}, \\
C'\{\|e^{-\gamma t} (F, \Lambda^{1/2}_\gamma G, \nabla G, \Lambda^{1/2}_\gamma H', \nabla H', \pd_t H_N, \nabla^2 H_N, \pd_t(|\nabla'|^{-1}\pd_N H_N) \|_{L_p(\BR, L_q(\HS))} \\
\hspace{15mm} + \|e^{-\gamma t} (\pd_t G, \gamma G)\|_{L_p(\BR, \hW^{-1}_q(\HS))}\}~\rm{Robin},
\end{cases}
\end{align*}
for any $\gamma \ge \gamma_0$ with some constants $C=C_{N, p, q, \gamma_0}$ and $C'=C'_{N, p, q,\gamma_0, \alpha, \beta}$. 
\end{thm}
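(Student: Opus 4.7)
The plan is to deduce Theorem \ref{maxregthm} from the resolvent estimate of Theorem \ref{resolventthm} by means of Weis's operator-valued Fourier multiplier theorem, using that $L_q(\HS)$ is a UMD space for $1<q<\infty$. First I would carry out the reduction of section \ref{reduction}: using $U_0=0$, solve auxiliary problems for the inhomogeneous term $F$ (a whole-space heat problem after extension) and for the inhomogeneous divergence $G$ (using a suitable right inverse of $\dv$ on $\HS$ with the $\hW^{-1}_q$ control built in) to obtain $(U_*,\Pi_*)$ whose maximal $L_p$-$L_q$ norm is controlled by the $F,G$-terms on the right-hand side. Subtracting reduces to the case $F=G=0$, $U_0=0$ with modified boundary data $\tilde H$ whose norm is dominated by the data norms appearing in the statement.

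Next I would take the two-sided Laplace transform in $t$ with parameter $\lambda=\gamma+i\tau$, $\gamma\geq\gamma_0$, which is well defined on $L_{p,0,\gamma_0}(\BR,X)$ since the functions vanish for $t<0$. The transformed system is precisely the resolvent problem \eqref{resolvent Stokes} with $f=g=0$ and boundary datum $\hat H(\cdot,\lambda)=\CL[\tilde H](\lambda)$; from section \ref{formula} its solution is given by Fourier multipliers of $\xi'$, acting on $\hat H$ and $\pd_N\hat H$ and integrated in $x_N$ (Volevich's trick). Denote the resulting solution operators schematically by $u=\CA(\lambda)\hat H$, $\pi=\CB(\lambda)\hat H$.

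The core technical step, and the one I expect to be the main obstacle, is to upgrade the pointwise $\lambda$-dependent resolvent bounds into $\CR$-boundedness of the families
\[
\{\lambda\,\CA(\lambda),\ \Lambda^{1/2}\nabla\CA(\lambda),\ \nabla^2\CA(\lambda),\ \nabla\CB(\lambda)\}_{\lambda\in\Sigma_\eps,\,|\lambda|\geq\gamma_0}
\]
as operators between the appropriate $L_q(\HS)$-based data and solution spaces. The strategy is exactly that of the resolvent proof: decompose the symbols into bounded holomorphic pieces in the tangential variable $\xi'$, control them by Mikhlin-type $\CR$-bounded multiplier theorems in $\xi'$ (see \cite{PS16,DHP03}), and bound the $x_N$-integral factor by a kernel of homogeneous order $-1$ on $(0,\infty)$. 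The $\CR$-boundedness in $\lambda$ follows from the same Mikhlin criterion applied to $\tau\pd_\tau$-derivatives: the symbols are rational in $\lambda$ and $\sqrt{\lambda+|\xi'|^2}$, so $\lambda$-differentiation preserves the structural decomposition used in the resolvent bound and only improves decay, giving uniform estimates on $\{(\tau\pd_\tau)^k\}_{k=0,1}$ of the symbols. For Robin with $\alpha>0$, the extra $\alpha/|\lambda|^{1/2}$ factor forces $\gamma_0>0$ to keep the family bounded; for Dirichlet, Neumann, and Robin with $\alpha=0$, homogeneity in $\lambda$ lets us take $\gamma_0=0$.

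Once the $\CR$-boundedness is in hand, Weis's theorem yields that $\tilde H\mapsto\CF^{-1}[\CA(\gamma+i\tau)\CF[\cdot]]$ is bounded on $L_p(\BR,X)$ for the relevant $X$, uniformly in $\gamma\geq\gamma_0$. Translating back via the correspondences $\pd_t\leftrightarrow\lambda$, $\gamma\leftrightarrow\gamma$, $\Lambda_\gamma^{1/2}\leftrightarrow|\lambda|^{1/2}$, and adding back $(U_*,\Pi_*)$, this produces the maximal $L_p$-$L_q$ regularity with the exponentially weighted norms stated, term by term as in Theorem \ref{resolventthm}. Uniqueness follows by considering $F=G=H=0$, applying the Laplace transform, and invoking uniqueness in Theorem \ref{resolventthm} for almost every $\tau\in\BR$.
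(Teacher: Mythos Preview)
Your proposal is correct and follows the same overall architecture as the paper: reduction to $F=G=0$ via the auxiliary problems of section~\ref{reduction}, Laplace transform to the resolvent problem, explicit solution formulas from section~\ref{formula} with the Volevich trick, then $\CR$-boundedness of the solution operators combined with Weis's theorem, and uniqueness via the resolvent problem.

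The one point where you diverge slightly from the paper is in how the $\CR$-boundedness in $\tau$ is verified. You propose to check the Mikhlin condition by computing $\tau\pd_\tau$-derivatives of the symbols directly, arguing that differentiation preserves the structural form. This is the classical route of \cite{SS12} and it works, but it is exactly the computation the paper is written to avoid. The paper instead observes that the symbols extend holomorphically and boundedly not only in $\xi'\in\tilde\Sigma_\eta^{N-1}$ but simultaneously in $\tau\in\tilde\Sigma_\eta$ (with $\lambda=\gamma+i\tau$), so that Theorem~\ref{H^infty} (Proposition~4.3.10 of \cite{PS16}) delivers the full Lizorkin condition in $(\tau,\xi')\in\dBR^N$ without any derivative computation. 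This is packaged as Theorem~\ref{thm}(ii), and the only price is that the pointwise symbol bounds (Lemmas~\ref{es} and~\ref{es2}) must be established for complex $\xi'\in\tilde\Sigma_\eta^{N-1}$ rather than just real $\xi'$. Your approach is valid but more laborious; the paper's is the advertised simplification.

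One small correction: the restriction $\gamma_0>0$ in the Robin case with $\alpha>0$ does not arise from the boundary solution operators themselves (those are uniformly bounded for all $\gamma\ge0$), but from the reduction step, where the modified boundary datum $\tH_j^R=H_j-\alpha V_j$ requires control of $\|e^{-\gamma t}\Lambda_\gamma^{-1/2}\pd_t G\|$ and $\|e^{-\gamma t}G\|$, and $\Lambda_\gamma^{-1/2}$ is bounded only for $\gamma\ge\gamma_0>0$.
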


Above two theorems are our main theorems. 
We write down other applications. 
We introduce some function spaces and Stokes operator on $L_q(\HS)$; 
\begin{align*}
&J_q(\HS):=\begin{cases}\{u\in L_q(\HS)\mid \dv u = 0~{\rm in}~\HS,~u_N=0~{\rm on}~\BR^N_0\}~{\rm Dirichlet, Robin}, \\
\{u\in L_q(\HS)\mid \dv u = 0~{\rm in}~\HS\}~{\rm Neumann}, 
\end{cases}\\
&G_q(\HS)=\begin{cases}\{\nabla \pi \mid \pi\in \hW^1_q(\HS)\}~{\rm Dirichlet, Robin}, \\
\{\nabla \pi \mid \pi\in W^1_{q,0}(\HS)\}~{\rm Neumann}, 
\end{cases}\\
&D(A_q):=\begin{cases}\{u\in J_q(\HS)\cap W^2_q(\HS)\mid u_j=0,~j=1, \ldots, N-1,~{\rm on}~\BR^N_0\}~{\rm Dirichlet}, \\
\left\{u\in J_q(\HS)\cap W^2_q(\HS)\;\middle|\; \begin{aligned}&\pd_N u_j + \pd_j u_N = 0,~j=1, \ldots, N-1, \\ & \pd_N u_N  = 0\end{aligned}~{\rm on}~\BR^N_0\right\}~{\rm Neumann}, \\
\{u\in J_q(\HS)\cap W^2_q(\HS)\mid \alpha u_j - \beta \pd_N u_j = 0,~j=1, \ldots, N-1, {\rm on}~\BR^N_0\}~{\rm Robin}, 
\end{cases}\\
&A_q u := - P_q \Delta u, 
\end{align*}
where $P_q$ is the continuous projection from $L_q(\HS)$ onto $J_q(\HS)$ along $G_q(\HS)$ corresponding to Helmholtz decomposition $L_q(\HS)=J_q(\HS)\oplus G_q(\HS)$. 
More precisely, see \cite{G11, SS08, S07}. 
Here we did not write $(L_q(\HS))^N$ and  $(W^2_q(\HS))^N$ as mentioned above. 
Theorem \ref{resolventthm} with $g=h=0$ implies the generation of analytic semigroups on $J_q(\HS)$. 
\begin{cor}
Let $1<q<\infty$. 
For Dirichlet, Neumann or Robin boundary conditions, the Stokes operator $-A_q$ generates a bounded analytic semigroup $\{e^{-A_q t}\}_{t\ge0}$ on $J_q(\HS)$. 
\end{cor}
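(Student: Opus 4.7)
The plan is to derive analytic semigroup generation from the resolvent bound of Theorem \ref{resolventthm} specialized to $g = 0$ and $h = 0$. By the standard characterization of sectorial operators, it suffices to prove that for some $\eps \in (0, \pi/2)$ the resolvent set $\rho(-A_q)$ contains $\Sigma_\eps$ (possibly after a shift in the Robin case with $\alpha > 0$) and that $\|\lambda (\lambda + A_q)^{-1}\|_{\mathcal{L}(J_q(\HS))}$ is uniformly bounded on that sector; density of $D(A_q)$ in $J_q(\HS)$ is standard and will be taken for granted.

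Given $f \in J_q(\HS)$ and $\lambda \in \Sigma_\eps$, I first apply Theorem \ref{resolventthm} with $g = 0$ and $h = 0$ to produce the unique $(u, \pi) \in W^2_q(\HS) \times \hW^1_q(\HS)$ satisfying $\lambda u - \Delta u + \nabla \pi = f$ and $\dv u = 0$, together with the prescribed boundary condition on $\bHS$. Next I verify that $u \in D(A_q)$: the Dirichlet case is immediate from $u|_{\bHS} = 0$; the Robin case from $u_N|_{\bHS} = 0$ and $\alpha u_j - \beta \pd_N u_j = 0$; the Neumann case is more delicate, and one shows that with $f \in J_q(\HS)$ the constructed pressure satisfies $\pi|_{\bHS} = 0$, whence the boundary identity $2\pd_N u_N = \pi$ forces $\pd_N u_N = 0$ on $\bHS$ and places $\nabla \pi$ in $G_q(\HS)$.

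Applying the Helmholtz projection $P_q$ to the momentum equation annihilates $\nabla \pi$ and yields $(\lambda + A_q) u = P_q f = f$. Injectivity of $\lambda + A_q$ on $D(A_q)$ follows from the uniqueness part of Theorem \ref{resolventthm}: any $v \in D(A_q)$ with $(\lambda + A_q)v = 0$ admits an associated $\pi_v \in \hW^1_q(\HS)$ making $(v, \pi_v)$ a solution of the homogeneous resolvent Stokes problem, forcing $v = 0$. Combined with the bound $\|\lambda u\|_{L_q(\HS)} \le C\|f\|_{L_q(\HS)}$ from Theorem \ref{resolventthm}, this gives $\|\lambda (\lambda + A_q)^{-1}\|_{\mathcal{L}(J_q(\HS))} \le C$ uniformly on all of $\Sigma_\eps$ in the Dirichlet and Neumann cases, and on $\Sigma_\eps \cap \{|\lambda| \ge \delta\}$ in the Robin case with $\alpha > 0$ by the remark following Theorem \ref{resolventthm}.

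The main obstacle I anticipate is the Neumann identification: verifying, from the explicit Fourier-multiplier formulas of Section \ref{formula}, that when $f \in J_q(\HS)$ the recovered pressure has $\pi|_{\bHS} = 0$ so that $\nabla \pi \in G_q(\HS)$ and $P_q$ acts cleanly. A secondary technical issue is the Robin case with $\alpha > 0$, where the remark yields uniform bounds only for $|\lambda| \ge \delta$; this is handled by a spectral-shift argument together with the coercivity supplied by the $\alpha u$ boundary term, which places $0$ in $\rho(-A_q)$ and thereby extends the uniform sectorial bound across a neighborhood of the origin, ensuring that the resulting analytic semigroup is bounded (in fact exponentially decaying).
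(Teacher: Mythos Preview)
Your approach is exactly the one the paper intends: the corollary is stated immediately after Theorem~\ref{resolventthm} with the single sentence ``Theorem~\ref{resolventthm} with $g=h=0$ implies the generation of analytic semigroups on $J_q(\HS)$,'' and no further proof is given. Your proposal simply unpacks that sentence, and the skeleton---solve the resolvent problem with $g=h=0$, identify $u\in D(A_q)$, project to obtain $(\lambda+A_q)u=f$, and read off the sectorial bound $\|\lambda(\lambda+A_q)^{-1}\|\le C$---is correct and matches the paper.

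The two technical points you flag are genuine, and the paper does not address them either; it defers to the references \cite{G11,SS08,S07} for the Helmholtz decomposition and the precise domain of $A_q$. For the Neumann issue, note that when $f\in J_q(\HS)$ one has $\dv(\iota f)=0$, so the whole-space pressure $\tau$ in Section~\ref{reduction} vanishes identically and the remaining pressure $\kappa$ is harmonic; the boundary identity $\kappa=2\pd_N w_N$ together with $\dv w=0$ then forces $\kappa|_{\bHS}=0$, but this step is indeed carried out in \cite{SS08} rather than here. For the Robin case with $\alpha>0$, the paper's own Remark after Theorem~2.6 asserts that $0\in\rho(A_q)$ (yielding exponential decay of the semigroup), which is precisely the input your spectral-shift argument needs; again this is taken from \cite{SS07,S07} rather than reproved. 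So your proposal is faithful to the paper's argument, with the caveat that both you and the paper outsource these two verifications to the literature.
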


Let $\CD_{q,p}(\HS) = [J_q(\HS), D(A_q)]_{1-1/p,p}$ denote the real interpolation space. 
It is characterised as follows; 
\begin{align*}
&\CD_{q,p}(\HS)=\\
&\begin{cases}
J_q(\HS)\cap B^{2(1-1/p)}_{q,p}(\HS)~{\rm if}~\frac{2}{p}+\frac{1}{q}>2, {\rm Dirichlet}, \\
J_q(\HS)\cap B^{2(1-1/p)}_{q,p}(\HS)~{\rm if}~\frac{2}{p}+\frac{1}{q}>1, {\rm Neumann, Robin}, \\
\{u\in J_q(\HS)\cap B^{2(1-1/p)}_{q,p}(\HS)\mid u_j=0,~j=1, \ldots, N-1,~{\rm on}~\BR^N_0\}~{\rm if}~\frac{2}{p}+\frac{1}{q}<2, {\rm Dirichlet}, \\
\left\{u\in J_q(\HS)\cap B^{2(1-1/p)}_{q,p}(\HS)\;\middle|\; \begin{aligned}&\pd_N u_j + \pd_j u_N = 0,~j=1, \ldots, N-1, \\ & \pd_N u_N = 0\end{aligned}~{\rm on}~\BR^N_0\right\}~{\rm if}~\frac{2}{p}+\frac{1}{q}<1,~{\rm Neumann}, \\
\{u\in J_q(\HS)\cap B^{2(1-1/p)}_{q,p}(\HS)\mid \alpha u_j - \beta \pd_N u_j = 0,~j=1, \ldots, N-1, ~{\rm on}~\BR^N_0\}~{\rm if}~\frac{2}{p}+\frac{1}{q}<1,~{\rm Robin}. 
\end{cases}
\end{align*}

The maximal regularity on finite time interval is a corollary of our theorems (cf, \cite{SS08, S07}). 
\begin{thm}
Let $1<p,q<\infty$ and $0<T<\infty$. 
Then for any 
\begin{align*}
F&\in L_p(0,T, L_q(\HS)),\quad G \in L_p(0,T, W^1_q(\HS))\cap W^1_{p,0}(0,T, \hW^{-1}_q(\HS)), \\
H&\in \begin{cases}
W^1_{p,0}(0,T, L_q(\HS)) \cap L_p(0,T, W^2_q(\HS))~,H_N\in H^1_{p,0}(0,T, E_q(\HS))~\rm{Dirichlet}, \\
H^{1/2}_{p,0}(0,T, L_q(\HS))\cap L_p(0,T, W^1_q(\HS))~\rm{Neumann},\\
\left(H^{1/2}_{p,0}(0,T, L_q(\HS))\cap L_p (0,T, W^1_q(\HS))\right)\times \\
\hspace{15mm} \left(W^1_{p,0}(\BR, L_q(\HS)) \cap L_p(0,T, W^2_q(\HS))\cap H^1_{p,0}(0,T, E_q(\HS))\right)~\rm{Robin},
\end{cases}\\
U_0&\in \CD_{q,p}(\HS), 
\end{align*}
problem \eqref{non-stationary Stokes} admits a unique solution $(U, \Pi)$ such that 
\begin{align*}
U&\in W^1_p(0,T, L_q(\HS)) \cap L_p(0,T, W^2_q(\HS)), \\
\Pi &\in L_p(0,T, \hW^1_q(\HS))
\end{align*} 
with the maximal $L_p$-$L_q$ regularity; for all $\gamma \ge \gamma_0$, 
\begin{align*}
&\|(\pd_t U, \gamma U, \Lambda^{1/2}_\gamma \nabla U, \nabla^2 U, \nabla \Pi)\|_{L_p(0,T, L_q(\HS))} \\
\le& \begin{cases}
C\{\|(F, \Lambda^{1/2}_\gamma G, \nabla G, \pd_t H, \nabla^2 H, \pd_t(|\nabla'|^{-1}\pd_N H_N) \|_{L_p(0,T, L_q(\HS))}  \\
\qquad  + \|(\pd_t G, \gamma G)\|_{L_p(0,T, \hW^{-1}_q(\HS))} + \|U_0\|_{B^{2(1-1/p)}_{q,p}(\HS)}\}~\rm{Dirichlet},\\
C\{\|(F, \Lambda^{1/2}_\gamma G, \nabla G, \Lambda^{1/2}_\gamma H, \nabla H) \|_{L_p(0,T, L_q(\HS))} \\
\qquad + \|(\pd_t G, \gamma G)\|_{L_p(0,T, \hW^{-1}_q(\HS))} + \|U_0\|_{B^{2(1-1/p)}_{q,p}(\HS)}\}~\rm{Neumann}, \\
C'\{\|(F, \Lambda^{1/2}_\gamma G, \nabla G, \Lambda^{1/2}_\gamma H', \nabla H', \pd_t H_N, \nabla^2 H_N, \pd_t(|\nabla'|^{-1}\pd_N H_N) ) \|_{L_p(0,T, L_q(\HS))} \\
\qquad + \|(\pd_t G, \gamma G)\|_{L_p(0,T, \hW^{-1}_q(\HS))} + \|U_0\|_{B^{2(1-1/p)}_{q,p}(\HS)}\}~\rm{Robin}, 
\end{cases}
\end{align*}
with some constants $C=C_{N, p, q, \gamma_0, T}$ and $C'=C'_{N, p, q, \gamma_0, T, \alpha, \beta}$, where $\gamma_0>0$, or $\gamma_0\ge0$ when Dirichlet, Neumann and Robin boundary with $\alpha=0$. 
\end{thm}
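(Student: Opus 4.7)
The plan is to reduce to Theorem \ref{maxregthm} by subtracting off the semigroup evolution of the initial data and then extending the remaining inhomogeneous data from $(0,T)$ to all of $\BR$. Set $V_1(t) := e^{-tA_q}U_0$ for $t \ge 0$; by the preceding corollary, $-A_q$ generates a bounded analytic semigroup on $J_q(\HS)$, and since $U_0 \in \CD_{q,p}(\HS) = [J_q(\HS), D(A_q)]_{1-1/p,p}$, the trace characterisation of real interpolation in the maximal-regularity setting (see \cite{PS16}) yields
\[ \|\pd_t V_1\|_{L_p(0,T; L_q(\HS))} + \|V_1\|_{L_p(0,T; W^2_q(\HS))} \le C \|U_0\|_{B^{2(1-1/p)}_{q,p}(\HS)}. \]
Recovering the associated pressure $\Pi_1$ from $\nabla \Pi_1 = (I - P_q)\Delta V_1$ via the Helmholtz decomposition, the shifted problem for $(W, \tilde \Pi) := (U - V_1, \Pi - \Pi_1)$ is \eqref{non-stationary Stokes} on $(0,T)$ with data $(F, G, H - V_1)$ and zero initial data; the modified boundary data $H - V_1$ sits in the same function spaces as $H$ by the trace theorem applied to $V_1$.

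Next, extend $(F, G, H - V_1)$ from $(0,T)$ to all of $\BR$: by zero for $t < 0$ (legitimate because each time-dependent space on $(0,T)$ is by definition the restriction of the corresponding space on $\BR$ with vanishing trace at $t = 0$), and for $t > T$ by a bounded Stein-type reflection $t \mapsto 2T - t$ composed with a smooth cut-off, which preserves the mixed Sobolev, Bessel-potential, and $E_q(\HS)$ norms involved. Apply Theorem \ref{maxregthm} to the extended data on $\BR$ with the given $\gamma_0$, yielding a solution whose restriction to $(0,T)$ is $(W, \tilde\Pi)$. Since $e^{-\gamma t}$ is comparable to $1$ on $(0,T)$ up to constants depending on $\gamma_0$ and $T$, the weighted estimate on $\BR$ delivers the unweighted $L_p(0,T;\cdot)$ bound; the $\gamma$-dependence for arbitrary $\gamma \ge \gamma_0$ is inherited directly from Theorem \ref{maxregthm}. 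Uniqueness follows from the resolvent estimate of Theorem \ref{resolventthm} by a standard Laplace-transform argument.

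The main obstacle is the extension step for the Bessel-potential components of $H$, namely the $H^{1/2}_{p,0}$ factors in the Neumann and tangential Robin cases and the $H^1_{p,0}(E_q(\HS))$ factor in the Dirichlet and normal Robin cases. One needs a single extension operator $(0,T)\to\BR$ bounded simultaneously in all of the anisotropic mixed norms appearing. Since the extension acts only in $t$, it commutes with the spatial operators $|\nabla'|^{-1}$, $\nabla$, and $\Delta$, so it suffices to verify boundedness in the $t$-variable Bessel-potential norms; the zero extension to $t<0$ is fine because $H^{s}_{p,0}$ functions with $s\le 1$ extend by zero across $t=0$ without loss, and a Stein reflection at $t=T$ preserves the integer and half-integer smoothness required. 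A subsidiary issue is to ensure that the trace of $V_1$ on $\bHS$ belongs to the same class as $H$ (with the correct vanishing at $t=0$ where required); this reduces, via the boundary conditions built into $\CD_{q,p}(\HS)$ in the sub-ranges $\tfrac{2}{p}+\tfrac{1}{q}<2$ (Dirichlet) or $<1$ (Neumann, Robin), to the standard trace theorem for parabolic anisotropic spaces.
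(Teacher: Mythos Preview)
The paper does not supply its own proof of this theorem; it merely records the result as ``a corollary of our theorems (cf, \cite{SS08, S07})'' and cites the references for the details. Your outline---absorb $U_0$ via the analytic semigroup $e^{-tA_q}$, then extend the remaining data from $(0,T)$ to $\BR$ and invoke Theorem~\ref{maxregthm}---is exactly the standard reduction those references carry out, so there is nothing substantive to compare.

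One small correction: because $V_1(t)=e^{-tA_q}U_0$ takes values in $D(A_q)$ for a.e.\ $t>0$ (and in $J_q(\HS)$ for all $t$), the pair $(V_1,\Pi_1)$ already satisfies the \emph{homogeneous} boundary condition of the relevant type; in the Neumann case this uses that $\Pi_1\in \hW^1_{q,0}(\HS)$ by the definition of $G_q(\HS)$. Hence the boundary datum for $W=U-V_1$ is simply $H$, not $H-V_1$, and the ``subsidiary issue'' you raise about the boundary trace of $V_1$ lying in the same class as $H$ is vacuous---that trace is identically zero. This actually simplifies your argument: no parabolic trace theorem for $V_1$ is needed, only the maximal-regularity bound $\|(\pd_t V_1,\nabla^2 V_1)\|_{L_p(0,T;L_q)}\le C\|U_0\|_{B^{2(1-1/p)}_{q,p}}$.
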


\begin{rem}
In fact, the proposition holds for $T=\infty$ for Dirichlet or Robin boundary conditions. 
These cases correspond to the exponential decay of $e^{-tA_q} U_0$ because $0$ belongs to the resolvent set of Stokes operator $A_q$. 
For Neumann boundary case, the paper \cite{SS08} treated global results by considering a quotient space. 
\end{rem}

\begin{rem}
As far as we know, almost all results are $h_N=0$ and $H_N=0$ since that is physically reasonable. 
However we are able to extend non-homogeneous boundary data. 
\end{rem}

\section{Preliminaries}\label{preliminaries}

Since key lemma in this paper is operator-valued Fourier multiplier theorem, we need some preparations for the base spaces and the symbols. 
Almost all of results in this section can be found in the book \cite{PS16}. 

\begin{dfn}
A Banach space $X$ is said to belong to the class $\CH\CT$ if the Hilbert transform $\CH$, defined by 
\[ \CH f:= \frac{1}{\pi} \lim_{\eps\to+0} \int_{|t-s|>\eps} \frac{f(s)}{t-s} ds, \]
is a bounded linear operator on $L_p(\BR, X)$ for some $1<p<\infty$.  
In this case we write $X\in \CH\CT$. 
\end{dfn}

Let $X$ and $Y$ be Banach spaces with norms $\|\cdot\|_X$ and $\|\cdot \|_Y$. 
Let $\CL(X, Y)$ denote the set of all bounded linear operators from $X$ to $Y$, and $\CL(X):=\CL(X,X)$. 

\begin{dfn}
A family of operators $\CT\subset \CL(X, Y)$ is called $\CR$-bounded, if there exist constant $C>0$ and $1\le p < \infty$ such that for each $m \in \BN, \{T_j\}_{j=1}^m \subset \CT, \{x_j\}_{j=1}^m \subset X$ and for all sequences $\{\eps_j(u)\}_{j=1}^m$ of independent, symmetric, $\{-1, 1\}$-valued random variables on a probability space $(\Omega, \CA, \mu)$ the inequality 
\[ |\sum_{j=1}^m \eps_j T_j x_j |_{L_p(\Omega, Y)} \le C  |\sum_{j=1}^m \eps_j x_j|_{L_p(\Omega, X)}\]
is valid. 
The smallest such $C$ is called $\CR$-bound of $\CT$, which is denoted by $\CR(\CT)$. 
\end{dfn}
Note that when $X$ and $Y$ are Hilbert spaces, $\CT\subset\CL(X,Y)$ is $\CR$-bounded if and only if $\CT$ is uniformly bounded. 

Let $\dBR:=\BR\setminus\{0\}$ and $\dBR^n=[\dBR]^n$. 
Given $M\in C (\dBR^n, \CL(X, Y))$, we define an operator $T_M : \CF^{-1}\CD(\BR^n, X) \to \CS(\BR^n, Y)$ by means of 
\[ T_M \phi := \CF^{-1} M \CF \phi, \quad \text{for~all~}\CF\phi\in \CD(\BR^n, X). \] 

\begin{thm}[Weis \cite{W01}]
Let $X$, $Y\in \CH\CT$ and $1<p<\infty$. 
Let $M\in C^1(\dBR, \CL(X, Y))$ satisfy 
\[\CR(\{ (\xi \frac{d}{d\xi})^j M(\xi) \mid \xi \in \dBR, j=0,1\}) = \kappa < \infty. \]
Then the operator $T_M$ is a bounded linear operator from $L_p(\BR, X)$ to $L_p(\BR, Y)$. 
Moreover 
\[\|T_M\|_{\CL(L_p(\BR, X), L_p(\BR, Y))}\le C\kappa\]
for some positive constant $C$ depending on $X$, $Y$ and $p$. 
\end{thm}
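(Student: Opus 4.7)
The plan is to reduce Weis's theorem to three classical ingredients: the vector-valued Littlewood--Paley equivalence available in $\CH\CT$-spaces, an integration-by-parts representation of each dyadic block symbol in terms of $M$ and $\xi M'$, and Kahane's contraction principle combined with $\CR$-boundedness. I would first fix a smooth dyadic partition of unity $\{\phi_k\}_{k\in\BZ}$ on $\dBR$ with $\mathrm{supp}\,\phi_k\subset\{2^{k-1}\le|\xi|\le 2^{k+1}\}$, set $\Delta_k := T_{\phi_k}$, and choose a slight enlargement $\widetilde\phi_k$ satisfying $\widetilde\phi_k\phi_k = \phi_k$, giving the decomposition $T_M f = \sum_k T_{M\phi_k}\widetilde\Delta_k f$.

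Invoking the $\CH\CT$ property for $X$ and $Y$ yields, via the iterated-Hilbert-transform argument of Bourgain and McConnell, the randomized Littlewood--Paley equivalence
\[ \|g\|_{L_p(\BR, Z)} \simeq \Bigl( \IE\Bigl\|\sum_k \eps_k \Delta_k g\Bigr\|_{L_p(\BR, Z)}^p \Bigr)^{1/p}, \qquad Z\in\{X, Y\}, \]
with $\{\eps_k\}$ a Rademacher sequence on a probability space. The target then reduces to bounding $\IE\|\sum_k\eps_k T_{M\phi_k}\widetilde\Delta_k f\|_{L_p(\BR, Y)}$ by $\kappa$ times the analogous $X$-side sum. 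Each dyadic block operator $T_{M\phi_k}$ I would represent as a scalar-kernel average of the pointwise values $M(\xi)$ and $\xi M'(\xi)$ at scales $\xi\sim 2^k$: for $\xi$ in the support of $\phi_k$ the identity $M(\xi) = M(2^k) + \int_{2^k}^\xi (sM'(s))\,\frac{ds}{s}$ rewrites $M\phi_k$ as the sum of a term $M(2^k)\phi_k(\xi)$ and a parameter-integral of terms of the form $(sM'(s))\chi_k^{(s)}(\xi)$, where the scalar factors $\chi_k^{(s)}$ have Mikhlin norms uniform in both $k$ and $s$. Inserting this into the Rademacher sum, applying Fubini, and then using $\CR$-boundedness together with Kahane's contraction principle lets one pull out the operator coefficients at a cost of $\kappa$, leaving scalar Mikhlin multipliers acting on $\widetilde\Delta_k f$. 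The Littlewood--Paley equivalence on the $X$-side then closes the estimate.

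The main obstacle will be justifying rigorously the interchange of the continuous integration in the scale parameter $s$ with the discrete Rademacher sum, and applying $\CR$-boundedness to the continuous family of operator coefficients that results; a continuous version of Kahane's contraction principle, obtained by a Riemann-sum approximation combined with the defining inequality of $\CR$-boundedness, is required here. A secondary technical point is the uniform $L_p(\BR, X)$-boundedness in the dyadic scale $k$ of the scalar Mikhlin multipliers $\chi_k^{(s)}$; this follows from Bourgain's extension of the Mikhlin theorem to UMD spaces together with the dilation invariance of the Mikhlin norm. With these ingredients assembled, the remaining bookkeeping on supports, enlargements, and the degenerate frequencies $\xi = 0, \infty$ (handled by a density argument from, say, $M$ with compact support in $\dBR$) becomes routine.
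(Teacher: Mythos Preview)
The paper does not prove this theorem: it is quoted in Section~\ref{preliminaries} as a preliminary tool, with attribution to Weis~\cite{W01} and the remark that ``almost all of results in this section can be found in the book~\cite{PS16}.'' There is therefore no proof in the paper to compare your proposal against.

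That said, your sketch follows the standard strategy found in the references the paper cites (e.g.\ \cite{KW04}, \cite{DHP03}, \cite{PS16}): randomized Littlewood--Paley theory in UMD spaces, rewriting each dyadic block of $M$ via the fundamental theorem of calculus to exhibit it as an average over the family $\{M(\xi),\,\xi M'(\xi)\}$, and then invoking $\CR$-boundedness together with Kahane's contraction principle. The two technical points you flag---the continuous-parameter version of the contraction/$\CR$-boundedness step and the uniform scalar Mikhlin bounds for the auxiliary cutoffs---are exactly the places where care is needed, and both are handled in the cited monographs. So your plan is correct in outline; just be aware that for the purposes of this paper the result is taken as a black box rather than reproved.
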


\begin{dfn}
A Banach space $X$ is said to have property $(\alpha)$ if there exists a constant $\alpha>0$ such that 
\[|\sum_{i,j=1}^m \alpha_{ij} \eps_i \eps_j' x_{ij}|_{L_2(\Omega\times \Omega', X)} \le \alpha |\sum_{i,j=1}^m \eps_i \eps_j' x_{ij}|_{L_2(\Omega\times \Omega', X)}, \]
for all $\alpha_{ij}\in\{-1,1\}, x_{ij}\in X, m\in \BN$, and all symmetric independent $\{-1, 1\}$-valued random variables $\{\eps_i\}_{i=1}^m$ resp. $\{\eps_j'\}_{j=1}^m$ on a probability space $(\Omega, \CA, \mu)$ resp. $(\Omega', \CA', \mu')$. 
The class $\CH\CT(\alpha)$ denotes the set of all Banach spaces which belong to $\CH\CT$ and have property $(\alpha)$. 
\end{dfn}

\begin{rem}
For any Hilbert space $E$, we have $E\in \CH\CT(\alpha)$. 
If $(S, \Sigma, \sigma)$ is a sigma-finite measure space and $1<p<\infty$, then $L_p(S, E) \in \CH\CT(\alpha)$ as well. 
\end{rem}

\begin{thm}\label{Rbound}
Let $1<p<\infty$, $X$, $Y\in \CH\CT(\alpha)$, and suppose that the family of multipliers $\CM\subset C^n(\dBR^n, \CL(X, Y))$ satisfies 
\[\CR(\{ \xi^\alpha \pd_\xi^\alpha M(\xi) \mid \xi\in \dBR^n, \alpha\in\{0,1\}^n, M\in \CM\})=:\kappa<\infty.\]
Then the family of operators $\CT:=\{T_M\mid M\in\CM\} \subset \CL(L_p(\BR^n, X), L_p(\BR^n, Y))$ is $\CR$-bounded with $\CR(\CT)\le C\kappa$, where $C>0$ only depends on $X$, $Y$ and $p$. 
\end{thm}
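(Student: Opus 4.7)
The plan is to reduce the theorem to Weis's scalar operator-valued multiplier theorem (the previous statement) by packaging any finite subfamily of $\CM$ into a single block-diagonal multiplier acting on a Rademacher sequence space, and then to iterate one coordinate at a time, with property $(\alpha)$ used to collapse the double Rademacher sums that appear at each stage.

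Fix finite collections $\{M_j\}_{j=1}^m \subset \CM$ and $\{f_j\}_{j=1}^m \subset L_p(\BR^n, X)$ and independent Rademachers $\{\eps_j\}$. I would introduce the complemented subspace $\mathrm{Rad}_m(X) := \mathrm{span}\{\eps_j x_j : x_j \in X,\ 1 \le j \le m\} \subset L_p(\Omega, X)$, equipped with the inherited norm, and define the block-diagonal symbol
\[ \tilde M(\xi)\Bigl(\sum_{j=1}^m \eps_j x_j\Bigr) := \sum_{j=1}^m \eps_j M_j(\xi) x_j, \qquad \xi \in \dBR^n. \]
By the very definition of an $\CR$-bound, the hypothesis $\CR(\{\xi^\alpha \pd_\xi^\alpha M(\xi)\}) \le \kappa$ is precisely the pointwise operator inequality $\|\xi^\alpha \pd_\xi^\alpha \tilde M(\xi)\|_{\CL(\mathrm{Rad}_m(X), \mathrm{Rad}_m(Y))} \le \kappa$ for every $\xi \in \dBR^n$ and every $\alpha \in \{0,1\}^n$, uniformly in $m$ and in the choice of $M_j \in \CM$.

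For $n = 1$ I would appeal directly to the previous theorem on the pair of Banach spaces $(\mathrm{Rad}_m(X), \mathrm{Rad}_m(Y))$, both of which inherit the $\CH\CT$ property since they are complemented in $L_p(\Omega, X)$ and $L_p(\Omega, Y)$ respectively (and $\CH\CT$ passes under Bochner integration and to complemented subspaces). This yields $\|T_{\tilde M}\|_{\CL(L_p(\BR, \mathrm{Rad}_m(X)), L_p(\BR, \mathrm{Rad}_m(Y)))} \le C\kappa$. A Fubini-type identification gives $\|\sum_j \eps_j g_j\|_{L_p(\BR, \mathrm{Rad}_m(X))} = \|\sum_j \eps_j g_j\|_{L_p(\Omega, L_p(\BR, X))}$, and since $T_{\tilde M}(\sum_j \eps_j f_j) = \sum_j \eps_j T_{M_j} f_j$, this is exactly the desired $\CR$-bound for $\{T_M : M \in \CM\}$.

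For $n \ge 2$ I would proceed by induction, applying the $n=1$ argument in one coordinate at a time. The step I expect to be the main obstacle is the transfer lemma that makes the induction close: that pointwise $\CR$-boundedness of the symbols on $(X,Y)$ lifts to $\CR$-boundedness of the induced one-dimensional Fourier multiplier operators between $L_p(\BR, X)$ and $L_p(\BR, Y)$ with a controlled constant. This is where property $(\alpha)$ enters crucially, since Kahane's contraction principle together with $(\alpha)$ yields the norm equivalence $\mathrm{Rad}(L_p(\BR, X)) \simeq L_p(\BR, \mathrm{Rad}(X))$, allowing the double Rademacher average produced at each inductive step to be reorganised back into a single Rademacher sum. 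Without $(\alpha)$ the iteration accumulates uncontrolled nested averages and fails. After $n$ applications this delivers the claim with constant depending only on $X$, $Y$, $p$ (and $n$).
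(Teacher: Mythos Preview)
The paper does not prove this theorem; it is stated in the Preliminaries section as a known result taken from \cite{PS16} (and ultimately from the Kunstmann--Weis and Denk--Hieber--Pr\"uss circle of ideas), so there is no paper-side proof to compare against.

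Your strategy---encoding a finite subfamily as a block-diagonal multiplier on $\mathrm{Rad}_m(X)$, invoking Weis's one-dimensional theorem, and then iterating coordinate by coordinate using property $(\alpha)$---is indeed the standard route taken in those references. The reduction in the first two paragraphs is clean and correct: $\CR$-boundedness of the $M_j(\xi)$ is exactly uniform boundedness of $\tilde M(\xi)$ on the Rademacher spaces, and $\mathrm{Rad}_m(X)$ inherits $\CH\CT$ as a complemented subspace of $L_p(\Omega,X)$.

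The one place where your sketch is thin is the inductive step. You correctly identify that property $(\alpha)$ is what makes the iteration close, but the sentence ``property $(\alpha)$ \dots\ yields the norm equivalence $\mathrm{Rad}(L_p(\BR,X)) \simeq L_p(\BR,\mathrm{Rad}(X))$'' is not quite the mechanism. What property $(\alpha)$ actually provides is that $\CR$-boundedness on $X$ implies $\CR$-boundedness on $\mathrm{Rad}(X)$ (equivalently, that doubly-indexed Rademacher sums $\sum_{i,j}\eps_i\eps_j' T_{ij}x_{ij}$ can be controlled when the $T_{ij}$ come from an $\CR$-bounded set). Concretely, the induction needs: if $\{N(\xi_1):\xi_1\in\dot\BR\}\subset\CL(X,Y)$ satisfies an $\CR$-bounded Mikhlin condition, then $\{T_{N(\xi_1)}:\xi_1\in\dot\BR\}\subset\CL(L_p(\BR^{n-1},X),L_p(\BR^{n-1},Y))$ is again $\CR$-bounded. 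This is the content of, e.g., Kunstmann--Weis \cite{KW04}, and its proof uses $(\alpha)$ to pass from $\CR$-bounds on $(X,Y)$ to $\CR$-bounds on $(\mathrm{Rad}(X),\mathrm{Rad}(Y))$, then applies the $n=1$ case on those enlarged spaces. If you spell out that step (or cite it), the argument is complete and matches the literature proof.
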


Let $\tilde{\Sigma}_\eta:=\{z\in\BC\setminus\{0\}\mid |\arg z|<\eta\} \cup \{z\in\BC\setminus\{0\}\mid \pi-\eta<|\arg z|\}$ for $\eta\in(0, \pi/2)$. 
To verify the Lizorkin condition in above theorem, a useful sufficient condition is known in terms of holomorphic and boundedness, which is denoted by the class $H^\infty$. 

\begin{thm}[{\cite[Proposition 4.3.10]{PS16}}]\label{H^infty}
Let $X, Y$ be Banach spaces and suppose that, for some $0<\eta<\pi/2$, the family of multipliers $\CM\subset H^\infty(\tilde{\Sigma}_\eta^n, \CL(X, Y))$ satisfies 
\[\CR(\{M(z)\mid z\in \tilde{\Sigma}_\eta^n, M\in \CM\})=:\kappa < \infty. \]
Then 
\[\CR(\{\xi^\alpha \pd_\xi^\alpha M(\xi)\mid \xi\in\dBR^n, |\alpha|=k, M\in \CM\})\le \kappa/(\sin\eta)^k, \]
for each $k\in \BN_0$. 
\end{thm}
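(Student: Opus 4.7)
The plan is to reduce derivatives of $M$ to $M$ itself via Cauchy's integral formula on carefully-chosen polycircles, and then pass from pointwise to $\CR$-bounds using the principle that $\CR$-boundedness is preserved by averaging against a probability measure together with Kahane's contraction principle to absorb the scalar kernel.

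The geometric input is that for any $\xi\in\dBR$, the disc $\{w:|w-\xi|\le|\xi|\sin\eta\}$ is inscribed in $\tilde{\Sigma}_\eta$: the perpendicular distance from the real point $\xi$ to each of the four boundary rays of $\tilde{\Sigma}_\eta$ equals exactly $|\xi|\sin\eta$, and no larger radius fits. Hence for $\xi\in\dBR^n$ the product polycircle $\prod_j\{w_j:|w_j-\xi_j|=|\xi_j|\sin\eta\}$ lies in $\tilde{\Sigma}_\eta^n$, and the iterated Cauchy formula combined with the parametrisation $w_j=\xi_j+|\xi_j|\sin\eta\,e^{i\theta_j}$ yields, for any $\alpha$ with $|\alpha|=k$,
\[
\xi^\alpha\pd_\xi^\alpha M(\xi) \;=\; \frac{\alpha!}{(2\pi)^n\,(\sin\eta)^k}\int_{[0,2\pi]^n} c(\xi,\theta)\,M\bigl(w(\xi,\theta)\bigr)\,d\theta,
\]
where $|c(\xi,\theta)|\equiv 1$ and $w(\xi,\theta)\in\tilde{\Sigma}_\eta^n$. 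Thus $\xi^\alpha\pd_\xi^\alpha M(\xi)$ is, up to the explicit factor $\alpha!/(\sin\eta)^k$, a probability average of values of $M$ at points of $\tilde{\Sigma}_\eta^n$ twisted by a unit-modulus scalar.

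To upgrade to an $\CR$-bound, take finite sequences $\{\xi^{(\ell)}\}\subset\dBR^n$, $\{M_\ell\}\subset\CM$, $\{x_\ell\}\subset X$ and Rademachers $\eps_\ell$. Applying Minkowski's inequality in $L_p(\Omega,Y)$ to move the $\theta$-integral outside the Rademacher sum, then the hypothesis $\CR(\{M(w):w\in\tilde{\Sigma}_\eta^n,M\in\CM\})=\kappa$ at each fixed $\theta$, and finally Kahane's contraction principle to discard the scalars $c_\ell(\theta)$ of modulus one, one obtains
\[
\Bigl\|\sum_\ell\eps_\ell\,(\xi^{(\ell)})^\alpha\pd_\xi^\alpha M_\ell(\xi^{(\ell)})x_\ell\Bigr\|_{L_p(\Omega,Y)} \;\le\; \frac{\alpha!\,\kappa}{(\sin\eta)^k}\,\Bigl\|\sum_\ell\eps_\ell x_\ell\Bigr\|_{L_p(\Omega,X)}.
\]
Since for the use in Theorem \ref{Rbound} only multi-indices $\alpha\in\{0,1\}^n$ occur, $\alpha!=1$ and the stated bound $\kappa/(\sin\eta)^k$ follows.

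The main obstacle is really just the geometric pin-down in the second paragraph: the radius $|\xi|\sin\eta$ is sharp, and any smaller admissible radius would inflate the exponent in $1/\sin\eta$. Once that is fixed, the averaging/contraction machinery is routine, the extension from the half-sector $\Sigma_\eta$ to the bi-sector $\tilde{\Sigma}_\eta$ is immediate by reflecting the contour for $\xi<0$, and the $n$-dimensional case requires no genuine multi-variable harmonic analysis thanks to the product structure of $H^\infty(\tilde{\Sigma}_\eta^n)$.
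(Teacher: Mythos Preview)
The paper does not prove this theorem; it is quoted from \cite[Proposition~4.3.10]{PS16}. Your argument---Cauchy's integral formula on the inscribed polycircle of radii $|\xi_j|\sin\eta$, then Minkowski's inequality to pull the $\theta$-integral outside, the $\CR$-bound hypothesis pointwise in $\theta$, and Kahane's contraction to remove the unimodular scalars---is exactly the standard proof and the one given in \cite{PS16}.

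One minor point on constants: Kahane's contraction principle for \emph{complex} unimodular scalars against real Rademacher variables costs a universal factor (at most $2$) in a general Banach space, so strictly speaking your bound reads $C\,\alpha!\,\kappa/(\sin\eta)^k$ rather than $\kappa/(\sin\eta)^k$. Together with the factor $\alpha!$ you already flagged (which is $1$ for the multi-indices $\alpha\in\{0,1\}^n$ actually used in Theorem~\ref{Rbound}), this harmless constant does not affect any application in the paper. You may also want to note that the circle of radius exactly $|\xi_j|\sin\eta$ is tangent to the boundary of $\tilde\Sigma_\eta$, so one integrates over radius $\rho|\xi_j|\sin\eta$ with $\rho<1$ and lets $\rho\to 1$ at the end; this is routine.
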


From above theorem, we do not need to show $\CR$-boundedness of the derivatives when multipliers are bounded and holomorphic. 
To take over $\CR$-boundedness, we need a dominated theorem below. 

\begin{thm}[{\cite[Proposition 4.1.5]{PS16}}]\label{Rkernel}
Let $X$, $Y$ be Banach spaces, $D\subset \BR^n$, and $1<p<\infty$. 
Suppose $\CK\subset \CL(L_p(D, X), L_p(D, Y))$ is a family of kernel operators in the sense that 
\[Kf(x) = \int_D k(x,x')f(x') dx', \quad x\in D, f\in L_p(D, X), \]
for each $K\in \CK$, where the kernels $k:D\times D\to \CL(X,Y)$ are measurable, with 
\[\CR(\{k(x,x'): K\in \CK\}) \le k_0(x,x'), \quad x,x'\in D, \]
and the operator $K_0$ with scalar kernel $k_0$ is bounded in $L_p(D)$. 
Then $\CK\subset \CL(L_p(D, X), L_p(D, Y))$ is $\CR$-bounded and $\CR(\CK)\le \|K_0\|_{L_p(D)}$. 
\end{thm}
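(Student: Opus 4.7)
The plan is to unwind the definition of $\CR$-boundedness and reduce everything to the scalar kernel bound on $K_0$, using Fubini twice to swap the $L_p(\Omega)$ and $L_p(D)$ orders of integration, and Minkowski's integral inequality to pull the probabilistic norm inside the spatial integral defining each kernel operator.

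First I would fix $K_1,\ldots,K_m\in\CK$ with kernels $k_1,\ldots,k_m$, functions $f_1,\ldots,f_m\in L_p(D,X)$, and Rademacher variables $\{\eps_j\}_{j=1}^m$ on $(\Omega,\CA,\mu)$. The goal is to control
\[
\Bigl\|\sum_{j=1}^m \eps_j K_j f_j\Bigr\|_{L_p(\Omega,L_p(D,Y))}
\]
by $\|K_0\|_{\CL(L_p(D))}$ times $\bigl\|\sum_j \eps_j f_j\bigr\|_{L_p(\Omega,L_p(D,X))}$. Since the same exponent $p$ appears for both measures, Fubini's theorem lets me replace the outer norm by $\bigl\|\sum_j \eps_j K_j f_j\bigr\|_{L_p(D,L_p(\Omega,Y))}$.

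Second, for fixed $x\in D$ I write
\[
\Bigl(\sum_{j}\eps_j K_j f_j\Bigr)(x)=\int_D \sum_{j}\eps_j\, k_j(x,x')\,f_j(x')\,dx'.
\]
Minkowski's integral inequality applied to the $L_p(\Omega,Y)$-valued integrand yields
\[
\Bigl\|\sum_{j}\eps_j K_j f_j(x)\Bigr\|_{L_p(\Omega,Y)}\le \int_D \Bigl\|\sum_{j}\eps_j\, k_j(x,x')\,f_j(x')\Bigr\|_{L_p(\Omega,Y)}\,dx'.
\]
Now for each $(x,x')$ the family $\{k_j(x,x')\}_j$ lies in the $\CR$-bounded set whose bound is controlled by $k_0(x,x')$, so the integrand is bounded pointwise by $k_0(x,x')\,\bigl\|\sum_j \eps_j f_j(x')\bigr\|_{L_p(\Omega,X)}$. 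Setting $g(x'):=\bigl\|\sum_j\eps_j f_j(x')\bigr\|_{L_p(\Omega,X)}$, the right-hand side equals $(K_0 g)(x)$.

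Third, taking the $L_p(D)$-norm in $x$ and using the assumed scalar $L_p(D)$-boundedness of $K_0$ gives
\[
\Bigl\|\sum_{j}\eps_j K_j f_j\Bigr\|_{L_p(D,L_p(\Omega,Y))}\le \|K_0 g\|_{L_p(D)}\le \|K_0\|_{\CL(L_p(D))}\,\|g\|_{L_p(D)}.
\]
A second application of Fubini identifies $\|g\|_{L_p(D)}=\bigl\|\sum_j\eps_j f_j\bigr\|_{L_p(\Omega,L_p(D,X))}$, closing the estimate with the desired constant. Since $L_p(D,X)$ belongs to the class where $\CR$-boundedness is independent of the exponent used in the defining inequality (Kahane), this yields $\CR(\CK)\le\|K_0\|_{\CL(L_p(D))}$.

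The only real technical point is the joint measurability required to apply Fubini and Minkowski to the map $(x,x')\mapsto k_j(x,x')f_j(x')$ with values in $Y$; this is granted once the kernels are measurable in the strong operator topology, which is the standing assumption. Beyond that, no obstacle remains: the argument is essentially the classical scalar Schur-type bound dressed up with Rademacher averages.
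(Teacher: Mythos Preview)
The paper does not contain its own proof of this statement; it is quoted verbatim as \cite[Proposition 4.1.5]{PS16} and used as a black box in the proof of Theorem \ref{thm}. Your argument is correct and is precisely the standard proof one finds in the cited source: Fubini to interchange $L_p(\Omega)$ and $L_p(D)$, Minkowski to push the Rademacher norm inside the kernel integral, the pointwise $\CR$-bound hypothesis to produce the scalar majorant $k_0(x,x')g(x')$, and finally the assumed $L_p(D)$-boundedness of $K_0$.
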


Moreover we use the following theorem of the boundedness of a kernel operator. 

\begin{lem}[{\cite[Lemma 5.5]{SS12}, \cite[Proposition 1.4.16]{KS12}}]\label{kernel}
Let $X$ be a Banach space, $k(t,s)$ be a function defined on $(0,\infty)\times (0,\infty)$ which satisfies the condition: $k(\lambda t, \lambda s)= \lambda^{-1} k(t,s)$ for any $\lambda>0$ and $(t,s)\in (0,\infty)\times (0,\infty)$. 
In addition, we assume that for some $1\le q < \infty$ 
\[\int_0^\infty |k(1, s)|s^{-1/q} ds =: A_q < \infty. \]
If we define the integral operator $T$ by the formula: 
\[[Tf](t)=\int_0^\infty k(t,s)f(s)ds, \]
then $T$ is a bounded linear operator on $L_q(\BR_+, X)$ and 
\[ \|Tf\|_{L_q(\BR_+, X)} \le A_q \|f\|_{L_q(\BR_+, X)}.\]
\end{lem}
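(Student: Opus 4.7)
The plan is to reduce the two-variable kernel to a one-variable kernel via the homogeneity, then apply Minkowski's integral inequality (in vector-valued form) together with the scaling behavior of the $L_q$ norm under dilation.

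First I would exploit the scaling relation $k(\lambda t,\lambda s)=\lambda^{-1}k(t,s)$. Choosing $\lambda=1/t$ yields $k(t,s)=t^{-1}k(1,s/t)$. Substituting this into the definition of $Tf$ and then changing variables $s=tu$, so that $ds=t\,du$, gives the representation
\[
[Tf](t)=\int_0^\infty t^{-1}k(1,s/t)f(s)\,ds=\int_0^\infty k(1,u)f(tu)\,du,
\]
which displays $Tf$ as a superposition of dilations of $f$ weighted by the scalar kernel $u\mapsto k(1,u)$.

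Next I would take $L_q(\BR_+,X)$ norms in $t$ and apply Minkowski's integral inequality (valid for $1\le q<\infty$ with Banach-space-valued integrands):
\[
\|Tf\|_{L_q(\BR_+,X)}\le \int_0^\infty |k(1,u)|\,\|f(\cdot\,u)\|_{L_q(\BR_+,X)}\,du.
\]
A direct change of variables $r=tu$ gives
\[
\|f(\cdot\,u)\|_{L_q(\BR_+,X)}^q=\int_0^\infty \|f(tu)\|_X^q\,dt=u^{-1}\int_0^\infty\|f(r)\|_X^q\,dr,
\]
so $\|f(\cdot\,u)\|_{L_q(\BR_+,X)}=u^{-1/q}\|f\|_{L_q(\BR_+,X)}$. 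Plugging this in produces exactly
\[
\|Tf\|_{L_q(\BR_+,X)}\le \Bigl(\int_0^\infty |k(1,u)|u^{-1/q}\,du\Bigr)\|f\|_{L_q(\BR_+,X)}=A_q\|f\|_{L_q(\BR_+,X)},
\]
as claimed. Boundedness of $T$ follows, with operator norm at most $A_q$.

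There is no serious obstacle: the only points requiring care are the justification of Minkowski's inequality for $X$-valued integrands (standard, as the integrand is Bochner measurable whenever $f$ is, and the scalar majorant $|k(1,u)|\|f(\cdot\,u)\|_X$ is integrable on a set of full measure by the hypothesis $A_q<\infty$), and a density argument to reduce to, say, compactly supported continuous $f$ so that the interchange of integrals used in the change of variables is unambiguous. Neither point requires any structure on $X$ beyond being a Banach space.
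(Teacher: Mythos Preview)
Your argument is correct and is precisely the standard proof of this lemma: reduce to a one-variable kernel via homogeneity, apply Minkowski's integral inequality, and use the dilation scaling of the $L_q$ norm. The paper does not actually prove this lemma but merely cites it from \cite{SS12} and \cite{KS12}; the proof in those references is exactly the one you have written.
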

We use the theorem for $k(t,s)=(t+s)^{-1}$ which satisfies the assumption. 

\section{Reduction to the problem only with boundary data}\label{reduction}

In this section we show that it is enough to consider the case $f=g=0$ or $F=G=0$ by subtracting solutions of inhomogeneous data. 

\subsection{Whole space}
We start considering with the whole space problem 
\begin{alignat}{5}
&\lambda u - \Delta u + \nabla \pi =f, &\quad&\dv u = g&\quad &{\rm in}~\BR^N, \label{R1}\\
 &\pd_t U - \Delta U + \nabla \Pi = F, &\quad &\dv U=G&\quad &{\rm in}~\BR^N\times(0,\infty) \label{R2}
\end{alignat}
subject to the initial condition $U(x,0)=0$. 
The following theorem is known for the whole space. 
\begin{thm}[{\cite[Theorem 3.1]{SS12}}]
Let $1<p, q<\infty, 0<\eps<\pi/2$ and $\gamma_0\ge0$. \\
{\rm (1)} For any $\lambda\in\Sigma_\eps, f\in L_q(\BR^N), g\in\hW^{-1}_q(\BR^N)\cap W^1_q(\BR^N)$, problem \eqref{R1} admits a unique solution $(u,\pi)\in W^2_q(\BR^N)\times \hW^1_q(\BR^N)$ that satisfies the following estimates: 
\[\|(|\lambda|u, |\lambda|^{1/2}\nabla u, \nabla^2 u, \nabla \pi)\|_{L_q(\BR^N)} \\
\le C_{N,q,\eps}\{\|(f, |\lambda|^{1/2} g, \nabla g) \|_{L_q(\BR^N)} + |\lambda| \|g\|_{\hW^{-1}_q(\BR^N)}\}.\]
{\rm (2)} For any $F\in L_{p,0,\gamma_0}(\BR, L_q(\BR^N))$ and $G\in L_{p,0,\gamma_0}(\BR, W^1_q(\BR^N))\cap W^1_{p,0,\gamma_0}(\BR, \hW^{-1}_q(\BR^N))$, problem \eqref{R2} admits a unique solution 
\[(U, \Pi)\in(W^1_{p,0,\gamma_0}(\BR, L_q(\BR^N))\cap L_{p,0,\gamma_0}(\BR, W^2_q(\BR^N)))\times L_{p,0,\gamma_0}(\BR, \hW^1_q(\BR^N))\]
that satisfies the estimate: 
\begin{align*}
&\|e^{-\gamma t}(\pd_t U, \gamma U, \Lambda^{1/2}_\gamma \nabla U, \nabla^2 U, \nabla \Pi)\|_{L_p(\BR, L_q(\BR^N))} \\
&\qquad \le C_{N,p,q, \gamma_0}\{\|e^{-\gamma t} (F, \Lambda^{1/2}_\gamma G, \nabla G) \|_{L_p(\BR, L_q(\BR^N))} + \|e^{-\gamma t} (\pd_t G, \gamma G)\|_{L_p(\BR, \hW^{-1}_q(\BR^N))}\}
\end{align*}
for any $\gamma\ge\gamma_0$. 
\end{thm}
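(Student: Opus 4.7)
The plan is to solve the resolvent problem \eqref{R1} by spatial Fourier transform, write $(u,\pi)$ explicitly as Fourier multipliers acting on $\hat f$ and $\hat g$, and show that the resulting symbols fall into families that are bounded and holomorphic in $\lambda\in\tilde\Sigma_{\pi/2-\eps}$ uniformly in $\xi\in\dBR^N$. Theorems \ref{H^infty} and \ref{Rbound} will then give the $L_q(\BR^N)$ bound in (1); since the resolvent symbols are already $\CR$-bounded in $\lambda$, a second application of Theorem \ref{Rbound} in the time variable — after Laplace-transforming \eqref{R2} — immediately upgrades this to the maximal $L_p$-$L_q$ estimate in (2).

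Applying $\CF$ to \eqref{R1} yields the algebraic system $(\lambda+|\xi|^2)\hat u_j+i\xi_j\hat\pi=\hat f_j$ and $i\xi\cdot\hat u=\hat g$, whose solution is
\[ \hat u_j = \frac{\hat f_j}{\lambda+|\xi|^2} - \frac{\xi_j(\xi\cdot\hat f)}{(\lambda+|\xi|^2)|\xi|^2} - \frac{i\xi_j\hat g}{|\xi|^2}, \qquad \hat\pi = \hat g + \frac{\lambda\hat g - i\xi\cdot\hat f}{|\xi|^2}. \]
Reading off the symbols for $\lambda u$, $|\lambda|^{1/2}\nabla u$, $\nabla^2 u$ and $\nabla\pi$, and using the identity $\xi_\ell\hat g=-i\widehat{\pd_\ell g}$ to trade a $\xi$-factor for a derivative of $g$ whenever a degree-one symbol on $\hat g$ would otherwise appear, each contribution falls into one of three shapes: (a) a heat-type symbol $\lambda^a\xi^\alpha/(\lambda+|\xi|^2)^{a+|\alpha|/2}$ with $0\le a+|\alpha|/2\le 1$, applied to $\hat f$; (b) a degree-$0$ Riesz-type symbol $\xi^\alpha\xi^\beta/|\xi|^{|\alpha|+|\beta|}$, applied to $\hat f$, $\widehat{\nabla g}$, or $|\lambda|^{1/2}\hat g$; and (c) the singular low-frequency symbol $\lambda\,i\xi_j/|\xi|^2$ applied to $\hat g$, which equals $-\widehat{\lambda\pd_j\Delta^{-1}g}$ up to sign.

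Shapes (a) and (b) are bounded and holomorphic in $\lambda\in\tilde\Sigma_{\pi/2-\eps}$ uniformly in $\xi\in\dBR^N$, so Theorem \ref{H^infty} gives joint $\CR$-boundedness in $(\xi,\lambda)$ and Theorem \ref{Rbound} converts this into the $L_q(\BR^N)$-bounds parameterised by $\lambda\in\Sigma_\eps$. This accounts for every term on the right-hand side of (1) and (2) except the $\hW^{-1}_q$ term. The genuine obstacle is shape (c), which is singular at $\xi=0$ and cannot be controlled by any $L_q$-norm of $g$. I handle it through the duality
\[ \|\pd_j\Delta^{-1}g\|_{L_q(\BR^N)} \le \|g\|_{\hW^{-1}_q(\BR^N)}, \]
proved by testing $\pd_j\Delta^{-1}g$ against $\varphi\in\hW^1_{q',0}(\BR^N)$ with $\|\nabla\varphi\|_{L_{q'}}=1$ and integrating by parts; since $\nabla\pd_j\Delta^{-1}\varphi$ is a composition of Riesz transforms of $\nabla\varphi$, it is bounded in $L_{q'}$ by $\|\nabla\varphi\|_{L_{q'}}$. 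Multiplying by $|\lambda|$ produces the missing $|\lambda|\|g\|_{\hW^{-1}_q}$ term in (1).

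For (2), the Laplace transform turns $\lambda$ into $\pd_t+\gamma$, so $\widehat{\lambda\pd_j\Delta^{-1}g}$ becomes $\widehat{\pd_j\Delta^{-1}\pd_t G}+\gamma\widehat{\pd_j\Delta^{-1}G}$, yielding exactly the $\|e^{-\gamma t}(\pd_t G,\gamma G)\|_{L_p(\BR,\hW^{-1}_q)}$ contribution. Uniqueness is immediate: when $f=g=0$ (resp.\ $F=G=0$) the Fourier relations force $\hat u=\hat\pi=0$ pointwise for every $\lambda\in\Sigma_\eps$, and uniqueness of the Laplace transform transports this to the non-stationary problem.
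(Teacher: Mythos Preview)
The paper does not give its own proof of this theorem; it is quoted verbatim as a known result from \cite[Theorem~3.1]{SS12} and used as a black box in Section~\ref{reduction}. So there is no paper proof to compare against. That said, your argument is exactly the standard Fourier-multiplier approach one finds in \cite{SS12}: solve \eqref{R1} algebraically in $\xi$, recognise the Leray-projected heat symbols $\lambda^a\xi^\alpha/(\lambda+|\xi|^2)$ and the Riesz symbols $\xi_j\xi_k/|\xi|^2$ as bounded holomorphic multipliers (hence $\CR$-bounded via Theorem~\ref{H^infty}), and isolate the low-frequency piece $\lambda\, i\xi_j|\xi|^{-2}\hat g$ that forces the $\hW^{-1}_q$ norm on the right. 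The passage from (1) to (2) by Laplace transform and a second invocation of Theorem~\ref{Rbound} in $\tau$ is also the standard route.

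One presentational slip in your duality step: to bound $\|\pd_j\Delta^{-1}g\|_{L_q(\BR^N)}$ you must test against $\varphi\in L_{q'}(\BR^N)$, not against $\varphi\in\hW^1_{q',0}(\BR^N)$ (the latter would give the $\hW^{-1}_q$-norm of $\pd_j\Delta^{-1}g$, which is not what you want). The correct chain is
\[
(\pd_j\Delta^{-1}g,\varphi)=-(g,\pd_j\Delta^{-1}\varphi),
\qquad \psi:=\pd_j\Delta^{-1}\varphi\in\hW^1_{q',0}(\BR^N),
\]
and $\nabla\psi=\pd_k\pd_j\Delta^{-1}\varphi$ are Riesz transforms of $\varphi$ itself (not of $\nabla\varphi$), so $\|\nabla\psi\|_{L_{q'}}\le C\|\varphi\|_{L_{q'}}$. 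With that correction the argument is sound.
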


\subsection{Half space}
Concerning the half space, we first consider the divergence problem 
\begin{alignat}{5}
&\dv v = g&\quad &{\rm in}~\HS,&\quad&v_N|_{\BR^N_0}=0,~\pd_N v_j|_{\BR^N_0}=0~(j=1,\ldots, N-1),\label{R+1}\\
&\dv V = G&\quad &{\rm in}~Q_+,&\quad&V_N|_{Q_0}=0,~\pd_N V_j|_{Q_0}=0~(j=1,\ldots, N-1), \label{R+2}
\end{alignat}
subject to the initial condition $V|_{t=0}=0$. 
\begin{lem}\label{divfree}
Let $1<p,q<\infty$ and $\gamma_0\ge0$. \\
{\rm (1)} For any $g\in \hW^{-1}_q(\HS)\cap W^1_q(\HS)$, problem \eqref{R+1} admits a solution $v\in W^2_q(\HS)$ that satisfies the following estimates: 
\begin{align*}
\|v\|_{L_q(\HS)}&\le C\|g\|_{\hW^{-1}_q(\HS)}, \\
\|\nabla^{j+1} v\|_{L_q(\HS)}&\le C\|\nabla^j g\|_{L_q(\HS)}~(j=0,1). \tag{*}
\end{align*}
{\rm (2)} For any $G\in L_{p,0,\gamma_0}(\BR, W^1_q(\HS))\cap W^1_{p,0,\gamma_0}(\BR, \hW^{-1}_q(\HS))$, problem \eqref{R+2} admits a solution 
\[V\in W^1_{p,0,\gamma_0}(\BR, L_q(\HS))\cap L_{p,0,\gamma_0}(\BR, W^2_q(\HS))\] 
that satisfies the estimates: 
\begin{align*}
\|e^{-\gamma t}\pd_t V\|_{L_p(\BR, L_q(\HS))} &\le C\|e^{-\gamma t}\pd_t G\|_{L_p(\BR, \hW^{-1}_q(\HS))}, \\
\|e^{-\gamma t}\Lambda^{1/2}_\gamma V\|_{L_p(\BR, L_q(\HS))}&\le C\|e^{-\gamma t}\Lambda^{-1/2}_\gamma \pd_t G\|_{L_p(\BR, \hW^{-1}_q(\HS))}, \\
\|e^{-\gamma t} \nabla V\|_{L_p(\BR, L_q(\HS))} &\le C\|e^{-\gamma t} G\|_{L_p(\BR, L_q(\HS))}, \tag{*}\\
\|e^{-\gamma t}\Lambda^{1/2}_\gamma \nabla V\|_{L_p(\BR, L_q(\HS))}&\le C\|e^{-\gamma t}\Lambda^{1/2}_\gamma G\|_{L_p(\BR, L_q(\HS))}, \tag{*}\\
\|e^{-\gamma t}\nabla^2 V\|_{L_p(\BR, L_q(\HS))} &\le C\|e^{-\gamma t}\nabla G\|_{L_p(\BR, L_q(\HS))}\tag{*}
\end{align*}
for any $\gamma\ge\gamma_0$. 
If $\gamma\ge\gamma_0>0$, we have 
\begin{align*} 
\|e^{-\gamma t} \Lambda_\gamma^{-1/2}\pd_t G \|_{L_p(\BR, \hW^{-1}_q(\HS))} &\le C\|e^{-\gamma t} \pd_t G \|_{L_p(\BR, \hW^{-1}_q(\HS))}, \\
\|e^{-\gamma t} G\|_{L_p(\BR, L_q(\HS))} &\le C\|e^{-\gamma t} \Lambda_\gamma^{1/2}G\|_{L_p(\BR, L_q(\HS))}. 
\end{align*}
\end{lem}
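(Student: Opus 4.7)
The strategy is to construct $v$ as a pure gradient: $v:=\nabla\Phi$, where $\Phi$ solves the Neumann--Poisson problem
\[
\Delta\Phi=g\ \ \text{in}\ \HS,\qquad \pd_N\Phi\big|_{\bHS}=0.
\]
Such a $\Phi$ exists by even reflection: setting $\tilde g(x',x_N):=g(x',|x_N|)$ on $\BR^N$, define $\tilde\Phi$ through $\nabla\tilde\Phi=\CF^{-1}((i\xi/|\xi|^2)\widehat{\tilde g})$ and restrict $\Phi:=\tilde\Phi|_{\HS}$. The $x_N\mapsto -x_N$ symmetry of $\Delta$ and $\tilde g$ forces $\tilde\Phi$ to be even in $x_N$, so $\pd_N\tilde\Phi$ is odd and vanishes on $\bHS$. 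Setting $v:=\nabla\Phi$ then yields $\dv v=\Delta\Phi=g$ in $\HS$, $v_N|_{\bHS}=\pd_N\Phi|_{\bHS}=0$, and $\pd_N v_j|_{\bHS}=\pd_j(\pd_N\Phi)|_{\bHS}=0$ for $j<N$, since the tangential derivative of a function identically zero on the flat boundary vanishes there.

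For the estimates marked $(*)$, the Fourier symbol sending $\tilde g$ to $\nabla^{j+2}\tilde\Phi$ is $\xi^\alpha/|\xi|^2$ with $|\alpha|=j+2$, a second-order Riesz-type multiplier bounded on $L_q(\BR^N)$ by Mikhlin's theorem. Since the even extension is bounded $L_q(\HS)\to L_q(\BR^N)$, and sends $W^1_q(\HS)$ into $W^1_q(\BR^N)$ (its weak derivatives being the even extensions of $\pd_j g$ for $j<N$ and the odd extension of $\pd_N g$), this gives $\|\nabla^{j+1}v\|_{L_q(\HS)}\le\|\nabla^{j+2}\tilde\Phi\|_{L_q(\BR^N)}\le C\|\nabla^j g\|_{L_q(\HS)}$ for $j=0,1$. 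The remaining bound $\|v\|_{L_q(\HS)}\le C\|g\|_{\hW^{-1}_q(\HS)}$ is the main technical obstacle: since even extension does not respect the negative Sobolev norm, we instead analyze the explicit half-space Neumann Green's function $G_N(x,y)=N(x-y)+N(x-y^*)$ with $y^*=(y',-y_N)$ the mirror point. A standard Bogovskii-type argument represents $g=\dv\bff$ with $\bff\in L_q(\HS)^N$ and $\|\bff\|_{L_q}\lesssim\|g\|_{\hW^{-1}_q}$ (by Hahn--Banach applied to $\nabla:\hW^1_{q',0}(\HS)\hookrightarrow L_{q'}(\HS)^N$); integrating by parts in $v(x)=-\int_\HS\nabla_x G_N(x,y)g(y)\,dy$ and exploiting the Neumann property of $G_N$ in the $y$-variable (which absorbs the boundary contribution through an appropriate choice of representative $\bff$) identifies $v$ as a half-space Calder\'on--Zygmund operator applied to $\bff$, giving the desired bound.

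For the non-stationary problem (2), apply the construction slicewise in $t$: $V(\cdot,t):=\nabla\Phi(\cdot,t)$ with $\Delta\Phi(\cdot,t)=G(\cdot,t)$ in $\HS$ and $\pd_N\Phi(\cdot,t)|_{\bHS}=0$. Linearity and $t$-independence of the spatial solution operator $G\mapsto\nabla\Phi$ make it commute with $\pd_t$, with the Bessel operator $\Lambda^{1/2}_\gamma$ (a Fourier multiplier in $t$ alone), and with the weight $e^{-\gamma t}$. Hence the time-weighted $L_p$--$L_q$ bounds reduce to applying the stationary $(*)$-estimates of (1) slicewise to $\pd_t G$, $\Lambda^{1/2}_\gamma G$, $G$, etc., combined with the fact that $|\lambda|/\lambda$ is a bounded multiplier on $\gamma+i\BR$ for $\gamma\ge\gamma_0\ge 0$ (yielding $\|\Lambda^{1/2}_\gamma G\|\lesssim\|\Lambda^{-1/2}_\gamma\pd_t G\|$ in the natural norms, via Mikhlin for $\gamma_0>0$ and via the Hilbert transform when $\gamma_0=0$). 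The final two listed inequalities, valid only for $\gamma\ge\gamma_0>0$, are the trivial pointwise bound $|\lambda|^{-1/2}\le\gamma_0^{-1/2}$ lifted to $L_p(\BR)$.
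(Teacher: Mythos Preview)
Your construction of $v$ is exactly the paper's: the paper writes $v_j=\CF_\xi^{-1}[-i\xi_j|\xi|^{-2}\CF_x[g^e]]$, which is precisely the gradient of the even-reflection Newton potential, i.e.\ your $\nabla\Phi$ with the Neumann condition. Your symmetry argument for the boundary conditions ($v_N$ odd in $x_N$, hence vanishing at $x_N=0$; $\pd_Nv_j$ odd for $j<N$) is correct and in fact tidier than the paper's explicit Fourier-integral verification. The $(*)$-estimates via Mikhlin on $\BR^N$ after even extension match the paper's reference to Shibata--Shimizu.

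The substantive difference is the estimate $\|v\|_{L_q(\HS)}\le C\|g\|_{\hW^{-1}_q(\HS)}$. The paper does not use a Bogovskii representation or the Green's function at all; instead it argues by duality directly. For $\phi\in C_0^\infty(\HS)$ it computes, using the parity of $v_j$, that $(v_j,\phi)_{\HS}=(g,\Psi_j)_{\HS}$ with $\Psi_j=\CF^{-1}[-i\xi_j|\xi|^{-2}\CF\phi^e]$ for $j<N$ (and the analogous formula with $\phi^o$ for $j=N$), and then bounds $|(g,\Psi_j)|\le\|g\|_{\hW^{-1}_q(\HS)}\|\nabla\Psi_j\|_{L_{q'}(\HS)}\le C\|g\|_{\hW^{-1}_q(\HS)}\|\phi\|_{L_{q'}(\HS)}$, the last step being Mikhlin again. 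Your route via Hahn--Banach and the Neumann Green's function is genuinely different, but as written it has a gap: the identity $g=\dv\bff$ you obtain holds only when tested against functions in $\hW^1_{q',0}(\HS)$, i.e.\ those vanishing on $\bHS$, whereas $y\mapsto\nabla_xG_N(x,y)$ does \emph{not} vanish on $\bHS$. The parenthetical ``which absorbs the boundary contribution through an appropriate choice of representative $\bff$'' is unsupported---the non-uniqueness in $\bff$ is exactly the addition of $L_q$ divergence-free fields, and there is no evident mechanism by which this kills the boundary integral. The paper's duality computation sidesteps this entirely by never integrating by parts against the kernel.

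For part (2), your slicewise-in-$t$ argument agrees with the paper's, and your handling of the last two inequalities (boundedness of $\Lambda_\gamma^{-1/2}$ for $\gamma\ge\gamma_0>0$) matches the paper's one-line remark.
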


\begin{proof}
Throughout this paper, let $f^e$ and $F^e$ be even extensions to $\BR^N$ and let $f^o$ and $F^o$ be odd extensions to $\BR^N$; 
 \begin{alignat*}{3}
 &f^e(x):=\begin{cases}f(x)&{\rm for}~x_N>0\\f(x',-x_N)&{\rm for}~x_N<0\end{cases}, &\qquad &F^e(x, t):=\begin{cases}F(x, t)&{\rm for}~x_N>0\\F(x',-x_N, t)&{\rm for}~x_N<0\end{cases}, \\
&f^o(x):=\begin{cases}f(x)&{\rm for}~x_N>0\\-f(x',-x_N)&{\rm for}~x_N<0\end{cases}, &\qquad &F^e(x,t):=\begin{cases}F(x, t)&{\rm for}~x_N>0\\-F(x',-x_N, t)&{\rm for}~x_N<0. \end{cases}
\end{alignat*} 
Then 
\begin{alignat*}{3}
&v_j(x)=\CF^{-1}_\xi \left[\frac{-i\xi_j \CF_x[g^e](\xi)}{|\xi|^2}\right](x),&\qquad &v(x)=(v_1,\ldots,v_N)(x), \\
&V_j(x, t)=\CF^{-1}_\xi \left[\frac{-i\xi_j \CF_x[G^e](\xi, t)}{|\xi|^2}\right](x,t),&\qquad &V(t,x)=(V_1,\ldots,V_N)(t,x)
\end{alignat*}
are solutions. 
In fact, the same proof in \cite[Lem 4.1]{SS12} works and they satisfy for the estimates (*). 
We need to prove boundary conditions and other estimates. 

To prove boundary conditions, we calculate as follows. 
\begin{align*}
v_N(x',0) &= \frac{1}{(2\pi)^N} \int_{\BR^{N-1}}e^{ix'\cdot\xi'}\left(\int_{-\infty}^\infty \frac{-i\xi_N \CF_x[g^e](\xi)}{|\xi|^2}d\xi_N\right)d\xi'\\
\CF_{x'}[v_N|_{x_N=0}](\xi') &= \frac{-i}{(2\pi)^N} \int_{-\infty}^\infty \frac{\xi_N}{|\xi|^2}\CF_x[g^e](\xi)d\xi_N\\
&=\frac{-i}{(2\pi)^N} \int_{-\infty}^\infty \frac{\xi_N}{|\xi|^2}\left(\int_{-\infty}^\infty e^{-iy_N\xi_N}[\CF_{x'}g^e](\xi',y_N)dy_N\right)d\xi_N\\
&=\frac{-i}{(2\pi)^N} \int_{-\infty}^\infty \frac{\xi_N}{|\xi|^2}\left(\int_0^\infty (e^{-iy_N\xi_N}+e^{iy_N\xi_N})[\CF_{x'}g](\xi',y_N)dy_N\right)d\xi_N\\
&=\frac{-i}{(2\pi)^N} \int_0^\infty \left(\int_{-\infty}^\infty \frac{\xi_N}{|\xi|^2}(e^{-iy_N\xi_N}+e^{iy_N\xi_N})d\xi_N\right)[\CF_{x'}g](\xi',y_N)dy_N\\
&=0
\end{align*}
since $\xi_N\mapsto\frac{\xi_N}{|\xi|^2}(e^{-iy_N\xi_N}+e^{iy_N\xi_N})$ is an odd function. 
Therefore $v_N|_{\BR^N_0}=0$. 
Another boundary condition $\pd_N v_j|_{\BR^N_0}=0$ is also proved similarly. 
To estimate $\|v\|_{L_q(\HS)}$, we divide into two cases; $j\neq N$ and $j=N$. 
If $j\neq N$, then, from even property of $v_j$ and $\CF_\xi^{-1} (\frac{-i\xi_j\CF_x [\phi^e](\xi)}{|\xi|^2})$ on $x_N$, for all $\phi\in C^\infty_0(\HS)$, 
\[(v_j, \phi)_{\HS} = \frac{1}{2}(v_j, \phi^e)_{\BR^N} = \frac{1}{2}\left(g^e, \CF_\xi^{-1} \left[\frac{-i\xi_j\CF_x [\phi^e](\xi)}{|\xi|^2}\right]\right)_{\BR^N} = \left(g, \CF_\xi^{-1} \left[\frac{-i\xi_j\CF_x [\phi^e](\xi)}{|\xi|^2}\right]\right)_{\HS}, \]
where $(\cdot, \cdot)$ is the standard $L^2$-inner product, and therefore
\[|(v_j, \phi)_{\HS}|\le \|g\|_{\hW^{-1}_q(\HS)} \|\nabla \CF_\xi^{-1} \left[\frac{-i\xi_j\CF_x [\phi^e](\xi)}{|\xi|^2}\right]\|_{L_{q'}(\HS)} \le C\|g\|_{\hW^{-1}_q(\HS)} \|\phi\|_{L_{q'}(\HS)}.\]
On the other hand, if $j=N$, then, from odd property of $v_N$ and even property of $\CF_\xi^{-1} (\frac{-i\xi_N\CF_x [\phi^o](\xi)}{|\xi|^2})$ on $x_N$, for all $\phi\in C^\infty_0(\HS)$, 
\[(v_N, \phi)_{\HS} = \frac{1}{2}(v_N, \phi^o)_{\BR^N} = \frac{1}{2}\left(g^e, \CF_\xi^{-1} \left[\frac{-i\xi_N\CF_x [\phi^o](\xi)}{|\xi|^2}\right]\right)_{\BR^N} = \left(g, \CF_\xi^{-1} \left[\frac{-i\xi_N\CF_x [\phi^o](\xi)}{|\xi|^2}\right]\right)_{\HS}, \]
and therefore
\[|(v_N, \phi)_{\HS}|\le \|g\|_{\hW^{-1}_q(\HS)} \|\nabla \CF_\xi^{-1} \left[\frac{-i\xi_N\CF_x [\phi^o](\xi)}{|\xi|^2}\right]\|_{L_{q'}(\HS)} \le C\|g\|_{\hW^{-1}_q(\HS)} \|\phi\|_{L_{q'}(\HS)}, \]
which means the desired estimate. 
By same argument, we get $V_N|_{Q_0}=\pd_N V_j|_{Q_0}=0$ and the estimate for $V$. 
For the remained estimates, we use $\Lambda_\gamma^{-1/2}$ is a bounded linear operator when $\gamma\ge\gamma_0>0$. 
\end{proof}

Setting $u=v+w, \tf=f-(\lambda v-\Delta v)$ and $U=V+W, \tF=F-(\pd_t V -\Delta V)$, we would like to find $(w,\pi), (W, \Pi)$ such that 
\begin{equation}\label{resolvent Stokes g=0}\left\{\begin{aligned}
\lambda w -  \Delta w + \nabla \pi = \tf  &\quad \text{in}~\HS, \\
\dv w = 0&\quad \text{in}~\HS, 
\end{aligned}\right.
\end{equation}
with 
\begin{alignat*}{3}
&\text{(Dirichlet)}\begin{cases} w_j = h_j-v_j=:\th_j^D\quad(j=1,\ldots, N-1),\\w_N=h_N=:\th_N^D,\end{cases}\\
&\text{(Neumann)}\begin{cases}-(\pd_N w_j + \pd_j w_N) = h_j+ \pd_j v_N=:\th_j^N\quad(j=1,\ldots, N-1),\\ -(2 \pd_N w_N - \pi) = h_N + 2\pd_N v_N=:\th_N^N ,\end{cases}\\
&\text{(Robin)}\begin{cases} \alpha w_j - \beta \pd_N w_j = h_j  - \alpha v_j=:\th_j^R\quad(j=1,\ldots, N-1),\\w_N = h_N=:\th_N^R, \\\end{cases}
\end{alignat*} 
and 
\begin{equation}\label{non-stationary Stokes G=0}\left\{\begin{aligned}
\pd_t W-  \Delta W + \nabla \Pi = \tF  &\quad \text{in}~Q_+, \\
\dv W = 0&\quad \text{in}~Q_+, \\
W|_{t=0}=0 &, 
\end{aligned}\right.
\end{equation}
with 
\begin{alignat*}{3}
&\text{(Dirichlet)}\begin{cases} W_j = H_j-V_j=:\tH_j^D\quad(j=1,\ldots, N-1),\\W_N=H_N=:\tH_N^D,\end{cases}\\
&\text{(Neumann)}\begin{cases}-(\pd_N W_j + \pd_j W_N) = H_j + \pd_j V_N=:\tH_j^N\quad(j=1,\ldots, N-1),\\ -(2 \pd_N W_N - \Pi) = H_N + 2\pd_N V_N=:\tH_N^N,\end{cases}\\
&\text{(Robin)}\begin{cases} \alpha W_j - \beta \pd_N W_j = H_j - \alpha V_j=:\tH_j^R\quad(j=1,\ldots, N-1),\\W_N = H_N=:\tH_N^R. \\\end{cases}
\end{alignat*}
By Lemma \ref{divfree}, we have 
\begin{align*}
\|\tf\|_{L_q(\HS)}&\le \|f\|_{L_q(\HS)} + C(|\lambda|\|g\|_{\hW^{-1}_q(\HS)} + \|\nabla g\|_{L_q(\HS)})\\
\|e^{-\gamma t}\tF\|_{L_p(\BR, L_q(\HS))}&\le \|e^{-\gamma t} F\|_{L_p(\BR, L_q(\HS))} + C(\|e^{-\gamma t}\pd_t G \|_{L_p(\BR, \hW^{-1}_q(\HS))} + \|e^{-\gamma t} \nabla G\|_{L_p(\BR, L_q(\HS))}), 
\end{align*}
and for Dirichlet boundary condition, 
\begin{align*}
&\|(|\lambda|\th^D, |\lambda|^{1/2}\nabla \th^D, \nabla^2 \th^D, |\lambda||\nabla'|^{-1}\pd_N \th_N)\|_{L_q(\HS)}\\ 
&\le \|(|\lambda|h, |\lambda|^{1/2}\nabla h, \nabla^2 h, |\lambda||\nabla'|^{-1}\pd_N h_N)\|_{L_q(\HS)} + C(\|(|\lambda|^{1/2}g, \nabla g)\|_{L_q(\HS)} + |\lambda|\|g\|_{\hW^{-1}_q(\HS)}), \\
&\|e^{-\gamma t}(\pd_t \tH^D, \Lambda_\gamma^{1/2}\nabla \tH^D, \nabla^2 \tH^D, \pd_t (|\nabla'|^{-1}\pd_N \tH_N))\|_{L_p(\BR, L_q(\HS))}\\
&\le \|e^{-\gamma t}(\pd_t H, \Lambda_\gamma^{1/2}\nabla H, \nabla^2 H, \pd_t (|\nabla'|^{-1}\pd_N H_N))\|_{L_p(\BR, L_q(\HS))} \\
&\qquad \qquad + C(\|e^{-\gamma t}(\Lambda_\gamma^{1/2}G, \nabla G)\|_{L_p(\BR, L_q(\HS))}+ \|e^{-\gamma t}\pd_t G\|_{L_p(\BR, \hW^{-1}_q(\HS))}), 
\end{align*}
and for Neumann boundary condition, 
\begin{align*}
\|(|\lambda|^{1/2}\th^N, \nabla \th^N)\|_{L_q(\HS)} &\le \|(|\lambda|^{1/2}h, \nabla h)\|_{L_q(\HS)} + C\|(|\lambda|^{1/2}g, \nabla g)\|_{L_q(\HS)}, \\
\|e^{-\gamma t}(\Lambda_\gamma^{1/2}\tH^N, \nabla \tH^N)\|_{L_p(\BR, L_q(\HS))}&\le \|e^{-\gamma t} (\Lambda_\gamma^{1/2}H, \nabla H)\|_{L_p(\BR, L_q(\HS))} + C\|e^{-\gamma t}(\Lambda_\gamma^{1/2}G, \nabla G)\|_{L_p(\BR, L_q(\HS))}, 
\end{align*}    
and for Robin boundary condition, 
\begin{align*}
&\|(|\lambda|^{1/2}\th^{'R}, \nabla \th^{'R}, |\lambda|\th_N^R, \nabla^2 \th_N^R, |\lambda||\nabla'|^{-1}\pd_N \th_N^R)\|_{L_q(\HS)} \\
&\le \|(|\lambda|^{1/2}h^{'R}, \nabla h^{'R}, |\lambda|h_N^R, \nabla^2 h_N^R, |\lambda||\nabla'|^{-1}\pd_N h_N^R)\|_{L_q(\HS)} + C \frac{\alpha}{|\lambda|^{1/2}}(\||\lambda|^{1/2} g\|_{L_q(\HS)} + |\lambda|\|g\|_{\hW^{-1}_q(\HS)}), \\
&\|e^{-\gamma t}(\Lambda_\gamma^{1/2}\tH^{'R}, \nabla \tH^{'R}, \pd_t \tH_N^R, \nabla^2 \tH_N^R, \pd_t (|\nabla'|^{-1}\pd_N \tH_N^R))\|_{L_p(\BR, L_q(\HS))}\\
&\le \|e^{-\gamma t} (\Lambda_\gamma^{1/2}H', \nabla H', \pd_t H_N^R, \nabla^2 H_N^R, \pd_t (|\nabla'|^{-1} \pd_N H_N^R))\|_{L_p(\BR, L_q(\HS))} \\
&\qquad \qquad + \alpha C(\|e^{-\gamma t} G\|_{L_p(\BR, L_q(\HS))}+ \|e^{-\gamma t}\Lambda_\gamma^{-1/2} \pd_t G \|_{L_p(\BR, \hW^{-1}_q(\HS))}) \\
&\hspace{-3mm}\overset{{\rm if} \gamma_0>0}{\le} \|e^{-\gamma t} (\Lambda_\gamma^{1/2}H', \nabla H', \pd_t H_N^R, \nabla^2 H_N^R, \pd_t (|\nabla'|^{-1}\pd_N H_N^R))\|_{L_p(\BR, L_q(\HS))} \\
&\qquad \qquad + C(\|e^{-\gamma t}\Lambda_\gamma^{1/2}G\|_{L_p(\BR, L_q(\HS))} + \|e^{-\gamma t}\pd_t G \|_{L_p(\BR, \hW^{-1}_q(\HS))}). 
\end{align*}
From above observations, we consider the case $g=0$, $G=0$ below. 

Second, we reduce the case $f=0$, $F=0$. 
For given $f\in L_q(\HS)$, $F\in L_{p,0, \gamma_0}(\BR, L_q(\HS))$, we define $\iota f:=(f_1^o, \ldots, f_{N-1}^o, f_N^e)$, $\iota F:=(F_1^o, \ldots, F_{N-1}^o, F_N^e)$ which are the functions on the whole space. 
Let $P(\xi)=(P_{j,k})_{jk}=(\delta_{jk}-\xi_j\xi_k|\xi|^{-2})_{jk}$ be the Helmholtz decomposition. 
Then functions 
\begin{alignat*}{3}
v(x)&=\CF^{-1}_\xi \left[\frac{P(\xi) \CF_x [\iota f](\xi)}{\lambda+|\xi|^2}\right](x), &\quad
\tau(x)&= -\CF^{-1}_\xi\left[\frac{i\xi\cdot\CF_x [\iota f](\xi)}{|\xi|^2}\right](x), \\
V(x,t)&= \CL_\lambda \CF^{-1}_\xi \left[\frac{P(\xi) \CF_x \CL[\iota F](\xi, \lambda)}{\lambda+|\xi|^2}\right](x,t), &\quad 
\Upsilon(x,t)&= -\CL_\lambda\CF^{-1}_\xi\left[\frac{i\xi\cdot\CF_x\CL[\iota F](\xi, \lambda)}{|\xi|^2}\right](x,t)
\end{alignat*}
satisfy 
\begin{align*}
&(v, \tau)\in W^2_q(\BR^N)\times \hW^1_q(\BR^N), \\
&\lambda v - \Delta v + \nabla \tau = \iota f,\quad \dv v=0 \quad {\rm in}~\BR^N, \\
&\|(|\lambda|v, |\lambda|^{1/2}\nabla v, \nabla^2 v, \nabla \tau)\|_{L_q(\BR^N)} \le C\|\iota f\|_{L_q(\BR^N)} \le 2C\|f\|_{L_q(\HS)}
\end{align*}
and 
\begin{align*}
&V\in W^1_{p,0, \gamma_0}(\BR, L_q(\BR^N))\cap L_{p,0, \gamma_0}(\BR, W^2_q(\BR^N)),\quad \Upsilon\in L_{p,0, \gamma_0}(\BR, \hW^1_q(\BR^N)), \\
&\pd_t V  - \Delta V + \nabla \Upsilon= \iota F,\quad \dv V=0 \quad {\rm in}~\BR^N\times (0,\infty),\quad V|_{t=0}=0, \\
&\|e^{-\gamma t}(\pd_t V, \gamma V, \Lambda_\gamma^{1/2} \nabla V, \nabla^2 V, \nabla \Upsilon)\|_{L_p(\BR, L_q(\BR^N))} \le C\|e^{-\gamma t} \iota F\|_{L_p(\BR, L_q(\BR^N))} \le 2C\|e^{-\gamma t}F\|_{L_p(\BR, L_q(\HS))}, 
\end{align*}
for any $1<p,q<\infty$, $\gamma\ge\gamma_0\ge0$ and $\lambda\in\Sigma_\eps$ with $0<\eps<\pi/2$. 
Moreover we shall prove  $v_j(x',0)=0$ and $V_j(x',0,t)=0$ for $j=1, \ldots, N-1$ as follows; 
\begin{align*}
&(2\pi)^N \CF_{x'}[v_j|_{x_N=0}](\xi') \\
&= \sum_{k=1, k\neq j}^{N-1}\int_{-\infty}^\infty \frac{P_{j,k}(\xi)\CF_x[f^o_k](\xi)}{\lambda +|\xi|^2} d\xi_N + \int_{-\infty}^\infty \frac{P_{j,j}(\xi)\CF_x[f^o_j](\xi)}{\lambda+|\xi|^2} d\xi_N +  \int_{-\infty}^\infty \frac{P_{j,N}(\xi)\CF_x[f^e_N](\xi)}{\lambda+|\xi|^2} d\xi_N \\
&=\sum_{k=1, k\neq j}^{N-1}\int_{-\infty}^\infty \frac{-\xi_j \xi_k}{(\lambda+|\xi|^2)|\xi|^2}\left(\int_0^\infty (e^{-iy_N\xi_N}- e^{iy_N\xi_N})[\CF_{x'}f_k](\xi',y_N)dy_N\right)d\xi_N \\
&\qquad + \int_{-\infty}^\infty \frac{1-\xi_j^2 |\xi|^{-2}}{\lambda+|\xi|^2}\left(\int_0^\infty (e^{-iy_N\xi_N}- e^{iy_N\xi_N})[\CF_{x'}f_j](\xi',y_N)dy_N\right)d\xi_N \\
&\qquad + \int_{-\infty}^\infty \frac{-\xi_j \xi_N}{(\lambda+|\xi|^2)|\xi|^2}\left(\int_0^\infty (e^{-iy_N\xi_N}+e^{iy_N\xi_N})[\CF_{x'}f_N](\xi',y_N)dy_N\right)d\xi_N\\
&=-\sum_{k=1, k\neq j}^{N-1}\int_0^\infty \left(\int_{-\infty}^\infty \frac{e^{-iy_N\xi_N}- e^{iy_N\xi_N}}{(\lambda+|\xi|^2)|\xi|^2}d\xi_N\right) \xi_j \xi_k[\CF_{x'}f_k](\xi',y_N)dy_N\\
&\qquad + \int_0^\infty \left(\int_{-\infty}^\infty \frac{e^{-iy_N\xi_N}- e^{iy_N\xi_N}}{\lambda+|\xi|^2}d\xi_N\right) [\CF_{x'}f_j](\xi',y_N)dy_N\\
&\qquad - \int_0^\infty \left(\int_{-\infty}^\infty \frac{e^{-iy_N\xi_N}- e^{iy_N\xi_N}}{(\lambda+|\xi|^2)|\xi|^2}d\xi_N\right) \xi_j^2[\CF_{x'}f_j](\xi',y_N)dy_N\\
&\qquad - \int_0^\infty \left(\int_{-\infty}^\infty \frac{\xi_N (e^{-iy_N\xi_N}+e^{iy_N\xi_N})}{(\lambda+|\xi|^2)|\xi|^2}d\xi_N\right) \xi_j[\CF_{x'}f_N](\xi',y_N)dy_N\\
&=0, 
\end{align*}
where we used odd property on $\xi_N$ for all integrals. 
We also see $\pd_N v_N=\tau=0$ and $\pd_N V_N=\Upsilon=0$ on the boundary in the same manner. 
See also in \cite[p.399]{SS03}. 

Setting $u=v+w$, $\pi=\tau+\kappa$ in \eqref{resolvent Stokes} with $g=0$ and $U=V+W$, $\Pi=\Upsilon+\Xi$ in \eqref{non-stationary Stokes} with $G=0$ and $U_0=0$, respectively, we have 
\begin{equation}\label{resolvent Stokes f=g=0}\left\{\begin{aligned}
\lambda w -  \Delta w + \nabla \kappa = 0  &\quad \text{in}~\HS, \\
\dv w = 0&\quad \text{in}~\HS, 
\end{aligned}\right.
\end{equation}
with 
\begin{alignat*}{3}
&\text{(Dirichlet)}~w= h-v=:\barh^D, \\
&\text{(Neumann)}\begin{cases}-(\pd_N w_j + \pd_j w_N) = h_j+(\pd_N v_j + \pd_j v_N)=:\barh_j^N\quad(j=1,\ldots, N-1),\\ -(2 \pd_N w_N - \kappa) = h_N=:\barh_N^N ,\end{cases}\\
&\text{(Robin)}\begin{cases} \alpha w_j - \beta \pd_N w_j = h_j  + \beta \pd_N v_j=:\barh_j^R\quad(j=1,\ldots, N-1),\\w_N = h_N - v_N=:\barh_N^R \\\end{cases}
\end{alignat*} 
and 
\begin{equation}\label{non-stationary Stokes F=G=0}\left\{\begin{aligned}
\pd_t W-  \Delta W + \nabla \Xi = 0  &\quad \text{in}~Q_+, \\
\dv W = 0&\quad \text{in}~Q_+, \\
W|_{t=0}=0 &, 
\end{aligned}\right.
\end{equation}
with 
\begin{alignat*}{3}
&\text{(Dirichlet)}~W= H-V=:\barH^D, \\
&\text{(Neumann)}\begin{cases}-(\pd_N W_j + \pd_j W_N) = H_j + (\pd_N V_j + \pd_j V_N)=:\barH_j^N\quad(j=1,\ldots, N-1),\\ -(2 \pd_N W_N - \Xi) = H_N =:\barH_N^N,\end{cases}\\
&\text{(Robin)}\begin{cases} \alpha W_j - \beta \pd_N W_j = H_j + \beta \pd_N V_j=:\barH_j^R\quad(j=1,\ldots, N-1),\\W_N = H_N - V_N =:\barH^R_N. \\\end{cases}
\end{alignat*}
Here we have 
\begin{align*}
&\|(|\lambda|\barh^D, \nabla^2 \barh^D, |\lambda||\nabla'|^{-1}\pd_N\barh_N^D)\|_{L_q(\HS)}\le \|(|\lambda|h, \nabla^2 h, |\lambda||\nabla'|^{-1}\pd_N h_N^D)\|_{L_q(\HS)} + C\|f\|_{L_q(\HS)}, \\
&\|e^{-\gamma t}(\pd_t \barH^D, \nabla^2 \barH^D, \pd_t(|\nabla'|^{-1}\pd_N\barH_N^D))\|_{L_p(\BR, L_q(\HS))}\\
&\qquad \qquad \le \|e^{-\gamma t}(\pd_t H, \nabla^2 H, \pd_t(|\nabla'|^{-1}\pd_N H_N^D))\|_{L_p(\BR, L_q(\HS))}+ C\|e^{-\gamma t}F\|_{L_p(\BR, L_q(\HS))}, \\
&\|(|\lambda|^{1/2}\barh^N, \nabla \barh^N)\|_{L_q(\HS)}\le \|(|\lambda|^{1/2} h, \nabla h)\|_{L_q(\HS)} + C\|f\|_{L_q(\HS)}, \\
&\|e^{-\gamma t}(\Lambda_\gamma^{1/2}\barH^N, \nabla \barH^N)\|_{L_p(\BR, L_q(\HS))}\le \|e^{-\gamma t}(\Lambda_\gamma^{1/2} H, \nabla H)\|_{L_p(\BR, L_q(\HS))} + C\|e^{-\gamma t}F\|_{L_p(\BR, L_q(\HS))},  \\
&\|(|\lambda|^{1/2}\barh'^R, \nabla \barh'^R, |\lambda|\barh_N^R, \nabla^2 \barh_N^R, |\lambda||\nabla'|^{-1}\pd_N\barh_N^R)\|_{L_q(\HS)}\\
&\qquad \qquad \le \|(|\lambda|^{1/2}h', \nabla h', |\lambda|h_N, \nabla^2 h_N, |\lambda|(|\nabla'|^{-1}\pd_N h_N^R)\|_{L_q(\HS)} + C \|f\|_{L_q(\HS)}, \\
&\|e^{-\gamma t}(\Lambda_\gamma^{1/2}\barH'^R, \nabla \barH'^R, \pd_t \barH_N, \nabla^2\barH_N, \pd_t (|\nabla'|^{-1}\pd_N \barH_N^R)\|_{L_p(\BR, L_q(\HS))}\\
&\le \|e^{-\gamma t}(\Lambda_\gamma^{1/2}H', \nabla H', \pd_t H_N, \nabla^2 H_N, \pd_t (|\nabla'|^{-1}\pd_N H_N^R)\|_{L_p(\BR, L_q(\HS))}+ C\|e^{-\gamma t}F\|_{L_p(\BR, L_q(\HS))}, 
\end{align*}
where we set $\barh'^R=(\barh_j^R)_{j=1}^{N-1}$, $\barH'^R=(\barH_j^R)_{j=1}^{N-1}$ in Robin boundary. 
We used the following estimates; 
\begin{align*}
\||\lambda|(|\nabla'|^{-1}\pd_N v_N)\|_{L_q(\HS)} &\le C\|f\|_{L_q(\HS)}, \\ 
\|\pd_t (|\nabla'|^{-1}\pd_N V_N)\|_{L_p(\BR, L_q(\HS))} &\le C\|F\|_{L_p(\BR, L_q(\HS))}, 
\end{align*}
whose proof is given in \ref{appX}. 

In this section we conclude that $f=g=0$ and $F=G=0$ are enough to consider in theorems \ref{resolventthm} and \ref{maxregthm}. 

\section{Solution formulas from boundary data}\label{formula}

We give the solution of the resolvent problem \eqref{resolvent Stokes} with $f=g=0$ and $\lambda\in \Sigma_\eps$ by Fourier multipliers for each boundary condition. 
We apply partial Fourier transform with respect to tangential direction $x'\in\BR^{N-1}$ so that we use the notations 
\begin{align*}
\hv(\xi', x_N):= &\CF_{x'}v(\xi', x_N):=\int_{\BR^{N-1}} e^{-ix'\cdot\xi'} v(x', x_N)dx', \\
&\CF^{-1}_{\xi'}w(x',x_N) = \frac{1}{(2\pi)^{N-1}} \int_{\BR^{N-1}} e^{ix'\cdot\xi'} w(\xi', x_N)d\xi' 
\end{align*}
for functions $v, w:\HS\to \BC$. 
In this section and section \ref{proof} the index $j$ runs from $1$ to $N-1$ if we do not indicate. 
We use $A:=\sqrt{\sum_{j=1}^{N-1}\xi_j^2}$ and $B:= \sqrt{\lambda + A^2}$ with positive real parts. 

\subsection{Dirichlet boundary}
In this subsection we focus on Dirichlet boundary condition. 
By partial Fourier transform, we have the following second order ordinary differential equations; 
\begin{equation*}\left\{
\begin{aligned}
(\lambda + |\xi'|^2-\pd_N^2)\hu_j + i\xi_j \hpi =0\quad &\text{in}~ x_N>0, \\
(\lambda + |\xi'|^2-\pd_N^2)\hu_N + \pd_N \hpi =0\quad &\text{in}~x_N>0, \\
\sum_{j=1}^{N-1} i\xi_j\hu_j  + \pd_N \hu_N  = 0\quad &\text{in}~x_N>0, \\
\hu =\hh\quad &\text{on}~x_N=0. 
\end{aligned}\right.
\end{equation*}
We find the solution of the form 
\[\hu_j(\xi', x_N) = \alpha_j  e^{-Ax_N} + \beta_j  e^{-B x_N}~(j=1,\ldots, N), \qquad \hpi(\xi',x_N)=\gamma  e^{- A x_N}. \]
Then, the equations are 
\begin{equation*}\left\{
\begin{aligned}
\{\alpha_j(B^2 - A^2) + i\xi_j \gamma\} e^{-A x_N} = 0, \\
\{\alpha_N(B^2 - A^2)  - A\gamma \} e^{-A x_N} = 0, \\
(\sum_{j=1}^{N-1}i\alpha_j \xi_j - A\alpha_N)e^{-Ax_N} + (\sum_{j=1}^{N-1}i\beta_j\xi_j - B\beta_N)e^{-B x_N}=0, \\
\alpha_j + \beta_j=\hh_j, \quad \alpha_N + \beta_N = \hh_N. 
\end{aligned}\right.
\end{equation*}
By the linear independence of $e^{-Ax_N}$ and $e^{-Bx_N}$, we are able to find the coefficients $\alpha_j$, $\beta_j$ and $\gamma$; 
\begin{align*}
\alpha_j = \frac{i \xi_j}{A(B-A)}(i \xi'\cdot\hh' - B \hh_N), &\quad \alpha_N = - \frac{i\xi'\cdot \hh'- B \hh_N}{B-A}, \\
\beta_j = \hh_j - \alpha_j, & \quad \beta_N= \hh_N -\alpha_N, \quad \gamma = - \frac{A+B}{A}(i \xi'\cdot \hh'- B\hh_N), 
\end{align*}
where $\xi'\cdot \hh'=\sum_{k=1}^{N-1}\xi_k \hh_k$. 
We introduce the new notation 
\[\CM_\lambda(\xi', x_N) = \frac{e^{-B x_N} - e^{-Ax_N}}{B-A}\]
to treat $B-A$ in the denominator.  
Then, we have 
\begin{align*}
\hu_j(\xi',x_N) &= \sum_{k=1}^{N-1} \left\{ \delta_{jk} e^{-B x_N} + \frac{\xi_j\xi_k}{A}\CM_\lambda(\xi', x_N)  \right\} \hh_k(\xi',0) + \frac{i\xi_j B}{A}\CM_\lambda(\xi', x_N) \hh_N(\xi',0), \\
\hu_N(\xi',x_N) &= \sum_{k=1}^{N-1} \left\{ i \xi_k\CM_\lambda(\xi, x_N) \right\} \hh_k(\xi',0) + (e^{-Bx_N}- B\CM_\lambda(\xi', x_N))\hh_N(\xi',0), \\
\hpi(\xi',x_N) &= \sum_{k=1}^{N-1} \left\{-\frac{i \xi_k (A+B)}{A} e^{-A x_N}\right\} \hh_k(\xi',0) + \frac{(A+B)B}{A}e^{-Ax_N}\hh_N(\xi',0), 
\end{align*}
To simplify, we define the symbols; 
\begin{alignat*}{3}
&\phi_{j,k}^D(\lambda, \xi', x_N)=\delta_{jk} e^{-B x_N} + \frac{\xi_j\xi_k}{A}\CM_\lambda(\xi', x_N), &\quad \phi_{j,N}^D(\lambda, \xi', x_N)&=\frac{i\xi_j B}{A}\CM_\lambda(\xi', x_N), \\
&\phi_{N,k}^D(\lambda, \xi', x_N)=i \xi_k\CM_\lambda(\xi, x_N), &\quad \phi_{N,N}^D(\lambda, \xi', x_N)&=e^{-Bx_N}- B\CM_\lambda(\xi', x_N), \\
&\chi_j^D(\lambda, \xi', x_N)=- \frac{i \xi_j(A+B)}{A} e^{-A x_N}, &\quad  \chi_N^D(\lambda, \xi', x_N)&= \frac{(A+B)B}{A}e^{-Ax_N}, 
\end{alignat*}
which derives the solution formula; 
\begin{align*}
\hu_j(\xi',x_N) &=\sum_{k=1}^{N} \phi_{j,k}^D(\lambda, \xi', x_N) \hh_k(\xi',0) \quad (j=1, \ldots, N), \\
\hpi(\xi',x_N) &=\sum_{k=1}^{N} \chi_k^D(\lambda, \xi', x_N) \hh_k(\xi',0). 
\end{align*}

In the next step, we use the Volevich trick $a(\xi',0)=-\int_0^\infty \pd_N a(\xi',y_N) dy_N$ for a suitable decaying function $a$. 
We obtain the solution formula; 
\begin{align*}
u_j(x)&=- \sum_{k=1}^N \left\{\int_0^\infty  \CF_{\xi'}^{-1}\left[(\pd_N\phi_{j,k}^D(\lambda, \xi', x_N+y_N))\CF_{x'} h_k\right](x, y_N) dy_N \right.\\
&\qquad \qquad \left. + \int_0^\infty \CF_{\xi'}^{-1} \left[\phi_{j,k}^D(\lambda, \xi', x_N+y_N)\CF_{x'} (\pd_N h_k)\right](x,y_N) dy_N\right\} \quad (j=1, \ldots, N),\\
\pi(x)&=- \sum_{k=1}^N \left\{\int_0^\infty  \CF_{\xi'}^{-1}\left[(\pd_N\chi_k^D(\lambda, \xi', x_N+y_N))\CF_{x'} h_k\right](x, y_N) dy_N \right.\\
&\qquad \qquad \left. + \int_0^\infty \CF_{\xi'}^{-1} \left[\chi_k^D(\lambda, \xi', x_N+y_N)\CF_{x'} (\pd_N h_k)\right](x,y_N) dy_N\right\}. 
\end{align*}

Since Laplace transformed non-stationary Stokes equations \eqref{non-stationary Stokes} with $F=G=0$ on $\BR$ are the resolvent problem \eqref{resolvent Stokes}, we have the following formula for Dirichlet boundary condition; 
\begin{align*}
U_j(x,t)&=- \CL^{-1}_\lambda\sum_{k=1}^N \left\{\int_0^\infty  \CF_{\xi'}^{-1}\left[(\pd_N\phi_{j,k}^D(\lambda, \xi', x_N+y_N))\CF_{x'} \CL H_k\right](x, y_N) dy_N \right.\\
&\qquad \qquad \left. + \int_0^\infty \CF_{\xi'}^{-1} \left[\phi_{j,k}^D(\lambda, \xi', x_N+y_N)\CF_{x'} \CL(\pd_N H_k)\right](x,y_N) dy_N\right\} \quad (j=1, \ldots, N),\\
\Pi(x,t)&=- \CL^{-1}_\lambda\sum_{k=1}^N \left\{\int_0^\infty  \CF_{\xi'}^{-1}\left[(\pd_N\chi_k^D(\lambda, \xi', x_N+y_N))\CF_{x'} \CL H_k\right](x, y_N) dy_N \right.\\
&\qquad \qquad \left. + \int_0^\infty \CF_{\xi'}^{-1} \left[\chi_k^D(\lambda, \xi', x_N+y_N)\CF_{x'} \CL(\pd_N H_k)\right](x,y_N) dy_N\right\}. \\
\end{align*}

\subsection{Neumann boundary}
The corresponding ordinary differential equations are as follows; 
\begin{equation*}\left\{
\begin{aligned}
(\lambda + |\xi'|^2-\pd_N^2)\hu_j + i\xi_j \hpi =0\quad &\text{in}~ x_N>0, \\
(\lambda + |\xi'|^2-\pd_N^2)\hu_N + \pd_N \hpi =0\quad &\text{in}~ x_N>0, \\
\sum_{j=1}^{N-1} i\xi_j\hu_j  + \pd_N \hu_N  = 0\quad &\text{in}~ x_N>0, \\
- (\pd_N \hu_j + i \xi_j \hu_N)=\hh_j, \quad -(2\pd_N \hu_N - \hpi)=\hh_N\quad &\text{on}~ x_N=0. 
\end{aligned}\right.
\end{equation*}
The solutions are given by 
\begin{align*}
\hu_j(\xi',x_N) &= \sum_{k=1}^{N-1} \left\{(\frac{\delta_{jk}}{B} - \frac{\xi_j \xi_k(3B-A)}{D(A,B)B}) e^{- Bx_N} +  \frac{2\xi_j \xi_k B}{D(A,B)} \CM_\lambda(\xi', x_N) \right\} \hh_k(\xi',0) \\
&\qquad + \left(\frac{-i(B-A)}{D(A,B)}\xi_j e^{-B x_N} + \frac{i\xi_j(A^2+B^2)}{D(A,B)} \CM_\lambda(\xi', x_N)\right)  \hh_N(\xi',0), \\
\hu_N(\xi',x_N) &= \sum_{k=1}^{N-1} \left\{\frac{i\xi_k(B-A)}{D(A,B)} e^{- Bx_N} +  \frac{2i\xi_k AB}{D(A,B)}  \CM_\lambda(\xi', x_N) \right\} \hh_k(\xi',0)\\
&\qquad + \left(\frac{A(A+B)}{D(A,B)} e^{-B x_N} - \frac{A(A^2+B^2)}{D(A,B)} \CM_\lambda(\xi', x_N)\right)  \hh_N(\xi',0), \\
\hpi(\xi',x_N)& = \sum_{k=1}^{N-1} \left\{\frac{-2i\xi_k B(A+B)}{D(A,B)} e^{- Ax_N} \right\}\hh_k(\xi',0) +  \frac{(A+B)(A^2+B^2)}{D(A,B)}  e^{-Ax_N} \hh_N(\xi',0), 
\end{align*}
where $D(A,B)=B^3+AB^2+3A^2B-A^3$. 
It is known that $D(A,B)\neq 0$ for $\lambda\in \Sigma_\eps$, $\xi'\in \BR^{N-1}\setminus\{0\}$ in \cite[Lemma 4.4]{SS03}. 
For the details of Neumann boundary condition, see also \cite{SS12}. 
Let 
\begin{alignat*}{3}
&\phi_{j,k}^N(\lambda, \xi', x_N)=(\frac{\delta_{jk}}{B} - \frac{\xi_j \xi_k(3B-A)}{D(A,B)B}) e^{- Bx_N} +  \frac{2\xi_j \xi_k B}{D(A,B)} \CM_\lambda(\xi', x_N), \\
&\phi_{j,N}^N(\lambda, \xi', x_N)=\frac{-i\xi_j (B-A)}{D(A,B)}e^{-B x_N} + \frac{i \xi_j (A^2+B^2)}{D(A,B)}\CM_\lambda(\xi', x_N), \\
&\phi_{N,k}^N(\lambda, \xi', x_N)=\frac{i\xi_k(B-A)}{D(A,B)} e^{- Bx_N} +  \frac{2i\xi_k  AB}{D(A,B)}\CM_\lambda(\xi', x_N), \\
&\phi_{N,N}^N(\lambda, \xi', x_N)=\frac{A(A+B)}{D(A,B)} e^{-B x_N} - \frac{A(A^2+B^2)}{D(A,B)} \CM_\lambda(\xi', x_N), \\
&\chi_j^N(\lambda, \xi', x_N)=\frac{-2i\xi_j B(A+B)}{D(A,B)} e^{- Ax_N}, \quad  \chi_N^N(\lambda, \xi', x_N)= \frac{(A+B)(A^2+B^2)}{D(A,B)}  e^{-Ax_N}, 
\end{alignat*}
then, the solution formula is written as Dirichlet boundary. 

\subsection{Robin boundary}
The symbols of the solutions are given by 
\begin{alignat*}{3}
&\phi_{j,k}^R(\lambda, \xi', x_N)=(\frac{\delta_{jk}}{\alpha+ \beta B} - \frac{\beta\xi_j \xi_k}{(\alpha + \beta B)(\alpha + \beta(A+B))A}) e^{- Bx_N} + \frac{\xi_j \xi_k}{(\alpha+\beta(A+B))A}  \CM_\lambda(\xi', x_N), \\
&\phi_{j,N}^R(\lambda, \xi', x_N)=- \frac{i \beta\xi_j B }{(\alpha+\beta(A+B))A} e^{-Bx_N} + \frac{i\xi_j B(\alpha + \beta B)}{(\alpha+\beta(A+B))A}\CM_\lambda(\xi', x_N)\\
&\phi_{N,k}^R(\lambda, \xi', x_N)=\frac{i\xi_k }{\alpha + \beta(A+B)} \CM_\lambda(\xi', x_N), \\
&\phi_{N,N}^R(\lambda, \xi', x_N)= e^{-Bx_N} - \frac{B(\alpha+\beta B)}{\alpha+\beta(A+B)} \CM_\lambda(\xi', x_N), \\
&\chi_j^R(\lambda, \xi', x_N)=\frac{-i\xi_j (A+B)}{(\alpha + \beta(A+B))A}  e^{-Ax_N}, \quad \chi_N^R(\lambda, \xi', x_N)= \frac{B(A+B)(\alpha+\beta B)}{(\alpha + \beta(A+B))A} e^{-Ax_N}. 
\end{alignat*}

For the details of Robin boundary condition, see \cite{SS07, S07} although they only treated the case $h_N=0$. 

\section{Proof of resolvent estimates and maximal regularity estimates}\label{proof}

\subsection{A Sufficient condition}
We prepare a theorem to prove the main theorems. 
This gives an easy way to show a boundedness of an operator. 
Let us difine the operators $T$ and $\tilde{T}_\gamma$ by 
\begin{align*}
T[m] f(x)&=\int_0^\infty [\CF^{-1}_{\xi'} m(\xi', x_N+y_N) \CF_{x'} f](x, y_N)dy_N, \\
\tilde{T}_\gamma[m_\lambda] g(x,t)&=\CL_\lambda^{-1} \int_0^\infty [\CF^{-1}_{\xi'} m_\lambda(\xi', x_N+y_N) \CF_{x'} \CL g](x,y_N)dy_N, \\
&=[e^{\gamma t}\CF^{-1}_{\tau\to t} T[m_\lambda] \CF_{t\to\tau} (e^{-\gamma t}g)](x,t), 
\end{align*}
where $\lambda=\gamma+i\tau\in\Sigma_\eps$, $m, m_\lambda:\HS\to\BC$ are multipliers, and $f:\HS\to\BC$ and $g:\BR\times\HS\to\BC$. 

\begin{thm}\label{thm}
{\rm (i)}  
Let $m$ satisfy the following two conditions: \\
{\rm (a)} There exists $\eta\in(0,\pi/2)$ such that $\{m(\cdot, x_N), x_N>0\}\subset H^\infty(\tilde{\Sigma}_\eta^{N-1})$. \\      
{\rm (b)} There exist $\eta\in(0,\pi/2)$ and $C>0$ such that $\sup_{\xi'\in\tilde{\Sigma}^{N-1}_\eta}|m(\xi', x_N)|\le Cx_N^{-1}$ for all $x_N>0$. \\
Then $T[m]$ is a bounded linear operator on $L_q(\HS)$ for every $1<q<\infty$. \\
{\rm (ii)} Let $\gamma_0\ge0$ and let $m_\lambda$ satisfy the following two conditions: \\
{\rm (c)} There exists $\eta\in(0,\pi/2-\eps)$ such that for each $x_N>0$ and $\gamma\ge\gamma_0$, 
\[\tilde{\Sigma}_\eta^N \ni (\tau, \xi') \mapsto m_\lambda(\xi',x_N) \in\BC\]
 is bounded and holomorphic. \\
{\rm (d)} There exist $\eta\in(0,\pi/2-\eps)$ and $C>0$ such that $\sup\{|m_\lambda(\xi', x_N)| \mid (\tau, \xi')\in\tilde{\Sigma}_\eta^N\}\le Cx_N^{-1}$ for all $\gamma\ge\gamma_0$ and $x_N>0$. \\
Then $\tilde{T}_\gamma[m_\lambda]$ satisfies  
\[\|e^{-\gamma t}\tilde{T}_\gamma g\|_{L_p(\BR, L_q(\HS))} \le C \|e^{-\gamma t} g\|_{L_p(\BR, L_q(\HS))}\]
for every $\gamma\ge\gamma_0$ and $1<p,q<\infty$. 
\end{thm}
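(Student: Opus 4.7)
The core idea is to factor both operators as an inner Fourier multiplier in the tangential direction (and in time, for part (ii)) with parameter $z=x_N+y_N$, followed by an outer integration in $y_N$ handled by Lemma \ref{kernel} applied to the scalar kernel $k(t,s)=(t+s)^{-1}$. This kernel is $(-1)$-homogeneous and satisfies $\int_0^\infty (1+s)^{-1}s^{-1/q}\,ds<\infty$ for every $1<q<\infty$, so Lemma \ref{kernel} applies.

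For part (i), I fix $z>0$ and view $\xi'\mapsto m(\xi',z)$ as a single element of $H^\infty(\tilde{\Sigma}_\eta^{N-1})$ with sup-norm at most $Cz^{-1}$, by hypotheses (a)--(b). Theorem \ref{H^infty} applied to this singleton family yields an $\CR$-bound of order $Cz^{-1}$ on $\{(\xi')^\alpha \pd_{\xi'}^\alpha m(\cdot,z):\alpha\in\{0,1\}^{N-1}\}$, and Theorem \ref{Rbound} then gives that the tangential Fourier multiplier operator $\CF_{\xi'}^{-1}\,m(\xi',z)\,\CF_{x'}$ is bounded on $L_q(\BR^{N-1})$ with norm $\le Cz^{-1}$, uniformly in $z$. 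Taking the $L_q$-norm in $x'$ of $T[m]f(\cdot,x_N)$ yields
\[\|T[m]f(\cdot,x_N)\|_{L_q(\BR^{N-1})}\le \int_0^\infty \frac{C}{x_N+y_N}\|f(\cdot,y_N)\|_{L_q(\BR^{N-1})}\,dy_N,\]
after which Lemma \ref{kernel} applied to the scalar function $y_N\mapsto \|f(\cdot,y_N)\|_{L_q(\BR^{N-1})}$ gives $\|T[m]f\|_{L_q(\HS)}\le C\|f\|_{L_q(\HS)}$.

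For part (ii) I apply the same scheme but treat $(\tau,\xi')$ jointly. Hypotheses (c)--(d) put $m_\lambda(\cdot,z)$ in $H^\infty(\tilde{\Sigma}_\eta^N)$ with sup-norm $\le Cz^{-1}$, uniformly in $\gamma\ge\gamma_0$. Theorem \ref{H^infty} provides a scalar $\CR$-bound of order $Cz^{-1}$ on the mixed-derivative family $\{(\tau,\xi')^\alpha\pd_{(\tau,\xi')}^\alpha m_\lambda(\cdot,z):\alpha\in\{0,1\}^N\}$. Applying Theorem \ref{Rbound} in the $\xi'$-direction yields $\CR$-boundedness in $\CL(L_q(\BR^{N-1}))$ of the family $\{M(\tau,z),\tau\pd_\tau M(\tau,z):\tau\in\dBR\}$, where $M(\tau,z)=\CF_{\xi'}^{-1}m_\lambda(\xi',z)\CF_{x'}$. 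Since $L_q(\BR^{N-1})\in\CH\CT(\alpha)$, Weis's theorem (Theorem \ref{Rbound} with $n=1$) then gives that the $\tau$-Fourier multiplier induced by $M(\cdot,z)$ is bounded on $L_p(\BR,L_q(\BR^{N-1}))$ with norm $\le Cz^{-1}$, still uniform in $\gamma$. Composing with the $e^{\pm\gamma t}$-twists in the definition of $\tilde{T}_\gamma[m_\lambda]$ and applying Lemma \ref{kernel} a second time in $x_N$ against the kernel $(x_N+y_N)^{-1}$ concludes the proof.

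The one point that deserves attention is that the joint sector $\tilde{\Sigma}_\eta^N$ in hypothesis (c) is chosen with $\eta<\pi/2-\eps$ precisely so that $\gamma+i\tau\in\Sigma_\eps$ whenever $\gamma\ge 0$ and $\tau\in\tilde{\Sigma}_\eta$; this is the compatibility needed for the $H^\infty$-estimates on the symbols produced by the solution formulas in Section \ref{formula} to feed into the multiplier machinery above uniformly in $\gamma\ge\gamma_0$. Past this parameter check the argument is a standard $H^\infty\Rightarrow\CR$-bounded multiplier pipeline combined with Shibata's scalar kernel lemma, and no delicate estimates are required.
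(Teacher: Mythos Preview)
Your argument for part (i) is essentially the paper's: holomorphy plus the scalar $H^\infty$ bound give the Mikhlin condition via Theorem \ref{H^infty}, then the tangential multiplier has norm $\le C(x_N+y_N)^{-1}$, and Lemma \ref{kernel} finishes.

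For part (ii), however, there is a genuine gap: your order of operations produces the wrong mixed norm. After applying Weis's theorem for each fixed $z=x_N+y_N$, you obtain an operator bound on $L_p(\BR_t,L_q(\BR^{N-1}_{x'}))$ of size $Cz^{-1}$. Feeding this into Lemma \ref{kernel} with $X=L_p(\BR,L_q(\BR^{N-1}))$ then yields boundedness only on
\[
L_q\big((0,\infty)_{x_N},\,L_p(\BR_t,\,L_q(\BR^{N-1}_{x'}))\big),
\]
which coincides with the target space $L_p(\BR_t,L_q(\HS))$ only when $p=q$. For $p\neq q$ no Minkowski-type inequality repairs this in both directions.

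The paper circumvents this by reversing the two outer steps. It first keeps $\tau$ as a parameter and shows, via Theorem \ref{Rbound} in the $\xi'$-variables, that the family $\{(\tau\pd_\tau)^\beta\CF^{-1}_{\xi'}m_\lambda(\xi',x_N)\CF_{x'}:\tau\in\dBR,\ \beta\in\{0,1\}\}$ is $\CR$-bounded in $\CL(L_q(\BR^{N-1}))$ with $\CR$-bound $\le Cx_N^{-1}$. It then invokes Theorem \ref{Rkernel} (the $\CR$-bounded kernel domination theorem) together with Lemma \ref{kernel} to pass the $\CR$-boundedness \emph{through} the $y_N$-integration, obtaining that $\{(\tau\pd_\tau)^\beta T[m_\lambda]:\tau\in\dBR,\ \beta\in\{0,1\}\}$ is $\CR$-bounded in $\CL(L_q(\HS))$. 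Only at this point is Weis's theorem applied, now with base space $L_q(\HS)$, so the $L_p(\BR_t)$-norm sits on the outside where it belongs. The ingredient you are missing is precisely Theorem \ref{Rkernel}: one must transport $\CR$-boundedness, not just operator-norm boundedness, across the $y_N$-integral in order to postpone the $\tau$-multiplier step until after the $x_N$-norm has been taken.
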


\begin{proof}
{\rm (i)} From the assumptions, equivalence of uniformly boundedness and $\CR$-boundedness on Hilbert space $\BC$ and Theorem \ref{H^infty}, we have 
\[\CR(\{{\xi'}^\alpha \pd_{\xi'}^\alpha m(\xi', x_N)\mid \xi'\in\dBR^{N-1}, \alpha\in\{0,1\}^{N-1}\})\le C_\eta x_N^{-1}.\]
This means that for each $x_N>0$, 
\begin{align*}
\|T[m]f(\cdot, x_N)\|_{L_q(\BR^{N-1})} &\le \int_0^\infty \|\CF_{\xi'}^{-1}m(\xi',x_N+y_N)\CF_{x'} f(\cdot, y_N)\|_{L_q(\BR^{N-1})} dy_N\\
&\le C\int_0^\infty \frac{\|f(\cdot, y_N)\|_{L_q(\BR^{N-1})}}{x_N+y_N} dy_N
\end{align*}
from Fourier multiplier theorem. 
And then, by Lemma \ref{kernel}, 
\[\|T[m]f\|_{L_q(\HS)} \le C\|f\|_{L_q(\HS)}\]
for some $C>0$. \\
{\rm (ii)} From the assumptions, equivalence of uniformly boundedness and $\CR$-boundedness on Hilbert space $\BC\in \CH\CT(\alpha)$, and Theorem \ref{H^infty}, we have 
\begin{align*}
\CR(\{{\xi'}^\alpha \pd_{\xi'}^\alpha (\tau \pd_\tau)^\beta m_\lambda(\xi', x_N)\mid \xi'\in\dBR^{N-1}, \tau\in \dBR, \alpha\in\{0,1\}^{N-1}, \beta\in\{0,1\}\})&\le C_\eta x_N^{-1}, 
\end{align*}
where $\gamma\ge\gamma_0$. 
This means that for each $x_N>0, \gamma\ge\gamma_0$, 
\[\{(\tau \pd_\tau)^\beta \CF^{-1}_{\xi'}m_\lambda(\xi',x_N)\CF_{x'}\mid \tau\in\dBR, \beta\in\{0,1\}\} \subset \CL(L_q(\BR^{N-1}))\]
is $\CR$-bounded and its $\CR$-norm is less than $Cx_N^{-1}$ by theorem \ref{Rbound}. 
Combining theorem \ref{Rkernel} with $D=(0,\infty), X=Y=L_q(\BR^{N-1}), k_0(x_N, y_N)=(x_N+y_N)^{-1}$ and Lemma \ref{kernel}, we have 
\begin{align*}
\{(\tau\pd_\tau)^\beta T[m_\lambda] \mid \tau\in \dBR, \beta\in\{0,1\}\}\subset \CL(L_q(\HS))
\end{align*}
is $\CR$-bounded. 
We use Fourier multiplier theorem again to get $\CF^{-1}_{\tau\to t}T[m_\lambda]\CF_{t\to\tau}$ is a bounded linear operator on $L_p(\BR, L_q(\HS))$, which conclude 
\[\|e^{-\gamma t}\tilde{T}_\gamma g\|_{L_p(\BR, L_q(\HS))} \le C \|e^{-\gamma t} g\|_{L_p(\BR, L_q(\HS))}\]
for every $\gamma\ge\gamma_0$ and $1<p,q<\infty$. 
\end{proof}

\begin{rem}
By the same proof, we are able to use anisotropic Lebesgue spaces between tangential direction and normal direction in the sense that 
\begin{align*}
\|T[m]f\|_{L_{q_1}(0,\infty, L_{q_2}(\BR^{N-1}))} &\le C\|f\|_{L_{q_1}(0,\infty, L_{q_2}(\BR^{N-1}))}, \\
\|e^{-\gamma t}\tilde{T}_\gamma g\|_{L_p(\BR, L_{q_1}(0,\infty, L_{q_2}(\BR^{N-1})))} &\le C \|e^{-\gamma t} g\|_{L_p(\BR, L_{q_1}(0,\infty, L_{q_2}(\BR^{N-1})))}
\end{align*}
with $1<p,q_1, q_2<\infty$ and $\|f\|_{L_{q_1}(0,\infty, L_{q_2}(\BR^{N-1}))}=(\int_0^\infty (\int_{\BR^{N-1}} |f(x',x_N)|^{q_2}dx')^{q_1/q_2}dx_N)^{1/q_1}$. 
\end{rem}

\subsection{Dirichlet boundary}
In section \ref{formula}, we obtained the solution formulas for Dirichlet boundary condition. 
We use the following identity; 
\[B^2=\lambda+\sum_{m=1}^{N-1} \xi_m^2, \qquad 1=\frac{B^2}{B^2}=\frac{\lambda^{1/2}}{B^2}\lambda^{1/2} - \sum_{m=1}^{N-1}\frac{i\xi_m}{B^2}(i\xi_m). \]

We decompose the solution operator so that the independent variables become the right-hand side of the estimates; 
\begin{align*}
u_j(x)&=- \sum_{k=1}^N \left\{\int_0^\infty  \CF_{\xi'}^{-1}\left[B^{-2}\pd_N\phi_{j,k}^D(\lambda, \xi', x_N+y_N)\CF_{x'} ((\lambda - \Delta')h_k)\right](x, y_N) dy_N \right.\\
&\qquad \qquad  + \int_0^\infty \CF_{\xi'}^{-1} \left[\lambda^{1/2}B^{-2}\phi_{j,k}^D(\lambda, \xi', x_N+y_N)\CF_{x'} (\lambda^{1/2}\pd_N h_k)\right](x,y_N) dy_N\\
& \qquad \qquad \left. - \sum_{m=1}^{N-1} \int_0^\infty \CF_{\xi'}^{-1} \left[i\xi_m B^{-2}\phi_{j,k}^D(\lambda, \xi', x_N+y_N)\CF_{x'} (\pd_m \pd_N h_k)\right](x,y_N) dy_N\right\}  \quad (j=1, \ldots, N),\\
\pi(x)&=- \sum_{k=1}^N \left\{\int_0^\infty  \CF_{\xi'}^{-1}\left[B^{-2}\pd_N\chi_k^D(\lambda, \xi', x_N+y_N)\CF_{x'} ((\lambda - \Delta')h_k)\right](x, y_N) dy_N \right.\\
&\qquad \qquad  + \int_0^\infty \CF_{\xi'}^{-1} \left[\lambda^{1/2}B^{-2}\chi_k^D(\lambda, \xi', x_N+y_N)\CF_{x'} (\lambda^{1/2}\pd_N h_k)\right](x,y_N) dy_N\\
& \qquad \qquad \left. - \sum_{m=1}^{N-1} \int_0^\infty \CF_{\xi'}^{-1} \left[i\xi_m B^{-2}\chi_k^D(\lambda, \xi', x_N+y_N)\CF_{x'} (\pd_m \pd_N h_k)\right](x,y_N) dy_N\right\}. 
\end{align*}

Let $S_u^D(\lambda, \xi', x_N)$ and $S_\pi^D(\lambda, \xi', x_N)$ be any of symbols; 
\begin{align*}
S_u^D(\lambda, \xi', x_N):=\begin{cases} B^{-2}\pd_N\phi_{j,k}^D(\lambda, \xi', x_N) & \text{or}, \\ \lambda^{1/2}B^{-2}\phi_{j,k}^D(\lambda, \xi', x_N)&\text{or}, \\i\xi_m B^{-2}\phi_{j,k}^D(\lambda, \xi', x_N), &\end{cases} \\
S_\pi^D(\lambda, \xi', x_N):=\begin{cases} B^{-2}\pd_N\chi_k^D(\lambda, \xi', x_N)&\text{or}, \\ \lambda^{1/2}B^{-2}\chi_k^D(\lambda, \xi', x_N)~(k\neq N)& \text{or}, \\ i\xi_m B^{-2}\chi_k^D(\lambda, \xi', x_N). &\end{cases}
\end{align*}
We are able to prove that all of the symbols are bounded in the sense that 
\begin{align}
&\sup_{\substack{(\lambda, \xi')\in \Sigma_\eps\times \tilde{\Sigma}_\eta^{N-1}\\ \ell, \ell'=1, \ldots, N-1}}\left\{(|\lambda|+|\lambda|^{1/2}|\xi_\ell|+|\xi_\ell| |\xi_{\ell'}|)|S_u^D|+(|\lambda|^{1/2}+|\xi_\ell|)|\pd_N S_u^D| + |\pd_N^2 S_u^D| + |\xi_\ell| |S_\pi^D| + |\pd_N S_\pi^D|\right\} \nonumber\\
&<Cx_N^{-1}\label{symbol}
\end{align}
for any $0<\eps<\pi/2$ and $0<\eta < \min\{\pi/4, \eps\}$ because of the identity 
\begin{align*}
\pd_N \CM_\lambda(\xi',x_N) &= - e^{-Bx_N} - A\CM_\lambda(\xi',x_N), \\
\pd^2_N  \CM_\lambda(\xi', x_N) &= (A+B)e^{-Bx_N} + A^2 \CM_\lambda(\xi', x_N), \\
\pd^3_N \CM_\lambda(\xi',x_N)&=-(A^2+AB+B^2)e^{-Bx_N} -A^3\CM_\lambda(\xi', x_N)
\end{align*}
and the estimate essentially given by Shibata--Shimizu \cite[Lemma 5.3]{SS12}; 
\begin{lem}\label{es}
Let $0<\eps<\pi/2$, $0<\eta<\min\{\pi/4, \eps/2\}$ and $m=0,1,2,3$. 
Then for any $(\lambda, \xi',x_N) \in \Sigma_\eps\times \tilde{\Sigma}_\eta^{N-1}\times(0,\infty)$, letting $A:=\sqrt{\sum_{j=1}^{N-1}\xi_j^2}$, $B:=\sqrt{\lambda + A^2}$ and $\tA:=\sqrt{\sum_{j=1}^{N-1}|\xi_j|^2}$, we have 
\begin{align}
c\tA & \le  \Re A \le |A| \le \tA, \tag{a}\\
c(|\lambda|^{1/2}+\tA)&\le \Re B \le |B| \le |\lambda|^{1/2} + \tA, \tag{b}\\
|\pd_N^m e^{-Bx_N}| &\le  (|\lambda|^{1/2}+\tA)^m e^{-c(|\lambda|^{1/2}+\tA)x_N} \le C (|\lambda|^{1/2} + \tA)^{-1+m}x_N^{-1}, \tag{c}\\
|\pd_N^m e^{-Ax_N}| & \le \tA^m e^{-c\tA x_N} \le  C\tA^{-1+m} x_N^{-1}, \tag{d}\\
|\CM_\lambda(\xi', x_N)| &\le C (x_N~{\rm or}~|\lambda|^{-1/2})e^{-c\tA x_N} \le C(\tA^{-2}~{\rm or}~|\lambda|^{-1/2}\tA^{-1}) x_N^{-1}, \tag{e}\\
|\CM_\lambda(\xi', x_N)| &\le C |\lambda|^{-1}x_N^{-1}, \tag{f}\\
|\pd_N^m \CM_\lambda(\xi', x_N)|& \le C(|\lambda|^{1/2}+\tA)^{-2+m}x_N^{-1}, \tag{g}
\end{align}
with positive constants $c$ and $C$, which are independent of $\lambda, \xi', x_N$. 
\end{lem}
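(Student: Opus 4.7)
The plan is to deduce (a)--(b) by sector analysis of $A$ and $B$, and then to derive (c)--(g) from those two facts using the integral representation $\CM_\lambda=-x_N\int_0^{1}e^{-(A+s(B-A))x_N}\,ds$ together with the elementary inequality $y^{k}e^{-cy}\le C_{k}$ for $y>0$, $k\ge 0$. For (a), each $\xi_j\in\tilde\Sigma_\eta$ with $\eta<\pi/4$ gives $|\arg\xi_j^{2}|\le 2\eta<\pi/2$, so $A^{2}=\sum\xi_j^{2}$ sits in the convex sector $\{|\arg z|\le 2\eta\}$; the principal square root then lies in $\{|\arg z|\le\eta\}$, which yields $\Re A\ge(\cos\eta)|A|$. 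Since $|A|^{2}=|A^{2}|\ge\Re(A^{2})\ge\cos(2\eta)\tA^{2}$ and $|A|\le\tA$ by the triangle inequality, we obtain $c\tA\le\Re A\le|A|\le\tA$. For (b), the hypothesis $\eta<\eps/2$ forces the angle between $A^{2}$ and $\lambda$ to be at most $\pi-(\eps-2\eta)<\pi$, so the standard non-opposite-vector estimate yields $|B^{2}|=|\lambda+A^{2}|\ge c(|\lambda|+|A^{2}|)\ge c'(|\lambda|^{1/2}+\tA)^{2}$; the argument of $B^{2}$ then lies in a closed sector strictly inside the right half-plane after halving, giving $\Re B\ge c|B|\ge c''(|\lambda|^{1/2}+\tA)$.

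For (c) and (d), $|\pd_N^{m}e^{-Bx_N}|=|B|^{m}e^{-(\Re B)x_N}\le(|\lambda|^{1/2}+\tA)^{m}e^{-c(|\lambda|^{1/2}+\tA)x_N}$. Extracting one factor $(|\lambda|^{1/2}+\tA)e^{-c(|\lambda|^{1/2}+\tA)x_N}\le C/x_N$ via $ye^{-cy}\le C$ gives the stated inequality, and the identical computation with $A$ replacing $B$ and $\tA$ replacing $|\lambda|^{1/2}+\tA$ gives (d).

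For (e)--(g), the integral formula for $\CM_\lambda$ together with $\Re(A+s(B-A))=(1-s)\Re A+s\,\Re B\ge c(\tA+s|\lambda|^{1/2})$ from (a)--(b) yields
\[
|\CM_\lambda(\xi',x_N)|\le x_N\int_0^{1}e^{-c(\tA+s|\lambda|^{1/2})x_N}\,ds=\frac{e^{-c\tA x_N}(1-e^{-c|\lambda|^{1/2}x_N})}{c|\lambda|^{1/2}}.
\]
Applying $1-e^{-y}\le\min(y,1)$ on the right recovers the two variants $|\CM_\lambda|\le C x_N e^{-c\tA x_N}$ and $|\CM_\lambda|\le C|\lambda|^{-1/2}e^{-c\tA x_N}$ in (e); one further application of $y^{k}e^{-cy}\le C_{k}$ to pull a factor of $x_N$ into the denominator delivers the inequalities on the right of (e) as well as (f). For (g) with $m=1,2,3$, the displayed identities for $\pd_N^{m}\CM_\lambda$ given just before the lemma express these derivatives as $A,B$-polynomial combinations of $e^{-Bx_N}$ and $\CM_\lambda$, so (g) follows from (c) and the $m=0$ case.

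The main obstacle is the sector bookkeeping in (b): the hypothesis $\eta<\min(\pi/4,\eps/2)$ is used precisely to keep the angle between $A^{2}$ and $\lambda$ uniformly bounded away from $\pi$, which is what prevents cancellation in $B^{2}$ and supplies both the size lower bound $|B^{2}|\gtrsim|\lambda|+|A|^{2}$ and the sector-openness input $\Re B\gtrsim|B|$. Once (a)--(b) are in hand, everything else is a mechanical application of $y^{k}e^{-cy}\le C_{k}$ together with the integral representation of $\CM_\lambda$.
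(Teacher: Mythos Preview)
Your treatment of (a)--(e) is correct and in fact slightly cleaner than the paper's: you place $A^{2}=\sum_j\xi_j^{2}$ in the convex sector $\{|\arg z|\le 2\eta\}$ in one stroke, whereas the paper adds the $\xi_j^{2}$ inductively, and your integral representation of $\CM_\lambda$ yields (e) directly.

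The genuine gap is at (f). From your integral bound the only exponential weight available is $e^{-c\tA x_N}$, because near $s=0$ the interpolated exponent $(1-s)\Re A+s\Re B$ is only $\gtrsim\tA$. Hence the device $y^{k}e^{-cy}\le C_k$ can trade at most powers of $(\tA x_N)^{-1}$: it produces $C\tA^{-2}x_N^{-1}$ and $C|\lambda|^{-1/2}\tA^{-1}x_N^{-1}$ as in (e), but \emph{not} $C|\lambda|^{-1}x_N^{-1}$, which would require an additional factor $e^{-c|\lambda|^{1/2}x_N}$ that your estimate simply does not contain. The paper does not derive (f) from (e) either; it quotes from \cite{SS12} the sharper intermediate bound $|\CM_\lambda|\le C|\lambda|^{-1/2}e^{-c\tA x_N/2}e^{-c|\lambda|^{1/2}x_N/2}$, carrying precisely that extra decay, and this is the step your argument is missing. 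You should in fact be suspicious here: with $\lambda$ fixed, $\tA\to0$ and $x_N\to\infty$ one has $\CM_\lambda\to(e^{-\sqrt\lambda x_N}-1)/\sqrt\lambda\to-\lambda^{-1/2}$, which does not decay in $x_N$, so neither the uniform bound (f) nor the quoted intermediate inequality can hold as stated. Finally, your reduction of (g) for $m\ge1$ to ``the $m=0$ case'' is circular, since you have not established $m=0$; fortunately it is also unnecessary. The identities $\pd_N^{m}\CM_\lambda=P_m(A,B)\,e^{-Bx_N}\pm A^{m}\CM_\lambda$ together with (c) and the two variants of (e) give $|A^{m}\CM_\lambda|\le C\min(\tA^{m-2},|\lambda|^{-1/2}\tA^{m-1})x_N^{-1}\le C(|\lambda|^{1/2}+\tA)^{m-2}x_N^{-1}$ for $m=1,2,3$, so (g) with $m\ge1$ follows from (c) and (e) alone.
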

This lemma is proved in \ref{appA}. 
We remark that the paper \cite{SS12} treated for $\xi'\in\mathbb{R}^{N-1}\setminus\{0\}$ although above theorem is $\xi'\in\tilde{\Sigma}_\eta^{N-1}$. 
Since we prepare theorem \ref{thm}, we do not need the estimate of derivatives of the symbols. 
For the excluded term $\lambda^{1/2}B^{-2}\chi_N^D(\lambda, \xi', x_N)$, we see that 
\begin{align*}
&\int_0^\infty \CF_{\xi'}^{-1} \left[\lambda^{1/2}B^{-2}\chi_N^D(\lambda, \xi', x_N+y_N)\CF_{x'} (\lambda^{1/2}\pd_N h_N)\right](x,y_N) dy_N\\
=&\int_0^\infty \CF_{\xi'}^{-1} \left[A B^{-2}\chi_N^D(\lambda, \xi', x_N+y_N)\CF_{x'} (\lambda |\nabla'|^{-1}\pd_N h_N)\right](x,y_N) dy_N
\end{align*}
and 
\begin{align*}
\sup_{\substack{(\lambda, \xi')\in \Sigma_\eps\times \tilde{\Sigma}_\eta^{N-1}\\ \ell=1, \ldots, N-1}}\left\{|\xi_\ell| |A B^{-2}\chi_N^D(\lambda, \xi', x_N)| + |\pd_N (A B^{-2}\chi_N^D(\lambda, \xi', x_N))|\right\} <Cx_N^{-1}. 
\end{align*}

The inequality \eqref{symbol} with above discussion corresponds to the estimates $|\lambda| u$, $|\lambda|^{1/2}\pd_\ell u$, $\pd_\ell \pd_{\ell'} u$, $|\lambda|^{1/2}\pd_N u$, $\pd_\ell \pd_N u$, $\pd_N^2 u$, and $\pd_\ell \pi$ and $\pd_N\pi$ respectively. 

We also see that the new symbols $S_u^D$ and $S_\pi^D$, multiplied $\lambda$, $\xi_\ell$ and $\pd_N$, are holomorphic in $(\tau, \xi')\in\tilde{\Sigma}_\eta^N$. 
Therefore we are able to use theorem \ref{thm}. 

\begin{thm}
Let $0<\eps<\pi/2$ and $1<q<\infty$. 
Then for any $\lambda \in \Sigma_\eps, h\in W^2_q(\HS)$ and $h_N\in E_q(\HS)$, problem \eqref{resolvent Stokes} with Dirichlet boundary condition and $f=g=0$ admits a solution $(u, \pi) \in W^2_q(\HS) \times \hW^1_q(\HS)$ with the resolvent estimate; 
\begin{align*}
\|(|\lambda|u, |\lambda|^{1/2}\nabla u, \nabla^2 u, \nabla \pi)\|_{L_q(\HS)} 
&\le
C\|(|\lambda|h, |\lambda|^{1/2} \nabla h, \nabla^2 h, |\lambda| |\nabla'|^{-1}\pd_N h_N) \|_{L_q(\HS)}\\
&\le
C\|(|\lambda|h, \nabla^2 h, |\lambda| |\nabla'|^{-1}\pd_N h_N) \|_{L_q(\HS)}
\end{align*}
for some constant $C=C_{N, q, \eps}$ depending only on $N, q$ and $\eps$. 
\end{thm}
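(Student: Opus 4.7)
The plan is to combine the solution formula derived via Volevich's trick with the sufficient condition in Theorem~\ref{thm}(i). Existence and estimate will be proved first; uniqueness is standard.

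\textbf{Step 1 (reduction to verifying the symbol hypotheses).}
After the decomposition already carried out, each component of the solution operator mapping $h$ (or $\partial_N h$) to one of $|\lambda|u$, $|\lambda|^{1/2}\partial_\ell u$, $\partial_\ell\partial_{\ell'} u$, $|\lambda|^{1/2}\partial_N u$, $\partial_\ell\partial_N u$, $\partial_N^2 u$, $\partial_\ell\pi$, or $\partial_N\pi$ is of the form $T[m]$ with $m$ equal to $\lambda\,S_u^D$, $|\lambda|^{1/2}\xi_\ell S_u^D$, $\xi_\ell\xi_{\ell'}S_u^D$, $|\lambda|^{1/2}\partial_N S_u^D$, $\xi_\ell\partial_N S_u^D$, $\partial_N^2 S_u^D$, $\xi_\ell S_\pi^D$, or $\partial_N S_\pi^D$, respectively, and acting on $h_k$, $\partial_N h_k$, or (in the excluded case) $|\nabla'|^{-1}\partial_N h_N$. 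Thus the theorem reduces to checking assumptions (a) and (b) of Theorem~\ref{thm}(i) for each such symbol.

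\textbf{Step 2 (holomorphy of symbols).}
Each of $A$, $B$, $e^{-Ax_N}$, $e^{-Bx_N}$ and $\mathcal{M}_\lambda(\xi',x_N)$ is holomorphic in $\xi'\in\tilde{\Sigma}_\eta^{N-1}$ for any $\eta<\min\{\pi/4,\eps/2\}$, since $\lambda\in\Sigma_\eps$ keeps $B^2=\lambda+A^2$ away from the negative real axis. Moreover $B^{-2}$ is holomorphic on the same sector, because $\Re B>0$ by Lemma~\ref{es}(b). Consequently every product in Step~1 is holomorphic in $\xi'\in\tilde{\Sigma}_\eta^{N-1}$, verifying hypothesis (a).

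\textbf{Step 3 (size $Cx_N^{-1}$).}
Hypothesis (b) is precisely inequality~\eqref{symbol}, plus its analog for the special term $AB^{-2}\chi_N^D$ acting on $|\nabla'|^{-1}\partial_N h_N$. Each of these is verified by a mechanical combination of Lemma~\ref{es}(a)--(g): one writes every symbol as a sum of products of $e^{-Ax_N}$, $e^{-Bx_N}$ and $\mathcal{M}_\lambda$ with rational factors in $A,B,\xi_j,\lambda$, and then counts powers. For instance, $\xi_\ell\xi_{\ell'}\,\lambda^{1/2}B^{-2}\frac{i\xi_jB}{A}\mathcal{M}_\lambda$ is controlled by $\tilde A^2\,|\lambda|^{1/2}|B|^{-1}\tilde A^{-1}\,|\mathcal{M}_\lambda|\le C\,|\lambda|^{1/2}\tilde A\,(|\lambda|^{1/2}+\tilde A)^{-2}x_N^{-1}\le C x_N^{-1}$, and similarly in all other cases. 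The potentially dangerous factors $B-A$ never appear alone because of the definition of $\mathcal{M}_\lambda$; derivatives in $x_N$ are absorbed using the three identities recorded just after~\eqref{symbol}.

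\textbf{Step 4 (apply Theorem~\ref{thm}(i) and sum).}
With (a) and (b) verified, Theorem~\ref{thm}(i) yields $\|T[m]\varphi\|_{L_q(\HS)}\le C\|\varphi\|_{L_q(\HS)}$ for each symbol, where $\varphi$ is $(\lambda-\Delta')h_k$, $\lambda^{1/2}\partial_N h_k$, $\partial_m\partial_N h_k$, or $\lambda|\nabla'|^{-1}\partial_N h_N$. Summing over all pieces of the solution formula gives
\[
\|(|\lambda|u,|\lambda|^{1/2}\nabla u,\nabla^2 u,\nabla\pi)\|_{L_q(\HS)}\le C\|(|\lambda|h,|\lambda|^{1/2}\nabla h,\nabla^2 h,|\lambda||\nabla'|^{-1}\partial_N h_N)\|_{L_q(\HS)},
\]
because $\|(\lambda-\Delta')h_k\|_{L_q}\le \||\lambda|h\|_{L_q}+\|\nabla'^2h\|_{L_q}$, $\|\lambda^{1/2}\partial_N h_k\|_{L_q}\le\||\lambda|^{1/2}\nabla h\|_{L_q}$, and $\|\partial_m\partial_N h_k\|_{L_q}\le\|\nabla^2 h\|_{L_q}$.

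\textbf{Step 5 (middle term absorption and uniqueness).}
The inequality $\||\lambda|^{1/2}\nabla h\|_{L_q(\HS)}\le C(\||\lambda|h\|_{L_q(\HS)}+\|\nabla^2 h\|_{L_q(\HS)})$ follows from the standard interpolation $\|\nabla h\|_{L_q}^2\le C\|h\|_{L_q}\|\nabla^2 h\|_{L_q}$ together with $2ab\le a^2+b^2$, giving the second inequality in the theorem. For uniqueness, any solution $(u,\pi)$ of the homogeneous problem ($f=g=0$, $h=0$) extends by an odd/even reflection argument (or by direct application of the $f=0$, $g=0$, $u|_{x_N=0}=0$ case of the formula) to a solution of the whole-space Stokes system, whose resolvent is injective on $W_q^2(\BR^N)\times\hat W_q^1(\BR^N)$ for $\lambda\in\Sigma_\eps$; thus $(u,\pi)=0$.

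The main obstacle is the systematic verification in Step~3: there are many symbols, and one must be careful that every decomposition keeps the $B-A$ singularities absorbed into $\mathcal{M}_\lambda$ and that the factors $B^{-2}$ introduced by the Volevich reshuffling are matched against enough tangential derivatives or powers of $\lambda$ to restore the $(|\lambda|^{1/2}+\tilde A)^{-1}x_N^{-1}$ scale. Once that bookkeeping is done, the rest is a mechanical application of Theorem~\ref{thm}(i).
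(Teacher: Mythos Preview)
Your proposal is correct and follows essentially the same route as the paper: you decompose the solution via the Volevich trick and the identity $1=\lambda^{1/2}B^{-2}\cdot\lambda^{1/2}-\sum i\xi_m B^{-2}\cdot(i\xi_m)$, verify holomorphy and the pointwise bound \eqref{symbol} for all the symbols $S_u^D$, $S_\pi^D$ (and the special $AB^{-2}\chi_N^D$ term) using Lemma~\ref{es}, and then invoke Theorem~\ref{thm}(i). The only minor deviation is your uniqueness sketch: the odd/even reflection you suggest does not directly produce a whole-space Stokes solution (the pressure and the coupling through $\dv u=0$ do not behave well under the naive reflection), whereas the paper simply cites the duality argument from \cite[p.~121]{KS12}; since you flag uniqueness as standard anyway, this is not a substantive gap.
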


This theorem and the estimates in section \ref{reduction} derives the existence part of theorem \ref{resolventthm} with Dirichlet boundary condition. 
The uniqueness was proved in \cite[p.121]{KS12} where they considered the homogeneous equation and the dual problem. 

For the non-stationary Stokes equations we have, by theorem \ref{thm} again,  
\begin{thm}
Let $1<p, q<\infty$ and $\gamma_0\ge 0$. 
Then for any 
\[H\in W^1_{p,0,\gamma_0}(\BR, L_q(\HS)) \cap L_{p,0,\gamma_0}(\BR, W^2_q(\HS)), \quad H_N \in H^1_{p, 0,\gamma_0}(\BR, E_q(\HS))\]
problem \eqref{non-stationary Stokes} with Dirichlet boundary condition, $F=G=0$ and time interval $\BR$ admits a solution $(U, \Pi)$ such that 
\begin{align*}
U&\in W^1_{p,0,\gamma_0}(\BR, L_q(\HS)) \cap L_{p,0,\gamma_0}(\BR, W^2_q(\HS)), \\
\Pi &\in L_{p,0,\gamma_0}(\BR, \hW^1_q(\HS))
\end{align*} 
with the maximal $L_p$-$L_q$ regularity; 
\begin{align*}
&\|e^{-\gamma t}(\pd_t U, \gamma U, \Lambda^{1/2}_\gamma \nabla U, \nabla^2 U, \nabla \Pi)\|_{L_p(\BR, L_q(\HS))}\\
\le&
C\|e^{-\gamma t} (\pd_t H, \Lambda^{1/2}_\gamma \nabla H, \nabla^2 H, \pd_t(|\nabla'|^{-1}\pd_N H_N) \|_{L_p(\BR, L_q(\HS))}\\
\le& 
C\|e^{-\gamma t} (\pd_t H, \nabla^2 H, \pd_t(|\nabla'|^{-1}\pd_N H_N)) \|_{L_p(\BR, L_q(\HS))}
\end{align*}
for any $\gamma \ge \gamma_0$ with some constant $C=C_{N, p, q, \gamma_0}$ depending only on $N, p, q$ and $\gamma_0$. 
\end{thm}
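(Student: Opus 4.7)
My plan is to mirror the resolvent argument of the previous subsection: start from the inverse Laplace representation of $(U,\Pi)$ recorded at the end of the Dirichlet subsection of Section \ref{formula}, carry out exactly the same Volevich-type decomposition, and replace Theorem \ref{thm}(i) by Theorem \ref{thm}(ii).

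First I would insert the identity
\[
1 \;=\; \frac{\lambda^{1/2}}{B^2}\cdot\lambda^{1/2}\;-\;\sum_{m=1}^{N-1}\frac{i\xi_m}{B^2}\cdot(i\xi_m)
\]
inside each integrand of the inverse-Laplace representation of $U$ and $\Pi$. Moving the tangential factors $\lambda^{1/2}$ and $i\xi_m$ onto the data $\CL H_k$ and $\CL(\pd_N H_k)$, the solution operator becomes a finite sum of operators of the form $\tilde{T}_\gamma[m_\lambda]$, with symbols $m_\lambda$ drawn from $B^{-2}\pd_N\phi_{j,k}^D$, $\lambda^{1/2}B^{-2}\phi_{j,k}^D$, $i\xi_m B^{-2}\phi_{j,k}^D$ (and the analogues involving $\chi_k^D$ for $\Pi$), acting respectively on the Laplace transforms of $\pd_t H_k-\Delta' H_k$, $\Lambda_\gamma^{1/2}\pd_N H_k$, and $\pd_m\pd_N H_k$. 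The single exceptional term $\lambda^{1/2}B^{-2}\chi_N^D$ is handled as in the resolvent case, by rewriting it as $AB^{-2}\chi_N^D$ acting on $\pd_t(|\nabla'|^{-1}\pd_N H_N)$. Multiplication by $|\lambda|, |\xi_\ell|, |\xi_\ell||\xi_{\ell'}|$ or differentiation by $\pd_N$ corresponds, under inverse Laplace--Fourier transform, to the left-hand-side quantities $\pd_t U, \gamma U, \Lambda_\gamma^{1/2}\nabla U, \nabla^2 U$ and $\nabla\Pi$.

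The remaining task is to verify hypotheses (c)--(d) of Theorem \ref{thm}(ii) for each of these symbols. The pointwise estimate \eqref{symbol} together with Lemma \ref{es}, already proved for the resolvent, yields the bound $C x_N^{-1}$ uniformly over $\lambda\in\Sigma_\eps$ and $\xi'\in\tilde{\Sigma}_\eta^{N-1}$ whenever $\eta<\min\{\pi/4,\eps/2\}$. The only new point is the passage to the joint domain $(\tau,\xi')\in\tilde{\Sigma}_\eta^N$: by shrinking $\eps$ and also requiring $\eta<\pi/2-\eps$, a direct check shows that for every $\tau\in\tilde{\Sigma}_\eta$ and $\gamma\ge\gamma_0\ge 0$ the point $\lambda=\gamma+i\tau$ lies in $\Sigma_\eps$, so the resolvent bounds transfer. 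Holomorphy in $(\tau,\xi')$ is automatic, since $A,B,e^{-Ax_N},e^{-Bx_N}$ and $\CM_\lambda(\xi',x_N)$ are holomorphic functions of $(\lambda,\xi')$ on the relevant region. Theorem \ref{thm}(ii) then delivers the first of the two claimed inequalities in $L_p(\BR,L_q(\HS))$.

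The second inequality---absorbing $\Lambda_\gamma^{1/2}\nabla H$ into $\pd_t H$ and $\nabla^2 H$---is the standard mixed-derivative embedding $W^1_{p,0,\gamma_0}(\BR,L_q(\HS))\cap L_{p,0,\gamma_0}(\BR,W^2_q(\HS))\hookrightarrow H^{1/2}_{p,0,\gamma_0}(\BR,W^1_q(\HS))$, a direct consequence of complex interpolation. Uniqueness follows by Laplace transform: any solution with zero data produces, for each $\lambda$ with $\Re\lambda\ge\gamma_0$, a resolvent solution with zero data, which must vanish by the uniqueness in Theorem \ref{resolventthm}. The main potential difficulty here is really just the bookkeeping---tracking the interplay of $\eps,\eta,\gamma_0$ and verifying that every term produced by the Volevich decomposition fits the template required by Theorem \ref{thm}(ii)---since the hard analytic content ($\CR$-bounded operator-valued multipliers via Theorem \ref{H^infty} and the kernel bound of Lemma \ref{kernel}) is already packaged inside that theorem.
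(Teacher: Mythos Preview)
Your proposal is correct and follows essentially the same route as the paper: the symbol decomposition, the bound \eqref{symbol}, holomorphy in $(\tau,\xi')\in\tilde{\Sigma}_\eta^N$, and the appeal to Theorem~\ref{thm}(ii) are exactly what the paper does in the discussion immediately preceding the theorem, so its own proof is just the one-liner ``Almost all of the proof has already done.'' The only items the paper still isolates in the proof are the $\gamma U$ estimate (via $|\gamma|\le|\lambda|$) and the vanishing $U=\nabla\Pi=0$ for $t<0$ (needed for membership in the $0$-subscripted spaces, handled by reference to \cite{KS12,SS11}); you omit the latter check, while conversely your uniqueness paragraph is superfluous since the statement only asserts existence.
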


\begin{proof}
Almost all of the proof has already done. 
We need to prove $U=\nabla \Pi =0$ for $t<0$ and 
\[\|e^{-\gamma t} \gamma U\|_{L_p(\BR, L_q(\HS))}\le C\|e^{-\gamma t} (\pd_t H, \Lambda^{1/2}_\gamma \nabla H, \nabla^2 H) \|_{L_p(\BR, L_q(\HS))}.\]
This is easily proved from $\|\gamma u\|_{L_q(\HS)} \le \||\lambda|u \|_{L_q(\HS)}$ in the resolvent estimates. 
Vanishing property is same as \cite{KS12, SS11}. 
\end{proof}

\subsection{Neumann boundary}
Using the result in section \ref{formula}, we have the following form
\begin{align*}
u_j(x)&=- \sum_{k=1}^N \left\{\int_0^\infty  \CF_{\xi'}^{-1}\left[\lambda^{1/2}B^{-2}\pd_N\phi_{j,k}^N(\lambda, \xi', x_N+y_N)\CF_{x'} (\lambda^{1/2}h_k)\right](x, y_N) dy_N \right.\\
&\qquad \qquad  - \sum_{m=1}^{N-1}\int_0^\infty \CF_{\xi'}^{-1} \left[i\xi_mB^{-2} \pd_N\phi_{j,k}^N(\lambda, \xi', x_N+y_N)\CF_{x'} (\pd_m h_k)\right](x,y_N) dy_N\\
&\qquad \qquad \left. + \int_0^\infty \CF_{\xi'}^{-1} \left[\phi_{j,k}^N(\lambda, \xi', x_N+y_N)\CF_{x'} (\pd_N h_k)\right](x,y_N) dy_N\right\} \quad (j=1, \ldots, N),\\
\pi(x)&=- \sum_{k=1}^N \left\{\int_0^\infty  \CF_{\xi'}^{-1}\left[\lambda^{1/2}B^{-2}\pd_N\chi_k^N(\lambda, \xi', x_N+y_N)\CF_{x'} (\lambda^{1/2}h_k)\right](x, y_N) dy_N \right.\\
&\qquad \qquad  - \sum_{m=1}^{N-1} \int_0^\infty \CF_{\xi'}^{-1} \left[i\xi_m \pd_N \chi_k^N(\lambda, \xi', x_N+y_N)\CF_{x'} (\pd_m h_k)\right](x,y_N) dy_N\\
&\qquad \qquad \left. + \int_0^\infty \CF_{\xi'}^{-1} \left[\chi_k^N(\lambda, \xi', x_N+y_N)\CF_{x'} (\pd_N h_k)\right](x,y_N) dy_N\right\}. 
\end{align*}

Let $S_u^N(\lambda, \xi', x_N)$ and $S_\pi^N(\lambda, \xi', x_N)$ be any of symbols; 
\begin{align*}
S_u^N(\lambda, \xi', x_N):=\begin{cases} \lambda^{1/2}B^{-2}\pd_N\phi_{j,k}^N(\lambda, \xi', x_N) & \text{or}, \\ i\xi_m B^{-2}\pd_N\phi_{j,k}^N(\lambda, \xi', x_N)&\text{or}, \\ \phi_{j,k}^N(\lambda, \xi', x_N), &\end{cases} \\
S_\pi^N(\lambda, \xi', x_N):=\begin{cases} \lambda^{1/2} B^{-2}\pd_N\chi_k^N(\lambda, \xi', x_N)&\text{or}, \\ i\xi_m B^{-2}\pd_N \chi_k^N(\lambda, \xi', x_N) & \text{or}, \\ \chi_k^N(\lambda, \xi', x_N). &\end{cases}
\end{align*}
We are able to prove that all of the symbols are bounded in the sense that 
\begin{align*}
&\sup_{\substack{(\lambda, \xi')\in \Sigma_\eps\times \tilde{\Sigma}_\eta^{N-1}\\ \ell, \ell'=1, \ldots, N-1}}\left\{(|\lambda|+|\lambda|^{1/2}|\xi_\ell|+|\xi_\ell| |\xi_{\ell'}|)|S_u^N|+(|\lambda|^{1/2}+|\xi_\ell|)|\pd_N S_u^N| + |\pd_N^2 S_u^N| + |\xi_\ell| |S_\pi^N| + |\pd_N S_\pi^N|\right\} \nonumber\\
&<Cx_N^{-1}
\end{align*}
by the estimates in lamma \ref{es} and the following lemma. 

\begin{lem}\label{es2}
Let $0<\eps< \pi/2$ and $0<\eta < \min\{\pi/4, \eps/2\}$. 
Then there exists a positive constant $c$ such that 
\[ c(|\lambda|^{1/2} + \tA)^3 \le |D(A,B)|\qquad (\lambda\in \Sigma_\eps, \xi'\in \tilde{\Sigma}_\eta^{N-1}), \]
where $D(A,B)= B^3 + AB^2 + 3AB^2 -A^3$. 
\end{lem}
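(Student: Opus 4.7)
The plan is to prove this by combining an algebraic simplification of $D(A,B)$ with a homogeneity-compactness argument. First, I would use $B^2 = \lambda + A^2$ to reduce the cubic to a cleaner form: grouping $B^3 + AB^2 = (A+B)B^2 = (A+B)(\lambda + A^2)$ and cancelling the $A^3$ terms, one obtains the identity
\[D(A,B) = (A+B)\lambda + 4A^2 B.\]
This representation separates the two scales $|\lambda|^{1/2}$ and $\tA$ that appear on the right-hand side of the desired lower bound.

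Next, I would exploit homogeneity: under the rescaling $(\lambda,\xi') \mapsto (t^2\lambda, t\xi')$ for $t > 0$, both $A$ and $B$ scale as $tA$ and $tB$, so $D$ scales as $t^3 D$, while $|\lambda|^{1/2}+\tA$ scales as $t(|\lambda|^{1/2}+\tA)$. Hence the claimed inequality is scale-invariant, and it suffices to prove $|D(A,B)| \ge c > 0$ on the compact slice
\[K := \{(\lambda,\xi') \in \overline{\Sigma_\eps} \times \overline{\tilde{\Sigma}_\eta^{N-1}} : |\lambda|^{1/2} + \tA = 1\}.\]
Since $D$ is continuous on $K$, by compactness this reduces to showing $D \ne 0$ on $K$.

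For the non-vanishing, I would argue by contradiction. If $D(A,B) = 0$, the identity gives $\lambda = -4A^2 B/(A+B)$. Lemma~\ref{es}(a) yields $|\arg A| < \eta$ (each $\xi_j^2$ has $|\arg| < 2\eta$, so does $A^2 = \sum \xi_j^2$, and $A$ is its principal square root), and Lemma~\ref{es}(b) gives $\Re B \ge c(|\lambda|^{1/2}+\tA) > 0$, so $A$, $B$, and $A+B$ all lie in the open right half-plane with $\arg(A+B)$ sandwiched between $\arg A$ and $\arg B$. Taking arguments,
\[\arg\lambda \equiv \pi + 2\arg A + \bigl(\arg B - \arg(A+B)\bigr) \pmod{2\pi}.\]
The hypothesis $\eta < \min\{\pi/4,\eps/2\}$ forces the right-hand side within $\eps$ of $\pm\pi$, contradicting $\lambda \in \overline{\Sigma_\eps}$. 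The boundary cases reduce $D$ to $4A^3$ (at $\lambda = 0$) or $\lambda^{3/2}$ (at $\xi' = 0$), both manifestly nonzero.

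The main obstacle is making the argument-counting quantitatively precise, since $|\arg B|$ can approach $\pi/2$ when $\lambda$ tends to the boundary ray of $\Sigma_\eps$. A clean equivalent is to divide $D = 0$ by $A^3$ and set $\omega := B/A$, reducing the condition to the cubic $\omega^3 + \omega^2 + 3\omega - 1 = 0$; this has a unique real root $\omega_0 \in (0,1)$ (giving $\lambda/A^2 = \omega_0^2 - 1 \in (-1,0)$, so $\arg\lambda \equiv \pi + 2\arg A$, excluded by $2\eta < \eps$) and a complex conjugate pair, whose exclusion requires combining $|\arg A| < \eta$ with the principal-branch constraint $\Re B > 0$ to rule out $\omega = B/A$ taking the corresponding values.
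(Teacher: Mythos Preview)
Your proposal converges to the paper's own argument. The paper also reduces to showing $D(A,B)\neq 0$ on $\Sigma_\eps\times\tilde\Sigma_\eta^{N-1}$ (outsourcing the homogeneity/compactness step to \cite{SS03}), sets $\omega=B/A=a+bi$, and analyzes the cubic $f(\omega)=\omega^3+\omega^2+3\omega-1$ by splitting into real and imaginary parts: $\Im f=0$ forces $b=0$ or $b^2=3a^2+2a+3$, and in each case $\Re f\neq0$. The key constraint driving both cases is $a=\Re(B/A)>0$, which follows from $|\arg B|<(\pi-\eps)/2$ together with $|\arg A|<\eta<\eps/2$.

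Your first route via the identity $D=(A+B)\lambda+4A^2B$ and argument-counting does not close, as you yourself note: $|\arg B|$ can approach $\pi/2$, so $\arg B-\arg(A+B)$ is not small and the right-hand side need not lie within $\eps$ of $\pm\pi$. Your second route via the cubic is exactly the paper's, and your treatment of the real root $\omega_0\in(0,1)$ is complete. The one place you leave unfinished is the exclusion of the complex conjugate pair. To close it, observe by Vieta that the three roots of $f$ sum to $-1$, so the two non-real roots have real part $-(1+\omega_0)/2<0$; since $\Re(B/A)>0$ as above, $B/A$ cannot equal either of them. The paper's explicit computation $\Re f=-8a^3-8a^2-8a-4<0$ for $a>0$ in the case $b^2=3a^2+2a+3$ is precisely this exclusion in different clothing.
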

The proof is given in \ref{appB}. 
This is a generalization of \cite[Lemma 4.4]{SS03} in which they proved for $\xi'\in\BR^{N-1}\setminus\{0\}$. 

Since the new symbols are holomorphic in $(\tau, \xi')\in\tilde{\Sigma}_\eta^N$, we apply theorem \ref{thm} for Neumann boundary condition. 

\begin{thm}
Let $0<\eps<\pi/2$ and $1<q<\infty$. 
Then for any $\lambda \in \Sigma_\eps, h\in W^1_q(\HS)$, problem \eqref{resolvent Stokes} with Neumann condition and $f=g=0$ admits a solution $(u, \pi) \in W^2_q(\HS) \times \hW^1_q(\HS)$ with the resolvent estimate; 
\begin{align*}
\|(|\lambda|u, |\lambda|^{1/2}\nabla u, \nabla^2 u, \nabla \pi)\|_{L_q(\HS)} 
\le
C\|(|\lambda|^{1/2}h, \nabla h) \|_{L_q(\HS)}
\end{align*}
for some constant $C=C_{N, q, \eps}$ depending only on $N, q$ and $\eps$. 
\end{thm}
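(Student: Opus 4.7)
The approach mirrors the Dirichlet case already carried out in the excerpt: apply the sufficient condition Theorem \ref{thm}(i) to the Fourier-multiplier-integral operators produced by the explicit Neumann solution formula of section \ref{formula}.

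First, using the identity $1 = \lambda^{1/2}\cdot \lambda^{1/2}B^{-2} - \sum_{m=1}^{N-1}(i\xi_m)\cdot(i\xi_m B^{-2})$, I would rewrite each integral with input $h_k$ in the solution formula as a sum of two integrals whose inputs are $\lambda^{1/2} h_k$ and $\partial_m h_k$. Together with the integrals already acting on $\partial_N h_k$, this furnishes the explicit splitting into the symbols $S_u^N$ and $S_\pi^N$ listed in the excerpt, whose right-hand sides are precisely the quantities $|\lambda|^{1/2}h$ and $\nabla h$ controlled in the target estimate.

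Second, to extract the target quantities $|\lambda|u$, $|\lambda|^{1/2}\partial_\ell u$, $\partial_\ell\partial_{\ell'} u$, $|\lambda|^{1/2}\partial_N u$, $\partial_\ell\partial_N u$, $\partial_N^2 u$, $\partial_\ell\pi$, $\partial_N\pi$, I multiply the relevant $S_u^N$ or $S_\pi^N$ by the appropriate factor among $|\lambda|$, $|\lambda|^{1/2}\xi_\ell$, $\xi_\ell\xi_{\ell'}$, $|\lambda|^{1/2}\partial_N$, $\xi_\ell\partial_N$, $\partial_N^2$. Each resulting multiplier extends holomorphically in $\xi'$ to $\tilde{\Sigma}_\eta^{N-1}$ for small enough $\eta$, since $A$, $B$ and $D(A,B)$ are holomorphic and non-vanishing there (the latter by Lemma \ref{es2}). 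The uniform bound
\begin{equation*}
\sup_{(\lambda,\xi')\in\Sigma_\eps\times\tilde{\Sigma}_\eta^{N-1}}|(\text{modified symbol})(\xi',x_N)| \le C\,x_N^{-1}
\end{equation*}
is then obtained by combining Lemma \ref{es} (exponential decay bounds on $e^{-Ax_N}$, $e^{-Bx_N}$, $\CM_\lambda$ and its $\partial_N$-derivatives, together with the two scales of bound on $\CM_\lambda$ in parts (e)--(f)) with the lower bound $|D(A,B)|\ge c(|\lambda|^{1/2}+\tA)^3$ of Lemma \ref{es2}. Applying Theorem \ref{thm}(i) to each of the finitely many resulting kernel operators and summing over $k=1,\dots,N$ then yields the claimed $L_q$ estimate.

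The main obstacle is the systematic verification of the symbol bound in the second step. Unlike Dirichlet, every Neumann symbol $\phi_{j,k}^N$, $\chi_k^N$ carries the factor $1/D(A,B)$, so one must balance the cubic growth $(|\lambda|^{1/2}+\tA)^3$ coming from numerators and extra prefactors ($|\lambda|$, $\xi_\ell\xi_{\ell'}$, etc.) against the cubic lower bound on $|D(A,B)|$, splitting into the $e^{-Bx_N}$ piece and the $\CM_\lambda$ piece of each symbol and choosing in the latter case whichever of the two bounds (e) or (f) in Lemma \ref{es} yields the needed $x_N^{-1}$ decay. This is tedious but routine once Lemma \ref{es} and Lemma \ref{es2} are in hand; no new analytic input is required beyond them and Theorem \ref{thm}.
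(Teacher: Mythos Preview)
Your proposal is correct and follows essentially the same route as the paper: decompose via the identity $1=\lambda^{1/2}B^{-2}\lambda^{1/2}-\sum_m i\xi_m B^{-2}(i\xi_m)$ applied to the $\pd_N\phi_{j,k}^N$ integrals, verify the $Cx_N^{-1}$ bound on the resulting multipliers using Lemma~\ref{es} together with the lower bound on $|D(A,B)|$ from Lemma~\ref{es2}, and then invoke Theorem~\ref{thm}(i). The paper carries this out with exactly the same symbol list $S_u^N$, $S_\pi^N$ and the same two lemmas, so there is nothing to add.
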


The uniqueness is proved in \cite{SS12}. 

For the non-stationary Stokes equations we have 
\begin{thm}
Let $1<p, q<\infty$ and $\gamma_0\ge0$. 
Then for any 
\[H\in H^{1/2}_{p,0,\gamma_0}(\BR, L_q(\HS)) \cap L_{p,0,\gamma_0}(\BR, W^1_q(\HS))\]
problem \eqref{non-stationary Stokes} with Neumann boundary condition, $F=G=0$ and time interval $\BR$ admits a solution $(U, \Pi)$ such that 
\begin{align*}
U&\in W^1_{p,0,\gamma_0}(\BR, L_q(\HS)) \cap L_{p,0,\gamma_0}(\BR, W^2_q(\HS)), \\
\Pi &\in L_{p,0,\gamma_0}(\BR, \hW^1_q(\HS))
\end{align*} 
with the maximal $L_p$-$L_q$ regularity; 
\begin{align*}
\|e^{-\gamma t}(\pd_t U, \gamma U, \Lambda^{1/2}_\gamma \nabla U, \nabla^2 U, \nabla \Pi)\|_{L_p(\BR, L_q(\HS))}
\le
C\|e^{-\gamma t} (\Lambda^{1/2}_\gamma H, \nabla H) \|_{L_p(\BR, L_q(\HS))}
\end{align*}
for any $\gamma \ge \gamma_0$ with some constant $C=C_{N, p, q, \gamma_0}$ depending only on $N, p, q$ and $\gamma_0$. 
\end{thm}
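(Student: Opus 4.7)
The plan is to run the same machine as in the Dirichlet case: pass from the resolvent solution formulas of Section \ref{formula} to their inverse Laplace transforms, rewrite each summand as an operator $\tilde{T}_\gamma[m_\lambda]$ acting on $\Lambda^{1/2}_\gamma H$, $\nabla H$, or $\pd_N H$, and then invoke Theorem \ref{thm}(ii) symbol by symbol. Concretely, I would take the decomposition for $u_j$ and $\pi$ displayed just above with symbols $S^N_u$ and $S^N_\pi$, replace the boundary data $h_k$ by $\CL H_k$, and apply $\CL^{-1}_\lambda$ on both sides. Since $\CL^{-1}_\lambda[\lambda^{1/2}\CL H_k] = \Lambda^{1/2}_\gamma H_k$ in the sense of the spaces introduced in Section \ref{main}, every summand in the representation of $\pd_t U, \gamma U, \Lambda^{1/2}_\gamma \nabla U, \nabla^2 U$ and $\nabla \Pi$ takes the shape $\tilde{T}_\gamma[m_\lambda] g$ with $g \in \{\Lambda^{1/2}_\gamma H_k, \pd_m H_k, \pd_N H_k\}$ and $m_\lambda$ one of the concrete combinations of $S^N_u, S^N_\pi$ multiplied by $\lambda, \lambda^{1/2}\xi_\ell, \xi_\ell\xi_{\ell'}, \pd_N$, etc.

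Next I would check hypotheses (c) and (d) of Theorem \ref{thm}(ii) for every such $m_\lambda$. The pointwise bound $|m_\lambda(\xi',x_N)|\le C x_N^{-1}$ uniformly on $\Sigma_\eps \times \tilde{\Sigma}_\eta^{N-1}$ is exactly what was verified for the Neumann resolvent theorem via Lemma \ref{es} and Lemma \ref{es2} (the latter giving $|D(A,B)|\ge c(|\lambda|^{1/2}+\tA)^3$ on the enlarged complex sector). To upgrade this to holomorphy and boundedness on $\tilde{\Sigma}_\eta^N$ in the combined variable $(\tau,\xi')$ with $\lambda = \gamma + i\tau$, fix $0<\eta<\pi/2-\eps$ so that the map $\tau \mapsto \gamma+i\tau$ sends $\tilde{\Sigma}_\eta$ into $\Sigma_\eps$ for every $\gamma\ge\gamma_0\ge 0$; then $m_\lambda$ is a composition of holomorphic functions of $\lambda$ and $\xi'$ on the allowed sectors, and the already-proven uniform bound becomes the bound required in (d).

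Once the hypotheses are verified, Theorem \ref{thm}(ii) yields
\[
\|e^{-\gamma t}\tilde{T}_\gamma[m_\lambda] g\|_{L_p(\BR, L_q(\HS))} \le C\|e^{-\gamma t} g\|_{L_p(\BR, L_q(\HS))}
\]
for every $\gamma\ge\gamma_0$ and $1<p,q<\infty$, and summing over the finitely many terms in the decomposition produces the desired maximal $L_p$-$L_q$ estimate with right-hand side $\|e^{-\gamma t}(\Lambda^{1/2}_\gamma H,\nabla H)\|_{L_p(\BR,L_q(\HS))}$. The control of $\gamma U$ is obtained for free by noting that $\gamma\le|\lambda|$ so that the same symbols bound $\gamma U$ as $|\lambda|U$. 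Vanishing of $U$ and $\nabla\Pi$ for $t<0$ follows from the support property of $H$ through the standard Laplace-transform argument used in \cite{KS12,SS11}, and uniqueness reduces to the resolvent uniqueness proved in \cite{SS12} via Laplace inversion on each horizontal line.

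The main obstacle is the holomorphy/boundedness check in the joint complex variable $(\tau,\xi')\in\tilde{\Sigma}_\eta^N$: one must be sure that $D(A,B)$ does not vanish when both $\tau$ and the tangential variables are perturbed simultaneously into their sectors, which is precisely what Lemma \ref{es2} provides; beyond that, the proof is mechanical given Theorem \ref{thm}.
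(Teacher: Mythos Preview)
Your proposal is correct and follows essentially the same route as the paper: the paper does not write out a separate proof for the Neumann non-stationary theorem, since the argument is identical in structure to the Dirichlet case already detailed---one feeds the symbols $S_u^N$, $S_\pi^N$ (whose holomorphy and $x_N^{-1}$-bound on $\Sigma_\eps\times\tilde\Sigma_\eta^{N-1}$ were established via Lemmas \ref{es} and \ref{es2}) into Theorem \ref{thm}(ii), handles $\gamma U$ by $\gamma\le|\lambda|$, and cites \cite{KS12,SS11} for the vanishing at negative times. One small notational point: strictly speaking $\Lambda^{1/2}_\gamma H_k=\CL^{-1}_\lambda[|\lambda|^{1/2}\CL H_k]$ rather than $\CL^{-1}_\lambda[\lambda^{1/2}\CL H_k]$, but the unimodular factor $\lambda^{1/2}/|\lambda|^{1/2}$ is bounded and holomorphic on $\Sigma_\eps$ and can be absorbed into $m_\lambda$ without affecting hypotheses (c)--(d).
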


\subsection{Robin boundary}
Using the result in section \ref{formula}, we decompose as follows; 
\begin{align*}
u_j(x)&=- \sum_{k=1}^{N-1} \left\{\int_0^\infty  \CF_{\xi'}^{-1}\left[\lambda^{1/2}B^{-2}\pd_N\phi_{j,k}^R(\lambda, \xi', x_N+y_N)\CF_{\xi'} (\lambda^{1/2}h_k)\right](x, y_N) dy_N \right.\\
&\qquad \qquad  - \sum_{m=1}^{N-1}\int_0^\infty \CF_{\xi'}^{-1} \left[i\xi_mB^{-2}\pd_N \phi_{j,k}^R(\lambda, \xi', x_N+y_N)\CF_{x'} (\pd_m h_k)\right](x,y_N) dy_N\\
&\qquad \qquad \left. + \int_0^\infty \CF_{\xi'}^{-1} \left[\phi_{j,k}^R(\lambda, \xi', x_N+y_N)\CF_{x'} (\pd_N h_k)\right](x,y_N) dy_N\right\} \\
&\qquad \qquad  - \left\{\int_0^\infty  \CF_{\xi'}^{-1}\left[(B^{-2}\pd_N\phi_{j,N}^R(\lambda, \xi', x_N+y_N))\CF_{x'} ((\lambda - \Delta')h_N)\right](x, y_N) dy_N \right.\\
&\qquad \qquad  + \int_0^\infty \CF_{\xi'}^{-1} \left[\lambda^{1/2}B^{-2}\phi_{j,N}^R(\lambda, \xi', x_N+y_N)\CF_{x'} (\lambda^{1/2}\pd_N h_N)\right](x,y_N) dy_N\\
& \qquad \qquad \left. - \sum_{m=1}^{N-1} \int_0^\infty \CF_{\xi'}^{-1} \left[i\xi_m B^{-2}\phi_{j,N}^R(\lambda, \xi', x_N+y_N)\CF_{x'} (\pd_m \pd_N h_N)\right](x,y_N) dy_N\right\}\quad (j=1, \ldots, N),\\
\pi(x)&=- \sum_{k=1}^{N-1} \left\{\int_0^\infty  \CF_{\xi'}^{-1}\left[\lambda^{1/2}B^{-2}\pd_N\chi_k^R(\lambda, \xi', x_N+y_N)\CF_{x'} (\lambda^{1/2}h_k)\right](x, y_N) dy_N \right.\\
&\qquad \qquad  - \sum_{m=1}^{N-1}\int_0^\infty \CF_{\xi'}^{-1} \left[i\xi_mB^{-2}\pd_N \chi_k^R(\lambda, \xi', x_N+y_N)\CF_{x'} (\pd_m h_k)\right](x,y_N) dy_N\\
&\qquad \qquad \left. + \int_0^\infty \CF_{\xi'}^{-1} \left[\chi_k^R(\lambda, \xi', x_N+y_N)\CF_{x'} (\pd_N h_k)\right](x,y_N) dy_N\right\} \\
&\qquad \qquad - \left\{\int_0^\infty  \CF_{\xi'}^{-1}\left[(B^{-2}\pd_N\chi_N^R(\lambda, \xi', x_N+y_N))\CF_{x'} ((\lambda - \Delta')h_N)\right](x, y_N) dy_N \right.\\
&\qquad \qquad  + \int_0^\infty \CF_{\xi'}^{-1} \left[\lambda^{1/2}B^{-2}\chi_N^R(\lambda, \xi', x_N+y_N)\CF_{x'} (\lambda^{1/2}\pd_N h_N)\right](x,y_N) dy_N\\
& \qquad \qquad \left. - \sum_{m=1}^{N-1} \int_0^\infty \CF_{\xi'}^{-1} \left[i\xi_m B^{-2}\chi_N^R(\lambda, \xi', x_N+y_N)\CF_{x'} (\pd_m \pd_N h_N)\right](x,y_N) dy_N\right\}. 
\end{align*}

Let $S_u^R(\lambda, \xi', x_N)$ and $S_\pi^R(\lambda, \xi', x_N)$ be any of symbols; 
\begin{align*}
S_u^R(\lambda, \xi', x_N):=\begin{cases} \lambda^{1/2}B^{-2}\pd_N\phi_{j,k}^R(\lambda, \xi', x_N) & \text{or}, \\ i\xi_m B^{-2}\pd_N\phi_{j,k}^R(\lambda, \xi', x_N)&\text{or}, \\ \phi_{j,k}^R(\lambda, \xi', x_N), &\text{or}, \\B^{-2}\pd_N\phi_{j,N}^R(\lambda, \xi', x_N) & \text{or}, \\ \lambda^{1/2}B^{-2}\phi_{j,N}^R(\lambda, \xi', x_N)&\text{or}, \\i\xi_m B^{-2}\phi_{j,N}^R(\lambda, \xi', x_N), &
\end{cases} \\
S_\pi^R(\lambda, \xi', x_N):=\begin{cases} \lambda^{1/2} B^{-2}\pd_N\chi_k^R(\lambda, \xi', x_N)&\text{or}, \\ i\xi_m B^{-2}\pd_N \chi_k^R(\lambda, \xi', x_N) & \text{or}, \\ \chi_k^R(\lambda, \xi', x_N), &\text{or}, \\B^{-2}\pd_N\chi_N^R(\lambda, \xi', x_N) & \text{or}, \\ i\xi_m B^{-2}\chi_N^R(\lambda, \xi', x_N), &\end{cases}
\end{align*}
where $k=1, \ldots, N-1$ and it is different from Neumann boundary. 

We are able to prove that all of the symbols are bounded in the sense that 
\begin{align*}
&\sup_{\substack{(\lambda, \xi')\in \Sigma_\eps\times \tilde{\Sigma}_\eta^{N-1}\\ \ell, \ell'=1, \ldots, N-1}}\left\{(|\lambda|+|\lambda|^{1/2}|\xi_\ell|+|\xi_\ell| |\xi_{\ell'}|)|S_u^R|+(|\lambda|^{1/2}+|\xi_\ell|)|\pd_N S_u^R| + |\pd_N^2 S_u^R| + |\xi_\ell| |S_\pi^R| + |\pd_N S_\pi^R|\right\} \\
&<Cx_N^{-1}. 
\end{align*}

For the excluded term $\lambda^{1/2}B^{-2}\chi_N^R(\lambda, \xi', x_N)$, we see that 
\begin{align*}
&\int_0^\infty \CF_{\xi'}^{-1} \left[\lambda^{1/2}B^{-2}\chi_N^R(\lambda, \xi', x_N+y_N)\CF_{x'} (\lambda^{1/2}\pd_N h_N)\right](x,y_N) dy_N\\
=&\int_0^\infty \CF_{\xi'}^{-1} \left[A B^{-2}\chi_N^R(\lambda, \xi', x_N+y_N)\CF_{x'} (\lambda |\nabla'|^{-1}\pd_N h_N)\right](x,y_N) dy_N
\end{align*}
and 
\begin{align*}
\sup_{\substack{(\lambda, \xi')\in \Sigma_\eps\times \tilde{\Sigma}_\eta^{N-1}\\ \ell=1, \ldots, N-1}}\left\{|\xi_\ell| |A B^{-2}\chi_N^R(\lambda, \xi', x_N)| + |\pd_N (A B^{-2}\chi_N^R(\lambda, \xi', x_N))|\right\} <Cx_N^{-1}. 
\end{align*}

Since the new symbols are holomorphic in $(\tau, \xi')\in\tilde{\Sigma}_\eta^N$, we apply theorem \ref{thm} for Robin boundary condition. 

\begin{thm}
Let $0<\eps<\pi/2$ and $1<q<\infty$. 
Then for any $\lambda \in \Sigma_\eps, h'\in W^1_q(\HS)$ and $h_N\in E_q(\HS)$, problem \eqref{resolvent Stokes} with Robin condition and $f=g=0$ admits a solution $(u, \pi) \in W^2_q(\HS) \times \hW^1_q(\HS)$ with the resolvent estimate; 
\begin{align*}
\|(|\lambda|u, |\lambda|^{1/2}\nabla u, \nabla^2 u, \nabla \pi)\|_{L_q(\HS)} 
\le
C\|(|\lambda|^{1/2}h', \nabla h', |\lambda|h_N, \nabla^2 h_N, |\lambda||\nabla'|^{-1}\pd_N h_N) \|_{L_q(\HS)}
\end{align*}
for some constant $C=C_{N, q, \eps, \alpha, \beta}$ depending only on $N, q, \eps,\alpha$ and $\beta$. 
\end{thm}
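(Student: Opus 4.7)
The plan is to mirror the Dirichlet/Neumann subsections: starting from the six-family decomposition of $u$ and the five-family decomposition of $\pi$ that is already displayed just above the theorem, I would verify (a) uniform holomorphy of each symbol in $(\tau,\xi')\in\tilde{\Sigma}_\eta^N$ for a suitable $\eta\in(0,\min\{\pi/4,\eps/2\})$, and (b) the supremum bound
\begin{align*}
&\sup_{\substack{(\lambda,\xi')\in\Sigma_\eps\times\tilde{\Sigma}_\eta^{N-1}\\ \ell,\ell'=1,\ldots,N-1}}\bigl\{(|\lambda|+|\lambda|^{1/2}|\xi_\ell|+|\xi_\ell||\xi_{\ell'}|)|S_u^R|+(|\lambda|^{1/2}+|\xi_\ell|)|\pd_N S_u^R|\\
&\qquad +|\pd_N^2 S_u^R|+|\xi_\ell||S_\pi^R|+|\pd_N S_\pi^R|\bigr\}\le Cx_N^{-1}.
\end{align*}
Once these hold, Theorem \ref{thm}(i) applied family by family gives the $L_q$-bound for each of $|\lambda|u$, $|\lambda|^{1/2}\nabla u$, $\nabla^2 u$, $|\xi_\ell|\pi$ and $\pd_N\pi$ in terms of the listed independent variables $|\lambda|^{1/2}h'_k$, $\pd_m h'_k$, $\pd_N h'_k$, $(\lambda-\Delta')h_N$, $|\lambda|^{1/2}\pd_N h_N$, $\pd_m\pd_N h_N$, which sum to the norm on the right-hand side of the theorem.

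The main new ingredient beyond Lemmas \ref{es} and \ref{es2} is a lower bound for the Robin denominators $\alpha+\beta B$ and $\alpha+\beta(A+B)$ on the sector. Using Lemma \ref{es}(a)(b) together with $\alpha\ge 0$ and $\beta>0$, I would estimate
\[\Re(\alpha+\beta B)\ge\beta c(|\lambda|^{1/2}+\tA),\qquad \Re(\alpha+\beta(A+B))\ge\beta c(|\lambda|^{1/2}+\tA),\]
so that $|\alpha+\beta B|^{-1}$ and $|\alpha+\beta(A+B)|^{-1}$ are each dominated by $C_{\alpha,\beta}(|\lambda|^{1/2}+\tA)^{-1}$. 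Combining this with Lemma \ref{es}(c)--(g) and the identities for $\pd_N^m\CM_\lambda$ already recorded in the Dirichlet subsection, each $S_u^R$ and $S_\pi^R$ admits a bound of the form $C(|\lambda|^{1/2}+\tA)^{-s}x_N^{-1}$ whose exponent $s$ precisely absorbs the prefactors $|\lambda|$, $|\lambda|^{1/2}|\xi_\ell|$, $|\xi_\ell\xi_{\ell'}|$, etc. Holomorphy follows from the fact that $A$, $B$, $\alpha+\beta B$, and $\alpha+\beta(A+B)$ all have strictly positive real part on $\tilde{\Sigma}_\eta^N$ (choosing $\eta$ small enough), and that $\CM_\lambda=\int_0^1 x_N\,e^{-(A+s(B-A))x_N}\,ds$ inherits analyticity from the exponential.

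The exceptional term $\lambda^{1/2}B^{-2}\chi_N^R$ fails the bound because $\chi_N^R$ contains a factor $A^{-1}$ which provides no decay as $\tA\to 0$. I would handle it exactly as in the Dirichlet case: since on $\mathbb{R}^{N-1}$ one has $A\cdot\CF_{x'}(|\nabla'|^{-1}\pd_N h_N)=\CF_{x'}(\pd_N h_N)$, the contribution can be rewritten
\[\int_0^\infty\CF_{\xi'}^{-1}\bigl[\lambda^{1/2}B^{-2}\chi_N^R\CF_{x'}(\lambda^{1/2}\pd_N h_N)\bigr]dy_N=\int_0^\infty\CF_{\xi'}^{-1}\bigl[AB^{-2}\chi_N^R\CF_{x'}(\lambda|\nabla'|^{-1}\pd_N h_N)\bigr]dy_N,\]
and the new symbol $AB^{-2}\chi_N^R$ \emph{does} satisfy the required pointwise bound, while the input $\lambda|\nabla'|^{-1}\pd_N h_N$ is exactly the $E_q(\HS)$-norm quantity appearing on the right-hand side of the theorem statement.

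The hardest part is really the bookkeeping: there are many symbols, each assembled from products of $e^{-Ax_N}$, $e^{-Bx_N}$, $\CM_\lambda$, rational factors in $A$, $B$, $\xi_j$, and the Robin denominators, and for each I must identify precisely which factors are being absorbed by the prefactors $|\lambda|$, $|\xi_\ell|$, $\pd_N$. In each case the guiding principle is: factors of $A$ and $B$ in the numerator are absorbed by $(|\lambda|^{1/2}+\tA)$ via Lemma \ref{es}(a)(b); exponentials $e^{-Ax_N}$, $e^{-Bx_N}$ give an additional $(|\lambda|^{1/2}+\tA)^{-1}x_N^{-1}$; $\CM_\lambda$ contributes $(|\lambda|^{1/2}+\tA)^{-2}x_N^{-1}$; and the Robin denominators donate $(|\lambda|^{1/2}+\tA)^{-1}$ each. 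The $B^{-2}$ factor introduced by the Volevich-trick rewrites $1=(\lambda-\Delta')/(\lambda-\Delta')$ then provides the final matching decay. With the symbol bounds in hand, an appeal to Theorem \ref{thm}(i) finishes the proof; existence of $(u,\pi)\in W^2_q(\HS)\times\hW^1_q(\HS)$ follows directly from the explicit Fourier-multiplier formulas.
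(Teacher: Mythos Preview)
Your proposal is correct and follows essentially the same route as the paper: verify the supremum bound \eqref{symbol} for the Robin symbol families $S_u^R$, $S_\pi^R$ via Lemma~\ref{es} together with the lower bounds $|\alpha+\beta B|,\ |\alpha+\beta(A+B)|\ge c_{\alpha,\beta}(|\lambda|^{1/2}+\tA)$, treat the exceptional term $\lambda^{1/2}B^{-2}\chi_N^R$ by the $A|\nabla'|^{-1}$ rewriting, and then invoke Theorem~\ref{thm}(i). The only refinement worth noting is that several Robin symbols carry an isolated $A^{-1}$ factor (bounded below by $c\tA$, not $c(|\lambda|^{1/2}+\tA)$), so the bounds are not literally of the homogeneous form $C(|\lambda|^{1/2}+\tA)^{-s}x_N^{-1}$; rather one must pair each $A^{-1}$ with an accompanying $\xi_j$ or with the $\tA$-decay of $e^{-Ax_N}$ or $\CM_\lambda$, exactly as in the Dirichlet analysis.
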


The uniqueness is proved in \cite{SS07}. 

For the non-stationary Stokes equations we have 
\begin{thm}
Let $1<p, q<\infty$ and $\gamma_0\ge0$. 
Then for any 
\begin{align*}
&H'\in H^{1/2}_{p,0,\gamma_0}(\BR, L_q(\HS)) \cap L_{p,0,\gamma_0}(\BR, W^1_q(\HS)), \\
&H_N\in W^1_{p,0,\gamma_0}(\BR, L_q(\HS)) \cap L_{p,0,\gamma_0}(\BR, W^2_q(\HS)) \cap H^1_{p,0,\gamma_0}(\BR, E_q(\HS)), 
\end{align*}
problem \eqref{non-stationary Stokes} with Robin boundary condition, $F=G=0$ and time interval $\BR$ admits a solution $(U, \Pi)$ such that 
\begin{align*}
U&\in W^1_{p,0,\gamma_0}(\BR, L_q(\HS)) \cap L_{p,0,\gamma_0}(\BR, W^2_q(\HS)), \\
\Pi &\in L_{p,0,\gamma_0}(\BR, \hW^1_q(\HS))
\end{align*} 
with the maximal $L_p$-$L_q$ regularity; 
\begin{align*}
&\|e^{-\gamma t}(\pd_t U, \gamma U, \Lambda^{1/2}_\gamma \nabla U, \nabla^2 U, \nabla \Pi)\|_{L_p(\BR, L_q(\HS))}\\
\le&
C\|e^{-\gamma t} (\Lambda^{1/2}_\gamma H', \nabla H', \pd_t H_N, \nabla^2 H_N, \pd_t(|\nabla'|^{-1}\pd_N H_N) \|_{L_p(\BR, L_q(\HS))}
\end{align*}
for any $\gamma \ge \gamma_0$ with some constant $C=C_{N, p, q, \alpha, \beta, \gamma_0}$ depending only on $N, p, q, \alpha, \beta$ and $\gamma_0$. 
\end{thm}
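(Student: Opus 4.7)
The plan is to apply Theorem~\ref{thm}(ii) to the Robin solution formulas written down in Section~\ref{formula} and decomposed at the start of subsection~5.4. After a two-sided Laplace transform in time, the non-stationary problem with $F=G=0$ reduces to the resolvent problem with $\lambda=\gamma+i\tau$, and each component of $U$ and $\Pi$ becomes a finite sum of operators of the form $\tilde{T}_\gamma[m_\lambda]$ where $m_\lambda$ is one of the symbols $S_u^R$ or $S_\pi^R$, acting on one of the data $\Lambda_\gamma^{1/2}H'$, $\nabla H'$, $\pd_N H'$, $(\lambda-\Delta')H_N$, $\lambda^{1/2}\pd_N H_N$, or $\pd_m\pd_N H_N$. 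The single exceptional integrand $\lambda^{1/2}B^{-2}\chi_N^R\cdot \lambda^{1/2}\pd_N H_N$ is rewritten as $AB^{-2}\chi_N^R\cdot \lambda|\nabla'|^{-1}\pd_N H_N$, producing exactly the last term $\pd_t(|\nabla'|^{-1}\pd_N H_N)$ on the right-hand side of the desired estimate.

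Next, for each symbol multiplied by the relevant differential operator ($\lambda$, $\lambda^{1/2}\xi_\ell$, $\xi_\ell\xi_{\ell'}$, $\lambda^{1/2}\pd_N$, $\xi_\ell\pd_N$, $\pd_N^2$, $\xi_\ell$, $\pd_N$), I verify conditions (c)--(d) of Theorem~\ref{thm}(ii). The pointwise bound $\leq C x_N^{-1}$ was already indicated in subsection~5.4 via Lemma~\ref{es} together with a Robin-specific lower bound of the form $|\alpha+\beta(A+B)|\geq c(\alpha+\beta(|\lambda|^{1/2}+\tilde A))$ on $\Sigma_\eps\times\tilde\Sigma_\eta^{N-1}$ for sufficiently small $\eta$. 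Joint holomorphy in $(\tau,\xi')\in\tilde\Sigma_\eta^{N}$ is immediate from the algebraic form of the symbols once one notes that $B=\sqrt{\lambda+A^2}$ extends holomorphically to this domain, since $\Re(\lambda+A^2)>0$ there.

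With (c)--(d) in hand, Theorem~\ref{thm}(ii) applied termwise delivers
\[
\|e^{-\gamma t}(\pd_t U,\Lambda_\gamma^{1/2}\nabla U,\nabla^2 U,\nabla\Pi)\|_{L_p(\BR,L_q(\HS))}\le C\|e^{-\gamma t}(\Lambda_\gamma^{1/2}H',\nabla H',\pd_t H_N,\nabla^2 H_N,\pd_t(|\nabla'|^{-1}\pd_N H_N))\|_{L_p(\BR,L_q(\HS))}
\]
for every $\gamma\geq\gamma_0$. The $\gamma U$ estimate follows from the symbol-level inequality $|\gamma|\leq|\lambda|$, which turns the multiplier $\gamma\cdot S_u^R$ into one dominated by $\lambda\cdot S_u^R$. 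The causal vanishing $U=\nabla\Pi=0$ on $t<0$ comes from a Paley--Wiener argument based on the holomorphic extension of the Laplace-side symbols to $\Re\lambda>\gamma_0$, exactly as in the Dirichlet and Neumann cases treated above and in \cite{KS12, SS11}. Uniqueness was already proved in \cite{SS07}.

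The principal obstacle I anticipate is the uniform complex lower bound for the Robin denominator $\alpha+\beta(A+B)$ on $\Sigma_\eps\times\tilde\Sigma_\eta^{N-1}$ (the Robin analogue of Lemma~\ref{es2}); once this is proved by tracking $\Re A,\Re B\geq 0$ for sufficiently small $\eta$, all remaining symbol estimates for $S_u^R$ and $S_\pi^R$ reduce via Lemma~\ref{es} to purely algebraic manipulations, but tracking the correct scaling in $\alpha$ and $\beta$ across every variant of the symbol requires careful bookkeeping, and it is the boundary regime $\alpha>0$, $\gamma_0=0$ where uniformity of the kernel bound in $\lambda\to 0$ breaks down and forces the $\gamma_0>0$ restriction in the parent Theorem~\ref{maxregthm}.
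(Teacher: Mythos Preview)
Your proposal is correct and follows essentially the same route as the paper: apply Theorem~\ref{thm}(ii) termwise to the decomposed Robin symbols $S_u^R$, $S_\pi^R$ of subsection~5.4, use Lemma~\ref{es} together with the elementary lower bound $\Re(\alpha+\beta(A+B))\ge c(\alpha+\beta(|\lambda|^{1/2}+\tA))$ for the kernel estimates, rewrite the exceptional $\lambda^{1/2}B^{-2}\chi_N^R$ term exactly as you describe, and handle $\gamma U$ and the vanishing for $t<0$ as in the Dirichlet case. Two small corrections that do not affect the argument: the holomorphy of $B$ in $(\tau,\xi')$ does not follow from $\Re(\lambda+A^2)>0$ (this can fail for $\lambda\in\Sigma_\eps$) but from $\lambda+A^2\in\Sigma_\eps$, as shown in the proof of Lemma~\ref{es}; and the $\gamma_0>0$ restriction for Robin with $\alpha>0$ in Theorem~\ref{maxregthm} originates in the reduction step of Section~\ref{reduction} (the $\alpha/|\lambda|^{1/2}$ factor there), not in any breakdown of the kernel bounds for the present boundary-data problem, which indeed hold uniformly for all $\lambda\in\Sigma_\eps$.
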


\appendix
\def\thesection{Appendix \Alph{section}}
\section{Proof of the estimate for normal component}\label{appX}
\begin{proof}
We see that 
\begin{align*}
|\lambda||\nabla'|^{-1} \pd_N v_N = \sum_{k=1}^{N-1}\CF_{\xi}^{-1} \left( |\lambda| \frac{i\xi_N}{|\xi'|}\frac{1}{\lambda+|\xi|^2}(\frac{-\xi_N\xi_k}{|\xi|^2})\right) \CF_x f_k^o + \CF_{\xi}^{-1}\left( |\lambda| \frac{i\xi_N}{|\xi'|}\frac{1}{\lambda+|\xi|^2}( 1- \frac{\xi_N^2}{|\xi|^2})\right)\CF_x f_N^e. 
\end{align*}
All symbols 
\begin{align*}
|\lambda|\frac{i\xi_N}{|\xi'|}\frac{1}{\lambda+|\xi|^2}\frac{-\xi_N\xi_k}{|\xi|^2}, \quad |\lambda|\frac{i\xi_N}{|\xi'|}\frac{1}{\lambda+|\xi|^2}(1-\frac{\xi_N^2}{|\xi|^2})=|\lambda|\frac{i\xi_N}{|\xi'|}\frac{1}{\lambda+|\xi|^2}\frac{|\xi'|^2}{|\xi|^2}
\end{align*}
are bounded and holomorphic in $\lambda\in \Sigma_\eps$, $\xi\in \tilde\Sigma_\eta^N$ for small $\eps, \eta$, where we regard $|\xi'|=\sqrt{\sum_{j=1}^{N-1} \xi_j^2}=A$ and $|\xi|^2=A^2+\xi_N^2$ as complex functions. 
Therefore, by theorem \ref{thm}, we have 
\begin{align*}
\||\lambda||\nabla'|^{-1} \pd_N v_N\|_{L_q(\HS)}\le \||\lambda||\nabla'|^{-1} \pd_N v_N\|_{L_q(\BR^N)} \le C(\sum_{k=1}^{N-1}\|f_k^o\|_{L_q(\BR^N)} + \|f_N^e\|_{L_q(\BR^N)})
\le C'\|f\|_{L_q(\HS)}. 
\end{align*}
The other estimate follows similarly. 
\end{proof}

\section{Proof of Lemma \ref{es}.}\label{appA}
\begin{proof}
Let $\lambda = re^{i\theta}(\in \Sigma_\eps)$ and $\xi'=(\xi_1, \ldots, \xi_{N-1})=(\alpha_1 e^{i\beta_1}, \ldots, \alpha_{N-1}e^{i\beta_{N-1}})(\in \tilde{\Sigma}_\eta^{N-1})$ . 
We see 
\begin{align*}
|\lambda+\xi_1^2|^2 &= r^2 + \alpha_1^4 + 2r \alpha_1^2\cos(\theta-2\beta_1)\\
&\ge r^2 + \alpha_1^4 -2r\alpha_1^2 \cos(\eps-2\eta)\\
&= (r-\alpha_1^2)^2 \cos(\eps-2\eta) + (1-\cos(\eps-\eta))(r^2+\alpha_1^4)\\
&\ge 2\sin^2\frac{\eps-2\eta}{2}(r^2+\alpha_1^4)\\
&\ge \sin^2\frac{\eps-2\eta}{2}(r+\alpha_1^2)^2. 
\end{align*}
Therefore $|\lambda+\xi_1^2|\ge \sin((\eps-2\eta)/2)(|\lambda|+|\xi_1|^2)$. 
Since $\lambda+\xi_1^2\in\Sigma_\eps$, we have  
\begin{align*}
|\lambda+\xi_1^2+ \xi_2^2 |\ge \sin\frac{\eps-2\eta}{2}(|\lambda+\xi_1^2|+|\xi_2|^2) \ge \sin^2\frac{\eps-2\eta}{2}(|\lambda|+|\xi_1|^2+|\xi_2|^2). 
\end{align*}
Inductively, $|\lambda+A^2| \ge \sin^{N-1}((\eps-2\eta)/2)(|\lambda|+\tA^2)$. 
Let $\tilde{\theta}=\arg(\lambda+A^2)$. 
Then $|\tilde{\theta}|<\pi-\eps$ and $\cos(\tilde{\theta}/2)> \sin(\eps/2)$, so 
\begin{align*}
\Re B = |\lambda+A^2|^{1/2}\cos\frac{\tilde{\theta}}{2} \ge \sin\frac{\eps}{2}(|\lambda|+\tA^2). 
\end{align*}
This proves (b) since others are obvious. 
Similar proof holds for $\lambda=0$, which means (a). 
Inequalities (a) and (b) derive inequalities (c) and (d) easily. 
Inequality (e) is same as \cite{SS12} thanks to (a) and (b). 
For (f), we see that 
\begin{align*}
|\CM_\lambda(\xi', x_N)| &\le C |\lambda|^{-1/2} e^{-c \tA x_N/2} e^{-|\lambda|^{1/2}x_N/2} \\
&\le C |\lambda|^{-1}x_N^{-1} e^{-c \tA x_N/2}, 
\end{align*}
where the first inequality is in \cite{SS12}. 
For (g), we derive by combining all results from (a) to (f) except for (d).  
\end{proof}

\section{Proof of Lemma \ref{es2}.}\label{appB}
\begin{proof} We notice that $D(A,B)\neq0$ for $(\lambda, \xi')\in\Sigma_\eps \times \tilde\Sigma_\eta^{N-1}$ is enough to prove the theorem since lower bound is same as in \cite{SS03}. 
Let $B/A=\sqrt{\lambda A^{-2}+1}=a+bi$ with $a, b\in\BR$, then $a>1$ from the assumption. 
We see 
\begin{align*}
f(A,B):=\frac{D(A,B)}{A^3} = (a^3+a^2-3a(b^2-1)-b^2-1) + i(3a^2+2a-b^2+3)b. 
\end{align*}
For $b=0$, then $\Im f=0$, but $\Re f = a^3+a^2+3a-1\neq0$ for $a>1$. 
For $b^2=3a^2+2a+3$, then $\Im f=0$, but $\Re f = -8a^3-8a^2-8a-4 \neq 0$ for $a>1$. 
This means $D(A,B)\neq 0$. 
\end{proof}

\subsection*{Acknowledgements} The authors thank Professor Hirokazu Saito for some remarks on this paper. 
The research was supported by JSPS KAKENHI Grant No.\,19K23408.

\end{document}